\newtheorem{theorem}{Theorem}
\newtheorem{thm}{Theorem}[section]
\newtheorem{proposition}[thm]{Proposition}
\newtheorem*{proposition*}{Proposition}
\newtheorem{corollary}[thm]{Corollary}
\newtheorem{conjecture}{Conjecture}
\newtheorem{lemma}[thm]{Lemma}
\theoremstyle{definition}
\newtheorem{definition}{Definition}[section]
\newtheorem{remark}{Remark}[section]
\newtheorem{example}{Example}
\newtheorem*{example*}{Example}
\newcommand{%

\import{./slike/}{.pdf_tex}
}[1]{%

\import{./slike/}{#1.pdf_tex}
}
\def\zn{,\kern-0.09em,}
\begin{document}

\begin{center}
    \textbf{\large Riemannian distance and symplectic embeddings in cotangent bundle}
\end{center}

\begin{center}
    Filip Broćić\\
    \textit{Department of Mathematics and Statistics,}\\
    \textit{University of Montr\'eal, H3T 1J4 Montr\'eal, Canada}\\
    \textit{filip.brocic@umontreal.ca}
\end{center}

\begin{abstract}	
 Given an open neighborhood $\mathcal{W}$ of the zero section in the cotangent bundle of $N$ we define a distance-like function $\rho_\mathcal{W}$ on $N$  using certain symplectic embeddings from the standard ball $B^{2n}(r)$ to $\mathcal{W}$. We show that when $\mathcal{W}$ is the unit disc-cotangent bundle of a Riemannian metric on $N$, $\rho_\mathcal{W}$ recovers the metric. As an intermediate step, we give a new construction of a symplectic embedding of the ball of capacity 4 to the product of Lagrangian discs $P_L := B^n(1)\times B^n(1)$, and we give a new proof of the strong Viterbo conjecture about normalized capacities for $P_L$. We also give bounds of the symplectic packing number of two balls in a unit disc-cotangent bundle relative to the zero section $N$.
\end{abstract}

\section{Introduction}

Fix a symplectic manifold $(M,\omega)$ and two Lagrangian submanifolds  $L_1$ and $L_2$. Assume that $L_1$ and $L_2$ intersect transversely at a single point  $p \in L_1 \cap L_2$. One way to estimate quantitatively how $L_1$ and $L_2$ intersect at $p$ is to consider the supremum of $\pi r^2$ over all symplectic embeddings $e:B^{2n}(r) \to M$ of a standard ball, with $e(0)=p$, such that $e$ maps the real part of $B^{2n}(r)$ to $L_1$ and the imaginary part to $L_2$. Denote this number by $Gr(L_1,L_2; M)$. This invariant is a version of the Gromov width, relative to Lagrangians, and has appeared before in \cite{Le07, BCS21}.  Here and in the rest of the paper, real and imaginary parts are with respect to the identification $\mathbb{R}^{2n}=\mathbb{C}^n=\mathbb{R}^n \times \mathbb{R}^n$. 

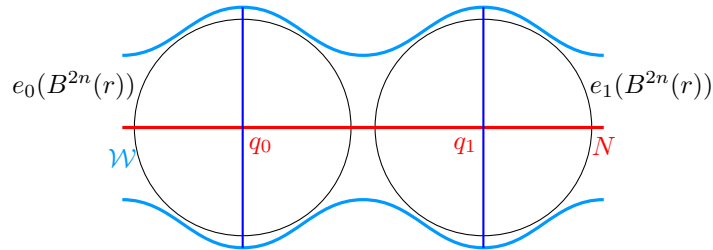
\begin{figure}[h!]
 \centering
   \begin{tikzpicture}[scale=0.80]
\draw[color=blue!40!cyan, very thick] (-4,1.2)  to [out=0, in=180] (-2, 2) to [out=0, in=180] (0,1.2) to [out=0, in=180] (2, 2) to [out=0, in=180] (4,1.2);
\draw[color=blue!40!cyan, very thick] (-4,-1.2)  to [out=0, in=180] (-2, -2) to [out=0, in=180] (0,-1.2) to [out=0, in=180] (2, -2) to [out=0, in=180] (4,-1.2);
\draw (-2,0) circle (1.8);
\draw (2,0) circle (1.8);
\node[red] at (-1.7,-0.3) {$q_0$};
\node[red] at (1.7,-0.3) {$q_1$};
\node at (-4.8,0.7) {$e_0(B^{2n} (r))$};
\node at (4.8,0.7) {$e_1(B^{2n} (r))$};
\node[red] at (4,-0.3) {$N$};
\node[color=blue!40!cyan] at (-4,-0.5) {$\mathcal{W}$};
\draw[red, very thick] (-4,0) -- (4,0);
\draw[blue, thick] (-2,-2) -- (-2,2);
\draw[blue, thick] (2,-2) -- (2,2);
\end{tikzpicture}
    \caption{Symplectic embeddings with disjoint images, centered at $q_0$ and $q_1$, with real parts mapped to zero section $N$ and imaginary parts mapped to fibers $T^*_{q_i}N$. }
    \label{Functionrho}
\end{figure}
 
 One example of such a pair is when $L_1 = N$ is the zero section of $T^*N$ and $L_2 = T^*_q N$ is a fiber over a point $q\in N$. In this case, the invariant $Gr(N,T^*_q N; T^*N)$ is infinite. Given an open and bounded neighborhood $\mathcal{W}\subset T^*N$ of the zero section $N$, and $\mathcal{W}_q:=T^*_qN \cap \mathcal{W}$, then $Gr(N,\mathcal{W}_q; \mathcal{W})$ becomes finite with a trivial upper bound coming from the volume obstruction. 
 	
	Using embeddings with constraints on the real and imaginary part, one can measure how two points $q_0 \neq q_1 \in N$ are separated inside $\mathcal{W}$, in a symplectic sense. To this aim we introduce a quantity $\rho_{\mathcal{W}}(q_0, q_1)$ as the supremum of $\pi r^2/2$ over symplectic embeddings $e_i:B^{2n}(r) \to \mathcal{W}$ of two standard balls, with disjoint images, centered at $q_0$ and  $q_1$ respectively (see Figure \ref{Functionrho}). We extend the function $\rho_{\mathcal{W}}$ to $N\times N$ by assigning the value $0$ when $q_0=q_1$. See Definition \ref{MainDefinition} for a precise description of $\rho_{\mathcal{W}}$. 
 
 Following an idea from metric geometry (see \cite{BBI01}), we define a length of a curve $\gamma:[a,b] \rightarrow N$   associated with the function $\rho_\mathcal{W}$.

\begin{definition}
For any piece-wise smooth curve $\gamma:[a,b] \rightarrow N$ we define it's length with respect to $\rho_{\mathcal{W}}$ as
$$
L_{\rho_{\mathcal{W}}}(\gamma) = \sup_{\mathcal{P}} \sum_{1\leq i \leq k}\rho_\mathcal{W}(\gamma(t_i) , \gamma(t_{i+1}) ),
$$
where $\mathcal{P}$ is a partition of the segment $[a,b]$ given by $a=t_1 < t_2 < \cdots < t_{k+1} = b$.
\end{definition}

The main result of the paper relates the length structure $L_{\rho_\mathcal{W}}$ to the standard length structure coming from a Riemannian metric $g$ in the case when the neighborhood $\mathcal{W}\subset T^*N$ is the unit-disc bundle $D_g^*N:= \{p \in T^*N \mid \|p\|_g < 1 \}.$We will write $D^*N$ when $g$ is clear from the context.

\begin{theorem}\label{Main}
Let $(N,g)$ be a closed Riemannian manifold. If $\mathcal{W}=D^*N$ then
$$L_{\rho_{\mathcal{W}}}(\gamma) = \int_a^b \|\gamma'(t)\|_g dt.$$
\end{theorem}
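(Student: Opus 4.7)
My plan is to establish Theorem~\ref{Main} by proving a pointwise comparison between $\rho_\mathcal{W}$ and the Riemannian distance $d_g$ on $N$, and then applying the standard length-structure formalism (as in \cite{BBI01}). The two pointwise estimates I aim for are (a) a global upper bound $\rho_\mathcal{W}(q_0,q_1)\le d_g(q_0,q_1)$, and (b) an infinitesimal lower bound: for every $\varepsilon>0$ there is $\delta>0$ such that $d_g(q_0,q_1)<\delta$ implies $\rho_\mathcal{W}(q_0,q_1)\ge (1-\varepsilon)\,d_g(q_0,q_1)$.

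Granted (a) and (b), the theorem follows by a routine argument. For the inequality $L_{\rho_\mathcal{W}}(\gamma)\le \int_a^b\|\gamma'\|_g\,dt$, every partition sum is majorized by $\sum d_g(\gamma(t_i),\gamma(t_{i+1}))$, which in turn is bounded above by the Riemannian length of $\gamma$. For the reverse inequality $L_{\rho_\mathcal{W}}(\gamma)\ge \int_a^b\|\gamma'\|_g\,dt$, the piecewise smoothness of $\gamma$ makes $\|\gamma'\|_g$ uniformly continuous, so for any $\varepsilon > 0$ all sufficiently fine partitions satisfy $d_g(\gamma(t_i),\gamma(t_{i+1}))<\delta$; combining with (b) then yields $\sum \rho_\mathcal{W}(\gamma(t_i),\gamma(t_{i+1})) \ge (1-\varepsilon)\sum d_g(\gamma(t_i),\gamma(t_{i+1}))$, and the right-hand side approaches $(1-\varepsilon)\int_a^b\|\gamma'\|_g\,dt$ as the partition is refined. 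Letting $\varepsilon\to 0$ gives the desired inequality.

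For (b), I would work in Darboux--Weinstein coordinates around the minimizing geodesic joining $q_0$ to $q_1$, which is unique once $d=d_g(q_0,q_1)$ lies below the injectivity radius. The open geodesic balls of radius $d/2-\varepsilon$ around $q_0$ and $q_1$ are disjoint, and for $\delta$ small enough each extends to a symplectic chart of the form $\Phi_i\colon B^n(d/2-\varepsilon)\times B^n(1-\delta)\hookrightarrow D^*N$ sending the first factor onto a neighborhood of $q_i$ in the zero section and the second into the cotangent fibers. Into each of these product charts I would plant, after the symplectic dilation $(q,p)\mapsto(\lambda q,p/\lambda)$ that identifies $B^n(a)\times B^n(b)$ with a rescaled copy of $P_L$, the symplectic ball of capacity $4(d/2-\varepsilon)(1-\delta)$ produced by the new construction in $P_L$ promised in the abstract; arranging this ball so that its center is $q_i$, its real part lies on the zero section and its imaginary part on $T^*_{q_i}N$, one obtains two automatically disjoint admissible balls giving $\pi r^2/2 = (d/2-\varepsilon)(1-\delta)\cdot 2$, hence $\rho_\mathcal{W}(q_0,q_1)\ge d-o(1)$ as $\varepsilon,\delta\to 0$.

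The main obstacle is the upper bound (a). Here one must produce a symplectic obstruction to the existence of two disjoint admissible balls of radius $r$ as soon as $\pi r^2/2>d_g(q_0,q_1)$. A natural route is to compress a tubular neighborhood of a shortest geodesic from $q_0$ to $q_1$ into a product model whose base factor has diameter $d_g(q_0,q_1)$, and then apply a Gromov-nonsqueezing or Hofer--Zehnder capacity bound in this model, using the Lagrangian boundary conditions on the balls in an essential way. A Floer-theoretic argument exploiting Lagrangian intersections of the zero section with the fibers $T^*_{q_i}N$ is an alternative avenue. I expect the bulk of the work to be concentrated in (a), since (b) is fundamentally a packing construction whereas (a) is a packing obstruction.
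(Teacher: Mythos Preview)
Your overall architecture---reduce to the two pointwise bounds (a) $\rho_\mathcal{W}\le d_g$ and (b) $\rho_\mathcal{W}\ge (1-\varepsilon)d_g$ for nearby points, then run the partition--squeeze argument---is exactly the paper's, and your construction for (b) is essentially the paper's combination of Lemma~\ref{Sharpness for bi-disc} (the explicit ball into $P_L(a,b)$) with Lemma~\ref{Local distance} (the exponential-map chart $\psi(q,p)=(\exp_{q_i}q,(d\exp_{q_i})^{-*}p)$, which is the precise way to get your $\Phi_i$ with the right fiber radius).

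The gap is in your first proposed route to (a). You cannot ``compress a tubular neighborhood of a shortest geodesic'' and then invoke a capacity bound, because the embedded balls $e_i(B^{2n}(r))$ are under no obligation to lie in such a tube: only their centers and the real/imaginary slices are constrained, while the bulk of each ball may sit anywhere in $D^*N$. So there is no ambient embedding of the balls into a product model of base-diameter $d_g(q_0,q_1)$, and a global capacity of $D^*N$ sees only $\mathrm{diam}(N)$, not $d_g(q_0,q_1)$. The paper's proof of (a) (Proposition~\ref{MainProp}) goes via your second suggestion, but in a rather specific form: it uses the isomorphism between wrapped Floer homology of the fibers and Morse homology of the path space to produce, for each $\epsilon>0$, a perturbed $J$-holomorphic half-strip $u$ with boundary on $T^*_{q_0}N$, $T^*_{q_1}N$ and $N$, which is genuinely $J$-holomorphic on $D^*N$ and has energy $d_g(q_0,q_1)+\epsilon$. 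Choosing $J$ to be $(e_i)_*J_0$ on the images of the balls, the pieces of $u$ inside each ball can be Schwarz-reflected twice (across both Lagrangian slices) to proper minimal surfaces through the origin, and the monotonicity formula gives $\pi r_0^2+\pi r_1^2\le 4E(u)$. The point is that the $J$-holomorphic curve supplies the link between the two balls that a local capacity argument cannot.
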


The proof of the theorem is given in Section \ref{ProofMain}. The idea of the proof goes as follows. Set $L_g (\gamma) : = \int_a^b \|\gamma'(t)\|_g dt$. The inequality  $L_{\rho_{\mathcal{W}}} \geq L_g$ follows from the elementary construction of a symplectic embedding $e:B^{2n}(r) \to D^*N$ with the following properties. The image of the embedding $e$ is contained in the restriction of $D^*N$ to the normal neighborhood around $q\in N$,  of radius $d$. It turns out that the capacity of the ball $B^{2n}(r)$ can be very close to $d$, for $d$ small enough. We prove that the existence of such an embedding $e$ implies that $\rho_\mathcal{W}(q_0, q_1) \geq d_g(q_0, q_1) - C(g) d_g^2(q_0, q_1)$, for $d_g(q_0, q_1)$ small enough and some constant $C(g)>0$. The construction of $e$ relies on the fact that the derivative of the exponential map $exp:T_qN \to N$ is ``close" to the identity $Id:T_qN \to T_qN$ near $0 \in T_qN$. These observations are the content of Lemmas \ref{Sharpness for bi-disc} and \ref{Local distance}. The other inequality, $L_{\rho_{\mathcal{W}}} \leq L_g$ is not elementary, and it follows from the Proposition \ref{MainProp}, where we show that $\rho_\mathcal{W}(q_0, q_1) \leq d_g(q_0, q_1)$, when $\mathcal{W}=D^*N$.  We prove this proposition in Section \ref{MainPropProof}. In the proof, we use a Floer-type theory (\cite{Fl88}), called wrapped Floer homology. It was introduced by Abbondandolo and Schwarz in \cite{ASch05} for fibers in $T^*N$ and extended by Abouzaid and Seidel in \cite{AS10} to more general Lagrangians inside any Liouville manifold $M$. Here, we use the isomorphisms from \cite{ASch05, Ab12} between the wrapped Floer homology $HW(T^*_{q_0}N, T^*_{q_1}N)$ of two fibers in $T^*N$ and the Morse homology $HM(\mathcal{P}(q_0,q_1))$ of the space of paths on $N$ between $q_0$ and $q_1$. From the properties of these isomorphisms, we extract a $J$-holomorphic curve $u$ with special boundary conditions. The energy $E(u)$ of such $u$ is bounded from above by $d_g(q_0,q_1)$. The inequality $\rho_\mathcal{W}(q_0, q_1) \leq d_g(q_0,q_1)$ follows from the monotonicity theorem for minimal surfaces. 

\begin{remark}
\begin{itemize}
    \item[(i)] The functional $L_{\rho_{\mathcal{W}}}$ is defined using only the symplectic structure. Thus, our result shows that the symplectic structure can recover the Riemannian length, and hence the Riemannian distance. 
    \item[(ii)] Even if the boundary of the neighborhood $\mathcal{W}$ is smooth, the associated length structure doesn't need to be smooth. We give examples of this phenomenon in Figure \ref{NbhdExmp}. One example is of a neighborhood $\mathcal{W}$ of $S^1=\mathbb{R} / \mathbb{Z}$ with smooth boundary, such that associated length structure $L_{\rho_{\mathcal{W}}}$ is not continuous (with respect to $C^1$ topology on the space of paths). The second example is a neighborhood $\mathcal{W}$, which is a unit-disc bundle of a smooth Finsler metric on $S^1$, but the associated length structure fails to be differentiable. One could construct similar examples in higher dimensions.
    
    \item[(iii)] The function $\rho_\mathcal{W}$ is not a metric in general since it may not satisfy the triangle inequality. One can associate to $\rho_{\mathcal{W}}$ a pseudo-metric $$D_\mathcal{W}(q_0, q_1):= \inf \left\{ \sum_{1\leq i \leq k} \rho_\mathcal{W}(x_i, x_{i+1}) \mid x_i \in N, x_1=q_0, x_{k+1}=q_1\right\}.$$
    From Proposition \ref{Equivalence of metrics}, it follows that $D_\mathcal{W}$ is non-degenerate, hence a metric. For a fixed Riemannian metric $g$ on $N$, $D_\mathcal{W}$ is equivalent to the Riemannian distance $d_g$ associated with $g$. Proposition \ref{Equivalence of metrics} remains true when one replaces $\rho_\mathcal{W}$ with $D_\mathcal{W}$. From the proof of the Theorem \ref{Main}, one can see that length functional associated with the metric $D_\mathcal{W}$ is equal to $L_{\rho_\mathcal{W}}$.

\end{itemize}

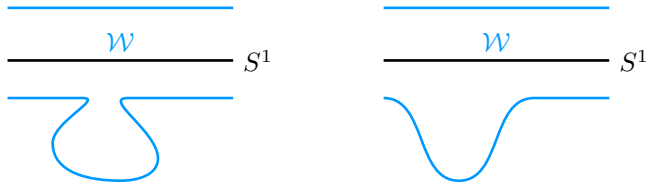
\begin{figure}[ht]
    \centering
    \begin{tikzpicture}
    \begin{scope}
      \node[blue!40!cyan] at (1.5,0) [above] {$\mathcal{W}$};
      \draw[line width=1pt] (0,0) -- (3,0) node [right] {$S^{1}$};
      \draw[blue!40!cyan,line width=1pt] (0,0.7)--(3,0.7) (0,-0.5) to[out=0,in=180] (1,-1.6) to[out=0,in=180] (2,-0.5) to[out=0,in=180] (3,-0.5);
    \end{scope}
    \begin{scope}[shift={(-5,0)}]
      \node[blue!40!cyan] at (1.5,0) [above] {$\mathcal{W}$};
      \draw[line width=1pt] (0,0) -- (3,0) node [right] {$S^{1}$};
      \draw[blue!40!cyan,line width=1pt] (0,0.7)--(3,0.7) (0,-0.5) -- (1,-0.5) to[out=0,in=90] (0.6,-1.1) to[out=-90,in=180] (1.5,-1.6) to[out=0,in=-90](2,-1.3) to[out=90,in=180] (1.6,-0.5)--(3,-0.5);
    \end{scope}
  \end{tikzpicture}
    \caption{On the left: a neighborhood $\mathcal{W}$ with the smooth boundary, but $L_{\rho_\mathcal{W}}$ is not $C^0$. On the right: a  neighborhood $\mathcal{W}$, which is the unit-disc bundle of a smooth Finsler metric, but $L_{\rho_\mathcal{W}}$ is not $C^1$.}
    \label{NbhdExmp}
\end{figure}
\end{remark}

\subsection{Other results}

 Given a Lagrangian submanifold $L$ of $M$, we say that a symplectic embedding $e:B^{2n}(r)\rightarrow M$ is \textit{relative} to $L$ if  the preimage  $e^{-1}(L) = B^n(r) \times \{0\}$ is equal to the real part of the ball $B^{2n}(r)$. The main ingredient for the proof of the Theorem \ref{Main} appears in the next proposition. It gives a bound on the radii of two symplectically embedded standard balls relative to the zero section $N$ in a unit disc-cotangent bundle $D^*N$. We also assume that the embedding $e:B^{2n}(r_0) \sqcup B^{2n}(r_1) \to D^*N$ satisfies the following constraint on the imaginary parts
\begin{equation}\label{ImPartCond}
     e^{-1}(D^*_{q_0}N \sqcup D^*_{q_1} N) = \{0\} \times B^{n}(r_0) \sqcup \{0\} \times B^{n}(r_1).
\end{equation}

\begin{proposition}\label{MainProp}
Let $(N,g)$ be a closed, connected, Riemannian manifold. If $e:B^{2n}(r_0) \sqcup B^{2n}(r_1) \to D^*N$ is a symplectic embedding relative to $N$ which satisfies the condition (\ref{ImPartCond}) then $\pi r_0^2 + \pi r_1^2 \leq 4 d_g(q_0, q_1).$
\end{proposition}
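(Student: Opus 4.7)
The plan is to construct an $\omega$-compatible pseudo-holomorphic curve $u$ whose boundary lies on the two cotangent fibers, which passes through the marked points $q_0, q_1$ on the zero section, and whose symplectic area is controlled by $d_g(q_0, q_1)$, and then to compare this area with the relative monotonicity contribution at each of the two embedded balls. For the Floer-theoretic input I would complete $D^*N$ inside $T^*N$ equipped with a Liouville-admissible Hamiltonian and consider the wrapped Floer complex $CW^*(T^*_{q_0}N,\, T^*_{q_1}N)$. For a Hamiltonian that is fiberwise quadratic in $\|p\|_g$, the chord generators are in bijection with geodesics from $q_0$ to $q_1$, and the Abbondandolo--Schwarz isomorphism \cite{ASch05}, extended by Abouzaid \cite{Ab12}, identifies this complex with the Morse complex of the energy functional on the path space $\mathcal{P}(q_0, q_1)$ in a way compatible with action/length filtrations. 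Since $\mathcal{P}(q_0, q_1)$ is non-empty and connected, there is a non-zero homology class whose Morse representative is a length-minimising geodesic; its spectral value on the Floer side is at most $d_g(q_0, q_1)$, up to a fixed normalisation constant depending on the Hamiltonian.

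Next I would extract a specific $J$-holomorphic curve. Choose a compatible almost complex structure $J$ on $T^*N$ that pulls back to the standard $J_0$ under each embedding, i.e. $e^* J = J_0$ on $B^{2n}(r_i)$ for $i = 0, 1$; this is possible because $e$ maps $\lR^n$ and $i\lR^n$ into the totally real submanifolds $N$ and $T^*_{q_i}N$, which are easy to keep $J$-invariant off the balls. Consider then a PSS-type moduli space with two interior marked points constrained to map to $q_0, q_1 \in N$, and with cylindrical ends modelled on a Floer strip from $T^*_{q_0}N$ to $T^*_{q_1}N$ asymptotic to the chord singled out in the previous step. The non-vanishing of that class, transported through the PSS comparison, produces at least one $J$-holomorphic map $u$ whose domain carries boundary components labelled by $T^*_{q_0}N$ and $T^*_{q_1}N$, which passes through both $q_0$ and $q_1$, and whose symplectic area satisfies $E(u) \le 2\, d_g(q_0, q_1)$; the factor $2$ reflects the action-versus-length normalisation that appears in the identification of the two filtrations.

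Finally I would apply the monotonicity inequality on each embedded ball. For each $i$, the pull-back $e^{-1}(u)$ is a proper $J_0$-holomorphic curve in $B^{2n}(r_i)$ passing through the origin with boundary on the totally real subspace $i\lR^n$, so the standard monotonicity inequality for minimal surfaces with totally real boundary gives
\[
\mathrm{Area}\bigl(u \cap e(B^{2n}(r_i))\bigr) \;\geq\; \tfrac{\pi r_i^2}{2}.
\]
Adding the two contributions and using that the images are disjoint yields $\tfrac{\pi r_0^2}{2} + \tfrac{\pi r_1^2}{2} \leq E(u) \leq 2\, d_g(q_0, q_1)$, that is, $\pi r_0^2 + \pi r_1^2 \leq 4\, d_g(q_0, q_1)$. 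The hard part, and the real content of the proof, is the second step: tracking the Abbondandolo--Schwarz--Abouzaid isomorphism precisely enough to guarantee that the PSS moduli space is non-empty after imposing the incidence conditions at $q_0$ and $q_1$, ensuring the correct Lagrangian boundary conditions, handling the compactness and transversality issues of the wrapped Floer setup, and securing the \textbf{sharp} constant in the energy bound. The final monotonicity step, by comparison, is local and essentially routine.
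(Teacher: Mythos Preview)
Your overall strategy---extract a pseudo-holomorphic curve from the Abbondandolo--Schwarz / Abouzaid isomorphism between $HW(T^*_{q_0}N,T^*_{q_1}N)$ and the Morse homology of the path space, choose $J$ to be $(e_i)_*J_0$ on the embedded balls, and finish with monotonicity and Schwarz reflection---is exactly the paper's strategy. But two of your concrete choices diverge from what the paper does, and one of them is a genuine gap.

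First, the curve the paper actually uses is not a strip with two interior marked points but a \emph{half-strip} $u:(-\infty,0]\times[0,1]\to T^*N$ with boundary on the three Lagrangians $T^*_{q_0}N$, $T^*_{q_1}N$ and the zero section $N$; the points $q_0,q_1$ arise as the two \emph{corners} $u(0,0),u(0,1)$, not as interior incidence conditions. This is precisely the moduli space $\mathcal{M}^{\mathcal F}$ defining Abouzaid's map $\mathcal F:CW\to CM$, and its non-emptiness follows directly from $\Theta\circ\mathcal F=\mathrm{Id}$ together with the fact that the minimal geodesic represents a nonzero class (after a small perturbation guaranteeing $q_0,q_1$ are joined by a unique minimiser). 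Because the boundary near each $q_i$ lies on \emph{both} $\mathbb R^n$ and $i\mathbb R^n$, one needs two Schwarz reflections, so the local bound is $E(\bar v_i)\geq \pi r_i^2/4$, and the sharp energy is $E(u)=d_g(q_0,q_1)+\epsilon$, not $2d_g(q_0,q_1)$. Your factors of $1/2$ and $2$ happen to multiply to the right answer, but neither is justified by your setup.

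Second, and more seriously: with a fiberwise quadratic Hamiltonian the Floer solution satisfies $\partial_s u+J(\partial_t u-X_H)=0$ with $X_H\neq 0$ on $D^*N\setminus N$, so $e_i^{-1}\circ u$ is \emph{not} $J_0$-holomorphic and the monotonicity lemma does not apply. The paper handles this by replacing $H$ with a Hamiltonian $H_\epsilon=f_\epsilon(\|p\|_g)$ that is \emph{constant} on $D^*N$ (so $X_{H_\epsilon}\equiv 0$ there) and agrees with $C\|p\|_g^2$ outside $\{\|p\|\leq 1+\delta\}$; a continuation argument shows the relevant moduli space for $H_\epsilon$ is still non-empty, and now the curve is genuinely holomorphic on $D^*N$. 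This deformation, together with the precise choice of $f_\epsilon$ making the action of the distinguished chord equal to $d_g(q_0,q_1)/2$, is the technical heart of the argument and is absent from your sketch.
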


  As a corollary, we get a bound on the symplectic packing numbers for two balls relative to the zero section $N \subset D^*N$. Also, we get a bound on the capacity of one ball, symplectically embedded to $D^*N$ in the complement of a fiber $D^*_qN$, relative to the zero section $N$.

\begin{corollary} \label{RelGr}  $\widetilde{Gr}^2(N, D^*N) \leq 4 diam(N).$
\end{corollary}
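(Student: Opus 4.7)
The plan is to deduce the corollary as an immediate consequence of Proposition \ref{MainProp} together with the trivial bound $d_g(q_0,q_1)\leq \mathrm{diam}(N)$ for all $q_0,q_1\in N$, which is available because $(N,g)$ is a closed Riemannian manifold and hence has finite Riemannian diameter. Interpreting $\widetilde{Gr}^2(N,D^*N)$ as the supremum of $\pi r_0^2+\pi r_1^2$ over all pairs of points $q_0,q_1\in N$ and all symplectic embeddings $e\colon B^{2n}(r_0)\sqcup B^{2n}(r_1)\to D^*N$ that are relative to the zero section $N$ and satisfy the fiber constraint (\ref{ImPartCond}), each admissible embedding produces, by Proposition \ref{MainProp},
\[
\pi r_0^2 + \pi r_1^2 \;\leq\; 4\, d_g(q_0,q_1) \;\leq\; 4\,\mathrm{diam}(N).
\]
Taking the supremum of the left-hand side over all admissible embeddings and all admissible pairs $(q_0,q_1)$ then yields $\widetilde{Gr}^2(N,D^*N)\leq 4\,\mathrm{diam}(N)$, which is the claimed inequality.

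I do not foresee any real obstacle: once Proposition \ref{MainProp} is in hand, the corollary is a one-line consequence, and the only substantive ingredient is the elementary fact that the Riemannian distance function on a closed manifold is uniformly bounded above by the diameter. The sole bookkeeping remark is to verify that the two-ball packing number $\widetilde{Gr}^2(N,D^*N)$ is defined so as to incorporate both the relative-to-$N$ condition on the real parts and the fiber condition (\ref{ImPartCond}) on the imaginary parts, since these are precisely the hypotheses under which Proposition \ref{MainProp} applies; this is a definitional point rather than a mathematical difficulty.
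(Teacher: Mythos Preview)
Your proof is correct and matches the paper's intended argument: the paper states Corollary~\ref{RelGr} without proof, treating it as an immediate consequence of Proposition~\ref{MainProp} together with the trivial bound $d_g(q_0,q_1)\leq\mathrm{diam}(N)$. One small definitional remark: the paper's $\widetilde{Gr}^k(N;D^*N)$ is defined (in Section~\ref{RelGrW}) as the supremum of $k\pi r^2$ over embeddings of $k$ balls of \emph{equal} radius $r$, not of $\sum_i \pi r_i^2$ with varying radii; since the equal-radius case is a special case of what you bound, your argument covers it a fortiori, so this does not affect the validity of your proof.
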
 

\begin{corollary} \label{RelGr2}  $\widetilde{Gr}(N\setminus \{q\}, D^*(N\setminus \{q\})) \leq 4 diam(N).$
\end{corollary}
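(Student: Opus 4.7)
The plan is to deduce the bound from the two-ball estimate of Proposition \ref{MainProp} by inserting a second, vanishingly small ball near the fiber that has been removed. Fix any symplectic embedding $e_0 : B^{2n}(r) \to D^*(N\setminus\{q\})$ relative to $N\setminus\{q\}$ (and satisfying the imaginary-part condition (\ref{ImPartCond}) at its center $q_0 = e_0(0)$, as is presumably built into the tilded Gromov width $\widetilde{Gr}$ used in Corollary \ref{RelGr}). The image $K := e_0(B^{2n}(r))$ is a compact subset of $D^*N$ that misses the closed fiber $D^*_qN$ entirely, so with respect to any auxiliary background metric on $D^*N$ we have
$$\delta := \operatorname{dist}(K, D^*_qN) > 0.$$

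Next, I would use Weinstein's Lagrangian neighborhood theorem to produce, for every sufficiently small $\epsilon > 0$, a symplectic embedding $e_1 : B^{2n}(\epsilon) \to D^*N$ relative to $N$, centered at $q$, that obeys the imaginary part condition $e_1^{-1}(D^*_qN) = \{0\}\times B^n(\epsilon)$ and whose image is contained in the $\delta/2$-neighborhood of $q$ in $D^*N$. Such an $e_1$ is easy to build in a Darboux chart around $q$ in which $N$ and $D^*_qN$ appear as the two standard Lagrangian planes $\mathbb{R}^n\times\{0\}$ and $\{0\}\times\mathbb{R}^n$. By the choice of $\delta$ the image of $e_1$ is disjoint from $K$, so the concatenation
$$e_0 \sqcup e_1 : B^{2n}(r) \sqcup B^{2n}(\epsilon) \longrightarrow D^*N$$
is a symplectic embedding relative to $N$ satisfying (\ref{ImPartCond}) at the two centers $q_0$ and $q$.

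Applying Proposition \ref{MainProp} directly to $e_0 \sqcup e_1$ yields
$$\pi r^2 + \pi \epsilon^2 \; \le \; 4\, d_g(q_0, q) \; \le \; 4\, \operatorname{diam}(N).$$
Sending $\epsilon \to 0$ removes the auxiliary ball, after which taking the supremum over admissible $e_0$ gives $\widetilde{Gr}(N\setminus\{q\}, D^*(N\setminus\{q\})) \le 4\operatorname{diam}(N)$.

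The only real obstacle is the bookkeeping around the imaginary-part condition: Proposition \ref{MainProp} requires (\ref{ImPartCond}) at \emph{both} centers, so the argument depends on the convention that $\widetilde{Gr}$ itself is computed over embeddings satisfying (\ref{ImPartCond}) at the center, which is the same convention needed to make Corollary \ref{RelGr} an immediate consequence of Proposition \ref{MainProp}. If one wants to work with relative embeddings without this additional condition, an extra Weinstein-type modification of $e_0$ in an arbitrarily small neighborhood of $0 \in B^{2n}(r)$, of cost an arbitrarily small radius loss, would be needed before invoking Proposition \ref{MainProp}; this is a standard maneuver and does not affect the final inequality after letting the radius loss tend to zero.
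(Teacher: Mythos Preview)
Your argument is correct and is precisely the deduction the paper leaves implicit: insert a vanishingly small second ball at $q$ and invoke Proposition~\ref{MainProp}. Two small remarks. First, with the paper's convention $B^{2n}(r)$ is the \emph{open} ball, so $K=e_0(B^{2n}(r))$ is not compact; just restrict $e_0$ to $B^{2n}(r')$ for $r'<r$, run your argument, and let $r'\uparrow r$ at the very end. Second, your caveat about the imaginary-part condition is unnecessary: the definition of $\widetilde{Gr}^k(N;D^*N)$ in Section~\ref{RelGrW} already builds in the constraint $e^{-1}(D^*_{q_i}N)=\{0\}\times B^n_i(r)$, so the hypothesis of Proposition~\ref{MainProp} is met without any extra modification of $e_0$.
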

$\widetilde{Gr}(N, D^*N)$ is a version of Gromov width relative to Lagrangians. It is similar to the invariant $Gr(L_1, L_2; M)$ from the previous section. A precise definition appears in Section \ref{RelGrW}. When $N$ is a sphere of revolution the full Gromov width of $D^*N$ was calculated in \cite{FRV23}. Their result provides a better upper bound for  $\widetilde{Gr}(N; D^*N)$ than Corollary \ref{RelGr2} for such $N$.
 
Another corollary of Proposition \ref{MainProp} is a bound on the relative Gromov width of the product of Lagrangian discs. Set $D^n_q(a):= \{(q,0) \in  \mathbb{R}^n \times \mathbb{R}^n \mid \|q\| < a\},$  and $D^n_p (b) := \{(0,p) \in \mathbb{R}^n \times \mathbb{R}^n \mid \|p\|< b \}.$ We will consider the product of such discs as a symplectic manifold and denote it by
$P^{2n}_L(a,b) := D^n_q(a) \times D^n_p(b).$
In \cite{Ra17} Ramos showed that  $Gr(P^4_L(1,1))=4.$ Using the notation from \cite{Ra17}, we call $P^{2n}_L(a,b)$ a Lagrangian bi-disc. One can think of $P^{2n}_L(a,b)$ as a local model for $D^*N$.
\begin{corollary}\label{Capacity of a Lagrangian bi-disc}
The Gromov width of a bi-disc $P^{2n}_L(a,b)$ relative to $D^n_q(a)$ and $D^n_p(b)$ satisfies
$$
Gr(D^n_q(a), D^n_p(b); P^{2n}_L(a,b)) = 4ab.
$$
\end{corollary}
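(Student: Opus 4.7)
The proof matches upper and lower bounds. For the upper bound I reduce to Proposition \ref{MainProp} via a doubling trick in a flat torus cotangent bundle; for the lower bound I rescale the ball of capacity $4$ in $P^{2n}_L(1,1)$ announced in the abstract.

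\textbf{Upper bound.} Let $e: B^{2n}(r) \hookrightarrow P^{2n}_L(a,b)$ be a symplectic embedding with $e(0)=0$ whose real and imaginary parts map into $D^n_q(a)$ and $D^n_p(b)$ respectively, with preimage equalities matching (\ref{ImPartCond}). Equip $N = \mathbb{R}^n/(4a\mathbb{Z})^n$ with the rescaled flat metric $g_{ij} = b^2\delta_{ij}$, so that $D^*_g N \cong T^n \times B^n(b)$. The quotient on the first factor induces a symplectic embedding $\iota: P^{2n}_L(a,b) \hookrightarrow D^*_g N$ that sends $D^n_q(a)$ into the zero section and $D^n_p(b)$ onto the fiber $D^*_0 N$. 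Set $q_0 = 0$ and $q_1 = (2a,0,\ldots,0) \in T^n$, and let $\tau(q,p) = (q + q_1, p)$ be the symplectic translation. Then $\tilde e_0 := \iota \circ e$ and $\tilde e_1 := \tau \circ \iota \circ e$ are symplectic embeddings of $B^{2n}(r)$ into $D^*_g N$ whose base projections lie in the disjoint open subsets $(-a,a)^n$ and $(a,3a) \times (-a,a)^{n-1}$ of $T^n$. Since each projection has first-coordinate width $2a < 4a$, each image meets only one of the fibers $D^*_{q_0}N$, $D^*_{q_1}N$, making condition (\ref{ImPartCond}) automatic from the preimage conditions on $e$. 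With $d_g(q_0,q_1) = 2ab$ in the rescaled metric, Proposition \ref{MainProp} yields $2\pi r^2 \leq 4 \cdot 2ab = 8ab$, hence $\pi r^2 \leq 4ab$.

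\textbf{Lower bound.} The paper's own construction of a ball of capacity $4$ in $P^{2n}_L(1,1)$ is relative to the Lagrangian factors; it supplies, for each $\varepsilon > 0$, a symplectic embedding $f_\varepsilon: B^{2n}(r_\varepsilon) \hookrightarrow P^{2n}_L(1,1)$ with $\pi r_\varepsilon^2 > 4 - \varepsilon$, real part in $D^n_q(1)$, and imaginary part in $D^n_p(1)$. With $c = \sqrt{ab}$, the rescaling $\tilde f_\varepsilon(x) := c\, f_\varepsilon(x/c)$ is a symplectic embedding of $B^{2n}(cr_\varepsilon)$ into $P^{2n}_L(c,c)$ whose capacity $\pi(cr_\varepsilon)^2$ tends to $4ab$, with the relative property preserved. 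Postcomposing with the linear symplectomorphism $\phi(q,p) = (\sqrt{a/b}\,q,\,\sqrt{b/a}\,p)$ from $P^{2n}_L(c,c)$ to $P^{2n}_L(a,b)$, which takes $D^n_q(c) \to D^n_q(a)$ and $D^n_p(c) \to D^n_p(b)$, yields a family of relative symplectic embeddings into $P^{2n}_L(a,b)$ with capacity arbitrarily close to $4ab$.

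The main subtlety is the choice of torus period $4a$: it must strictly exceed the $q$-diameter $2a$ of the bi-disc so that the doubling disjoins the two images and so that each image meets only one distinguished fiber (making (\ref{ImPartCond}) automatic), but it must not be so large as to make $d_g(q_0, q_1)$ grow past $2ab$, which would weaken the bound. The choice $4a$ hits the optimal balance and lets Proposition \ref{MainProp} give the sharp upper bound.
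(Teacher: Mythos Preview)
Your proof is correct and follows essentially the same strategy as the paper: embed $P_L(a,b)$ into the unit disc bundle of a flat torus and invoke Proposition \ref{MainProp} for the upper bound, then cite the explicit ball construction (Lemma \ref{Sharpness for bi-disc} in the paper) for the lower bound. There is one bookkeeping difference worth recording. The paper first passes to $P_L(ab,1)$ via the obvious symplectomorphism, embeds into $D^*T^n$ with $T^n=\mathbb{R}^n/((2ab+1)\mathbb{Z})^n$ carrying the standard flat metric, and applies Proposition \ref{MainProp} with a \emph{single} genuine ball and a second point $q_1$ chosen at distance $ab+\epsilon$ from $q_0$ (so effectively $r_1=0$), letting $\epsilon\to 0$ at the end. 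You instead keep $P_L(a,b)$ as is, put the rescaled metric $b^2\delta_{ij}$ on $\mathbb{R}^n/(4a\mathbb{Z})^n$, and use a \emph{doubling}: two identical copies of the embedded ball, translated by $2a$ in one coordinate, so that Proposition \ref{MainProp} yields $2\pi r^2\le 4\cdot 2ab$ directly without a limiting argument. Your rescaling for the lower bound is also correct, though in fact unnecessary once you have access to Lemma \ref{Sharpness for bi-disc}, which already produces the capacity-$4ab$ ball in $P_L(a,b)$ for arbitrary $a,b$.
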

\begin{proof}
It is easy to see that $P_L(a,b)$ is symplectomorphic to $P_L(ab,1)$. Let $T^n:= \mathbb{R}^n / ((2ab+1) \mathbb{Z})^n$ be a flat torus. There is an obvious symplectic embedding $i:P_L(ab,1) \to D^*T^n$. Fix a relative symplectic embedding $e:B^{2n}(r) \to P_L,$ and set $q_0:=e(0).$ For every $\epsilon>0$ there exists a point $q_1 \in T^n$ such that $d(q_0,q_1)\leq ab+\epsilon $.
From the Proposition \ref{MainProp} we know that $\pi r^2 \leq 4d(q_0,q_1) \leq 4ab+4\epsilon.$ Since $\epsilon>0$ was arbitrary we get
$$
Gr(D^n_q(a), D^n_p(b); P^{2n}_L(a,b))\leq 4ab.
$$
We construct an explicit relative symplectic embedding from a ball of capacity $4ab$ to $P_L(a,b)$ in the Lemma \ref{Sharpness for bi-disc}.
\end{proof}
The previous result is a symplectic phenomenon, the bound coming from the volume obstruction is much weaker. Using Proposition \ref{MainProp} and Corollary \ref{Capacity of a Lagrangian bi-disc}, we are able to show that the function $\rho_\mathcal{W}$ is squeezed in between $C_1 d_g$ and $C_2 d_g$, where $d_g$ is a Riemannian distance, and $C_i>0, i\in \{1,2\}$ are positive constants. 
\begin{proposition}\label{Equivalence of metrics}
There exist constants $C_i(\mathcal{W},g)>0$ such that for any $q_0, q_1 \in N$ we have $$C_1 d_g(q_0, q_1) \leq \rho_{\mathcal{W}}(q_0, q_1) \leq C_2 d_g(q_0, q_1).$$
\end{proposition}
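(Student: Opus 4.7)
The strategy is to sandwich $\mathcal{W}$ between two rescaled unit disc bundles of $g$ and then apply Proposition~\ref{MainProp} on the outside for the upper bound and Corollary~\ref{Capacity of a Lagrangian bi-disc} on the inside for the lower bound. Since $N$ is compact and $\mathcal{W}$ is a bounded open neighborhood of $N$, there exist constants $\alpha,\beta>0$ with
\[
\alpha\, D_g^* N \;\subset\; \mathcal{W} \;\subset\; \beta\, D_g^* N.
\]
Note that $\beta\, D_g^* N$ is precisely the unit disc bundle $D^*_{\beta^2 g} N$ of the rescaled metric $\beta^2 g$, whose distance function satisfies $d_{\beta^2 g} = \beta\, d_g$. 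Any pair of embeddings $e_i\colon B^{2n}(r) \to \mathcal{W}$ witnessing $\rho_{\mathcal{W}}(q_0,q_1)$ also lands in $D^*_{\beta^2 g} N$ and still satisfies the real-part condition together with~(\ref{ImPartCond}); applying Proposition~\ref{MainProp} with the metric $\beta^2 g$ yields $2\pi r^2 \leq 4\, d_{\beta^2 g}(q_0,q_1) = 4\beta\, d_g(q_0,q_1)$, and taking the supremum gives the right-hand inequality with $C_2 = \beta$.

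For the lower bound I build explicit embeddings via the Lagrangian bi-disc model in normal charts. By compactness of $N$ and continuity of the inverse metric matrix at the origin of a normal chart, there exist $\delta_0 > 0$ smaller than the injectivity radius of $g$ and $\gamma \in (0,\alpha]$ such that, for every $q\in N$, the normal chart $\exp_q\colon B^n(\delta_0)\to B_g(q,\delta_0)$ lifts to a symplectic identification
\[
\Phi_q\colon \bigl(B^n(\delta_0)\times\mathbb{R}^n,\, \omega_{\mathrm{std}}\bigr) \longrightarrow T^*N\big|_{B_g(q,\delta_0)}
\]
sending $B^n(\delta_0)\times\{0\}$ into the zero section, $\{0\}\times\mathbb{R}^n$ onto $T^*_q N$, and the bi-disc $P_L^{2n}(\delta_0,\gamma)$ into $\alpha\, D_g^* N\subset\mathcal{W}$. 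This uniform containment step is the main technical obstacle: it amounts to bounding how much $g^{ij}(x)$ deviates from $\delta^{ij}$ on a normal chart of radius $\delta_0$, uniformly in $q\in N$, which is routine but must be done carefully.

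Given distinct $q_0,q_1\in N$, set $d := d_g(q_0,q_1)$ and $\delta := \min(d/3,\delta_0)$. For every $\varepsilon>0$, Corollary~\ref{Capacity of a Lagrangian bi-disc} furnishes a relative symplectic embedding $f\colon B^{2n}(r)\to P_L^{2n}(\delta,\gamma)$ centered at the origin, satisfying~(\ref{ImPartCond}) inside $P_L^{2n}(\delta,\gamma)$, with $\pi r^2 > 4\delta\gamma - \varepsilon$. Composing $f$ with $\Phi_{q_0}$ and $\Phi_{q_1}$ produces embeddings $e_i\colon B^{2n}(r) \to \mathcal{W}$ centered at $q_i$, whose real parts map into $N$ and imaginary parts onto $T^*_{q_i}N$. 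Their images project to the geodesic balls $B_g(q_i,\delta)$, which are disjoint by the triangle inequality (if some $x$ lay in both, then $d\leq 2\delta \leq 2d/3 < d$), so the images themselves are disjoint. Letting $\varepsilon\to 0$ gives $\rho_{\mathcal{W}}(q_0,q_1)\geq 2\gamma\delta$, and since
\[
\delta \;=\; \min(d/3,\delta_0) \;\geq\; \min\!\left(\tfrac{1}{3},\, \tfrac{\delta_0}{\mathrm{diam}_g(N)}\right) d,
\]
this yields the left-hand inequality with $C_1 = 2\gamma\,\min\!\left(\tfrac{1}{3}, \tfrac{\delta_0}{\mathrm{diam}_g(N)}\right)$.
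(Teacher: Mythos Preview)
Your argument is correct and follows essentially the same route as the paper: sandwich $\mathcal{W}$ between two rescaled disc bundles, invoke the rescaled Proposition~\ref{MainProp} for the upper bound, and for the lower bound compose the explicit bi-disc embedding (Lemma~\ref{Sharpness for bi-disc}) with the lift of a normal chart, handling large versus small $d_g(q_0,q_1)$ via $\min(d/3,\delta_0)$ rather than the paper's case split at $\rho_{\mathrm{inj}}$. The only cosmetic differences are in the constants (the paper takes bi-disc radius $d_g(q_0,q_1)/2$ and gets $C_1 = A\,r_{\min}\,\rho_{\mathrm{inj}}/\mathrm{diam}(L)$) and that the embedding you need comes from Lemma~\ref{Sharpness for bi-disc} rather than Corollary~\ref{Capacity of a Lagrangian bi-disc}.
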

We prove this proposition in Section \ref{ProofMain}. A construction of a symplectic embedding $e:B(2\sqrt{ab/\pi}) \to P_L^{2n}(a,b)$ appears in earlier work, \cite[Proposition 3.1]{Ka21}. The embedding from \cite{Ka21} is not explicit, and it doesn't necceseraly preserve real and imaginary parts. In Lemma \ref{Sharpness for bi-disc} we construct an explicit embedding that preserves the form $\lambda_{st} = \sum p_i dq_i$, and hence maps all vertical fibers of the ball to the vertical fibers of $P_L^{2n}(a,b)$, and preserves the real part, which is essential for the proof of Theorem \ref{Main}.  We will see in Proposition \ref{StrongViterboProp} that the existence of a symplectic embedding $e:B(2\sqrt{ab/\pi}) \to P_L^{2n}(a,b)$ implies the strong Viterbo conjecture about normalized capacities for $P^{2n}_L(a,b)$. Recall that a normalized symplectic capacity is a map $c: \mathcal{P}(R^{2n}) \to [0, +\infty]$ with the following properties:
\begin{itemize}
\item (\textit{Conformality}) $c(aX)=a^2c(X)$,
\item (\textit{Monotonicity}) If there is a symplectic embedding $\psi:X_1 \to X_2$ than  $c(X_1) \leq c(X_2)$,
\item (\textit{Normalization})  $c(B^{2n}(1)) = c(B^2(1) \times \mathbb{R}^{2n-2}) = \pi$.
\end{itemize}
 Examples of normalized capacities are the Gromov width: 
 $$Gr(X):=\sup \{ \pi r^2 \mid e:B^{2n}(r) \to X, e^* \omega_{st} = \omega_{st} \},$$ 
 and the \textit{cylindrical} capacity:
 $$c_Z(X):= \inf \{ \pi r^2 \mid e:X \to Z:= B^2(r) \times R^{2n-2}, e^* \omega_{st} = \omega_{st} \}.$$ 
 Construction of some other examples of normalized capacities appears in \cite{HZ90, EH89, Vi99, GH18}. The monotonicity and the normalization axiom imply that any other normalized capacity $c$ satisfies $Gr(X) \leq c(X) \leq c_Z(X),$ for any $X\subset \mathbb{R}^{2n}$. The strong Viterbo conjecture states that:
 \begin{conjecture}[Strong Viterbo conjecture]
 All normalized capacities of a convex domain $X\subset \mathbb{R}^{2n}$ are equal.
 \end{conjecture}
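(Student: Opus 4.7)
The statement is the strong Viterbo conjecture in its full generality, which remains an open problem, so I would not attempt a general proof. My plan instead is to reduce the conjecture to a sandwich problem. Monotonicity together with the normalization axiom already forces every normalized capacity $c$ to satisfy $Gr(X) \leq c(X) \leq c_Z(X)$ for any open $X \subset \mathbb{R}^{2n}$. Thus the conjecture is equivalent to the equality $Gr(X) = c_Z(X)$ for every convex $X$, and the whole task splits into producing matching lower and upper bounds tied to the same geometric quantity (for smooth strictly convex bodies, the action $T_{\min}(X)$ of the shortest closed characteristic on $\partial X$, which by Hofer--Zehnder agrees with $c_{HZ}(X)$).

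On the lower side I would try to build a symplectic embedding of a standard ball $B^{2n}(r)$ into $X$ with $\pi r^2 = T_{\min}(X)$. For the families actually covered by this paper, this is realized by explicit constructions: Lemma \ref{Sharpness for bi-disc} gives an explicit ball embedding into $P^{2n}_L(a,b)$ of capacity $4ab$, so combined with Corollary \ref{Capacity of a Lagrangian bi-disc} one gets $Gr(P^{2n}_L(a,b)) \geq 4ab$. On the upper side I would try to symplectically squeeze $X$ into a cylinder $B^2(r) \times \mathbb{R}^{2n-2}$ of the same capacity; for the bi-disc $P^{2n}_L(a,b) \simeq P^{2n}_L(ab,1)$ this can be done directly, since projecting onto a single $(q_i,p_i)$ plane gives a rectangle of area $4ab$ inside a product factor, and a standard area-preserving map sends the rectangle into a disc of area $4ab$.

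The main obstacle in the general convex case is the upper bound: for an arbitrary smooth convex body there is no known recipe to symplectically compress $X$ into a cylinder of capacity $c_{HZ}(X)$. This is precisely why the conjecture is established only in special classes (toric domains, $S^1$-invariant and monotone toric domains, and bi-discs). I would therefore focus first on the bi-disc case $P^{2n}_L(a,b)$, where the two estimates above pinch every normalized capacity at $4ab$, yielding the conjecture for $P^{2n}_L(a,b)$, and defer the general convex case as genuinely open.
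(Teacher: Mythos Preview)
You correctly recognize that the statement is an open conjecture and that the paper does not (and cannot) prove it in full generality; the paper only establishes the special case $X=P_L^{2n}(a,b)$ in Proposition~\ref{StrongViterboProp}. Your sketch for that special case---the lower bound via the explicit ball embedding of Lemma~\ref{Sharpness for bi-disc} and the upper bound via projection onto a single $(q_i,p_i)$ plane landing in a square of area $4ab$ symplectomorphic to a disc---is exactly the argument the paper gives there.
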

 In fact, this conjecture implies the conjecture by Viterbo from \cite{Vi00}: 
 \begin{conjecture}
 If $X\subset \mathbb{R}^{2n}$ is convex, and $c$ is normalized symplectic capacity then $c(X) \leq (n! Vol(X))^{1/n}.$
 \end{conjecture}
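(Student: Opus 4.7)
The plan is to deduce this classical Viterbo inequality from the strong Viterbo conjecture stated immediately above, combined with the elementary volume obstruction to symplectic ball-packing. Fix a convex domain $X \subset \mathbb{R}^{2n}$ and an arbitrary normalized symplectic capacity $c$.

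The first step is to verify that the Gromov width $Gr$ is itself a normalized symplectic capacity. Conformality and monotonicity are immediate from the definition, the equality $Gr(B^{2n}(1)) = \pi$ follows from the identity embedding, and $Gr(B^2(1) \times \mathbb{R}^{2n-2}) = \pi$ is Gromov's nonsqueezing theorem (the lower bound $\geq \pi$ coming from inclusion of a standard ball, the upper bound $\leq \pi$ from nonsqueezing). Granting the strong Viterbo conjecture on the convex set $X$, we then have the crucial identification $c(X) = Gr(X)$, so it suffices to bound the Gromov width by the right-hand side.

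The second step is a short volume comparison. Any symplectic embedding $e: B^{2n}(r) \to X$ preserves the Liouville volume form $\omega_{st}^n / n!$, so
$$
Vol(B^{2n}(r)) = \frac{\pi^n r^{2n}}{n!} \leq Vol(X).
$$
Taking the $n$-th root rearranges this as $\pi r^2 \leq (n!\, Vol(X))^{1/n}$, and taking the supremum over admissible radii gives $Gr(X) \leq (n!\, Vol(X))^{1/n}$. Combining with the identification from the previous step produces the desired inequality $c(X) \leq (n!\, Vol(X))^{1/n}$.

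Once the strong Viterbo conjecture is granted, no genuine difficulty remains; the entire content of the implication is absorbed into that hypothesis, and the volume side is a one-line computation. The real obstacle is therefore the strong Viterbo conjecture itself, which is open in general dimension. The work in this paper establishes it in the case of the Lagrangian bi-disc $P^{2n}_L(a,b)$ via Proposition \ref{StrongViterboProp}, so the argument above then yields the Viterbo volume inequality unconditionally for $X = P^{2n}_L(a,b)$.
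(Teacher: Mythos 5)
The statement you were asked about is an open conjecture: the paper does not prove it, and only remarks that the strong Viterbo conjecture implies it. Your proposal is a correct and complete proof of exactly that implication --- the verification that $Gr$ is a normalized capacity (with nonsqueezing supplying $Gr(B^2(1)\times\mathbb{R}^{2n-2})=\pi$), the identification $c(X)=Gr(X)$ granted the strong Viterbo conjecture, and the volume estimate $\pi^n r^{2n}/n!\leq Vol(X)$ --- and you are appropriately explicit that the argument is conditional, so it matches the paper's (unwritten) reasoning; your closing observation that Proposition \ref{StrongViterboProp} renders the volume inequality unconditional for the convex domain $P_L^{2n}(a,b)$ is also correct.
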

 Results related to this topic can be found in \cite{AAKO14, GHR22}. Our aim here is the next result which can be also deduced from \cite{Ka21}.
 
 \begin{proposition}\label{StrongViterboProp}
 The strong Viterbo conjecture holds for $P_L^{2n}(a,b)$.
 \end{proposition}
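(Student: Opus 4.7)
The strategy is a sandwich argument: show that both the smallest and the largest normalized capacity of $P_L^{2n}(a,b)$ equal $4ab$, and then appeal to the inequalities $Gr(X) \le c(X) \le c_Z(X)$ valid for any normalized capacity $c$, to conclude $c\bigl(P_L^{2n}(a,b)\bigr)=4ab$ universally.

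The lower bound $Gr\bigl(P_L^{2n}(a,b)\bigr) \ge 4ab$ is immediate from Lemma \ref{Sharpness for bi-disc}, which produces an explicit symplectic embedding of $B^{2n}\bigl(2\sqrt{ab/\pi}\bigr)$ into $P_L^{2n}(a,b)$; this ball has Gromov capacity $\pi\cdot (2\sqrt{ab/\pi})^2 = 4ab$.

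For the matching upper bound $c_Z\bigl(P_L^{2n}(a,b)\bigr) \le 4ab$, I would note that $P_L^{2n}(a,b)=B^n(a)\times B^n(b)$ is contained in the slab $R\times \mathbb{R}^{2n-2}$, where $R:=(-a,a)\times(-b,b)\subset \mathbb{R}^2$ is the open rectangle in the first pair of symplectic coordinates $(q_1,p_1)$ (because $\|q\|<a$ and $\|p\|<b$ force $|q_1|<a$ and $|p_1|<b$). The rectangle $R$ has symplectic area $4ab$, and by a standard two-dimensional Moser / Greene--Shiohama argument, for every $\varepsilon>0$ there is an area-preserving embedding $\psi_\varepsilon : R \hookrightarrow B^2(r_\varepsilon)$ with $\pi r_\varepsilon^2 = 4ab+\varepsilon$. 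Taking the product with the identity on the remaining $2n-2$ coordinates, $\psi_\varepsilon \times \mathrm{id}_{\mathbb{R}^{2n-2}}$ is a symplectic embedding of $R\times \mathbb{R}^{2n-2}$ into $B^2(r_\varepsilon)\times \mathbb{R}^{2n-2}$ which restricts to a symplectic embedding of $P_L^{2n}(a,b)$. Thus $c_Z\bigl(P_L^{2n}(a,b)\bigr) \le 4ab+\varepsilon$ for every $\varepsilon>0$, hence $c_Z\bigl(P_L^{2n}(a,b)\bigr) \le 4ab$.

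Combining the bounds closes the sandwich: for any normalized capacity $c$ one has $4ab \le Gr\bigl(P_L^{2n}(a,b)\bigr) \le c\bigl(P_L^{2n}(a,b)\bigr) \le c_Z\bigl(P_L^{2n}(a,b)\bigr) \le 4ab$, so the common value is $4ab$. The substantive content of the proposition is already contained in Lemma \ref{Sharpness for bi-disc}---the explicit construction of an optimal ball inside the Lagrangian bi-disc; the remaining step here is a soft comparison between $Gr$ and $c_Z$, whose only nontrivial ingredient is the elementary two-dimensional rectangle-to-disk embedding. I therefore anticipate no further obstacle beyond the lemma itself.
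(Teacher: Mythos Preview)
Your proposal is correct and follows essentially the same approach as the paper: the lower bound via Lemma~\ref{Sharpness for bi-disc}, the upper bound on $c_Z$ via the projection onto the $(q_1,p_1)$-plane landing in a rectangle of area $4ab$, and the sandwich $Gr\le c\le c_Z$. The only cosmetic difference is that the paper invokes directly that the open square of area $4ab$ is symplectomorphic to the open disc of the same area (so $c_Z\le 4ab$ on the nose), whereas you use an $\varepsilon$-embedding and pass to the limit; both are standard and yield the same conclusion.
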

 
 \begin{proof}
 First, let us give a more elementary proof\footnote{This was pointed out to the author by Egor Shelukhin} that the Gromov width $Gr(P^{2n}_L(a,b))$ is bounded from above by $4ab$.  Take a projection $\pi_1:P^{2n}_L(a,b) \to \mathbb{R}^2$ given by $\pi_1(q,p) = (q_1, p_1)$. The image $\pi_1(P^{2n}_L(a,b))$ is contained in a square of area $4ab$, since the square is symplectomorphic to the disc, we get $c_Z(P^{2n}_L(a,b)) \leq 4ab$, and hence $Gr(P^{2n}_L(a,b))\leq 4ab$. The previous proof is a special case of \cite[Remark 4.2]{AAKO14}. Now, in Lemma \ref{Sharpness for bi-disc} we construct a symplectic embedding from a ball of capacity $4ab$ to $P^{2n}_L(a,b)$, hence
$$Gr(P^{2n}_L(a,b)) = 4ab.$$
We showed that $Gr(P^{2n}_L(a,b))=c_Z(P^{2n}_L(a,b))=4ab$ which ends the proof. 
 \end{proof}

\subsection{Structure of the paper}
In Section \ref{RelGrW}, we recall some previous results about the relative Gromov width, and we give the precise definition of the quantity $\widetilde{Gr}^k(N;D^*N)$. Section \ref{HW} contains an overview of Wrapped Floer homology, which is a Floer-type theory associated to a possibly non-compact Lagrangians $L$ in the completion of a Liouville domain $(M, -d\lambda)$, allowing Hamiltonians $H$ which are not compactly supported. We mainly follow \cite{Ab12}. In Section \ref{HM}, we cover the basics of the Morse theory of the space of paths $\mathcal{P}(q_0,q_1)$ on $N$ with fixed endpoints. We also recall the definitions of the isomorphisms
\begin{align*}
    &\Theta:HM(\mathcal{P}(q_0,q_1)) \to HW(T^*_{q_0} N, T^*_{q_1} N), \\ 
    &\mathcal{F}:HW(T^*_{q_0} N, T^*_{q_1} N) \to HM(\mathcal{P}(q_0,q_1)).
\end{align*}
The isomorphism $\Theta$ is constructed in \cite{APS08, ASch10}, and the isomorphism $\mathcal{F}$ is constructed in \cite{Ab12}. It was also shown in \cite{Ab12} that $\Theta \circ \mathcal{F} = Id$. In Section \ref{Technical results}, we prove the existence of a perturbed $J$-holomorphic half-strip $u_\epsilon:(-\infty, 0] \times [0,1] \to T^*N$ with boundary conditions on the fibers $T^*_{q_0} N$ and $T^*_{q_1} N$, and the  zero section $N$. The curve $u_\epsilon$ is $J$-holomorphic on $D^*N$, which is essential for our application. We prove that the energy of $u_\epsilon$ satisfies $E(u_\epsilon)=d_g(q_0,q_1)+\epsilon$, for any $\epsilon>0$ small enough. This follows from the properties of $\mathcal{F}$ and $\Theta$ in the case when $q_0$ and $q_1$ are connected by a unique length minimizing geodesic. If $q_0$ and $q_1$ belong to the sufficiently small geodesically convex neighborhood in $N$ a similar existence result was earlier obtained using a different technique by Abouzaid in \cite[Chapter 13.5]{Ab15}. This result doesn't seem applicable in our situation, since the curves $u$ from \cite{Ab15} are not $J$-holomorphic on $D^*N$. In Section \ref{MainPropProof}, we prove Proposition \ref{MainProp}. There, we recall the fact from Riemannian geometry that for a generic $q_1$ there is a unique length minimizing geodesic from $q_0$ to $q_1$. We use this property to show that, without loss of generality, we can assume that the centers $q_i=e_i(0)$ of the balls $e_i:B^{2n}(r_i) \to D^*N$ are joined by a unique length minimizing geodesic. By a careful choice of an almost complex structure $J$, and using the monotonicity theorem for minimal surfaces, we bound $\pi r_1^2 + \pi r_2^2$ in terms of $E(u)$. In Section \ref{ProofMain}, we give a precise definition of the function $\rho_\mathcal{W}$, and we construct an explicit symplectic embedding 
$$e:B^{2n}\left(2\sqrt{\frac{ab}{\pi}}\right) \to P_L(a,b),$$
from the ball of capacity $4ab$ to the Lagrangian bi-disc. We use this embedding to prove Proposition \ref{Equivalence of metrics}. In the case $\mathcal{W}=D^*M$, using such an embedding,  we provide a good lower bound for $\rho_\mathcal{W}$ when $q_0$ and $q_1$ are close enough. We conclude the section with the proof of Theorem \ref{Main}.

\textbf{Acknowledgements.} I want to express my gratitude to my advisor Octav Cornea, who introduced me to the problem of relative symplectic embeddings and carefully followed the development of this paper, always asking the right questions and guiding me in the right direction. I am also very thankful to my co-advisor, Egor Shelukhin, for many fruitful discussions. I am grateful to Dylan Cant for helping me draw the Figures with TikZ. Finally, I thank Francois Charette, Yaron Ostrover, and Vinicius G. B. Ramos for their useful comments.  This work is a part of my Ph.D. thesis at the University of Montréal, where I am
partially supported by an ISM scholarship. This research is also partially supported by the Fondation
Courtois.

\section{Preliminaries}\label{Preliminaries}

\subsection{Relative Gromov width}\label{RelGrW}
 Fix a Lagrangian submanifold $L$ of a symplectic manifold $(M,\omega)$. The  \textit{relative Gromov width} is given by the following equation
$$
Gr(L;M)= \sup \{ \pi r^2 \mid \exists e:B^{2n}(r)\rightarrow M \text{ relative to } L \text{, and } e^* \omega = \omega_{st} \},
$$
where $\omega_{st}= \sum dq_i \wedge dp_i$ is the standard symplectic form on $\mathbb{R}^{2n}$. This notion was introduced in \cite{BaC06}. The \textit{Relative packing number} of $k$ balls is 
$$
Gr^k(L, M) = \sup \left\{ k\pi r^2  \Big\vert \exists e:\bigsqcup_{i=1}^kB^{2n}(r)\rightarrow M \text{ relative to } L \text{, } e^* \omega = \omega_{st} \right\}.
$$
A weaker form of the conjecture by Barraud and Cornea from \cite{BaC06} is that any displaceable closed Lagrangian has finite relative  Gromov width. Biran and Cornea gave bounds for the relative packing numbers in the monotone case and proved that displaceable monotone Lagrangians have finite relative Gromov width in \cite{BC09}. The conjecture in the form from \cite{BaC06} was proven by Charette in \cite{Ch12} for monotone Lagrangians. It also holds for orientable two-dimensional Lagrangians  (\cite{Ch15}). In \cite{BM14}, Borman and McLean proved that closed, displaceable, orientable Lagrangians, which admit metrics with non-positive sectional curvature, have finite relative Gromov width. All these results use Floer-theoretic machinery to produce J-holomorphic curves to bound the radius. Surprisingly, Dimitroglou-Rizzel proved in \cite{Ri15} that Lagrangian embeddings in $\mathbb{C}^n$ constructed by Ekholm, Eliashberg, Murphy, and Smith in \cite{EEMS13} have infinite relative Gromov width. These examples are flexible in the sense that their construction uses a certain h-principle. 

From \cite[Proposition  1.8.]{BM14} it follows that
$$Gr(L;M,\omega) = \sup \{Gr(L; \mathcal{W}_L, \omega) \mid \mathcal{W}_L \text{ is a Weinstein neighborhood} \}.$$  
This equality implies that the relative Gromov width is a ``natural" invariant to measure how big a Weinstein neighborhood can be. On the other hand, it suggests that it is worthwhile considering symplectic embeddings to open neighborhoods of the zero section $N$ inside $T^*N$. Now, we define a variant of the relative Gromov width inside $T^*N$, which resembles the invariant $Gr(L_1,L_2;M)$ introduced in \cite{Le07}.

\begin{definition}
The relative packing number of $k$ balls in the unit-disc bundle $D^*N$ is
\begin{equation*}
    \widetilde{Gr}^k(N; D^*N) := \sup \left\{ k\pi r^2  \left. \begin{aligned}
        \text{}\\ \text{}\\ \text{}
    \end{aligned} \right\vert 
        \begin{aligned}
           &\exists e: \bigsqcup_{i=1}^k B_i^{2n}(r) \to D^*N,  \quad e^* \omega = \omega_{st}, \\
           &e^{-1}(L) = \bigsqcup B_i^{n}(r)  \times \{0\}, \quad q_i:= e(0_i)\\
           & \quad e^{-1}\left(\bigsqcup D^*_{q_i}N\right) =\bigsqcup \{0\} \times B_i^{n}(r).  
        \end{aligned} \right\}.
\end{equation*}
In particular, for $k=1$ we call it the Relative Gromov width of the unit-disc bundle $\widetilde{Gr}(N;D^*N)$.
\end{definition}

Note that $\widetilde{Gr}(N;D^*N)= \sup \{ Gr(N, D^*_q N; D^*N) \mid q \in N\}$.
\begin{remark}
Since $Ham_c(D^*N)$ acts transitively on the zero section $N$, Proposition \ref{MainProp} doesn't hold if the embedding $e$ doesn't satisfy the constraint \ref{ImPartCond} on the imaginary parts. The centers $q_i$ of the balls $e(B^{2n}(r_i) )$ can be arbitrarily close in the absence of this constraint. Still, it makes sense to ask whether the assumption about the imaginary part can be removed in Corollaries \ref{RelGr} and \ref{RelGr2}.  The inequalities $\widetilde{Gr}(N;D^*N) \leq Gr(N; D^*N) \leq Gr(D^*N)$ are straightforward from the definitions of the respective versions of the Gromov width. In the existing examples, to the author`s knowledge, the Relative Gromov width $\widetilde{Gr}(N; D^*N)=Gr(D^*N)$ is equal to the full Gromov width, without constraints on the real and the imaginary part. It would be interesting to construct an example where any of the inequalities from above are strict or to prove that these are in fact equalities. It is easy to construct examples with $Gr(N; \mathcal{W})<Gr(\mathcal{W})$, where $\mathcal{W}$ is some non-symmetric open neighborhood of the zero section $N$.
\end{remark}
The following examples are from \cite[Section 6]{KS21}. It is easy to see that these relative embeddings can be chosen to satisfy the condition (\ref{ImPartCond}).
\begin{example}

   (i) Let $(S^n, g)$ the standard sphere $S^n \subset \mathbb{R}^n$ scaled so that $diam (S^n) =1/2$. There are two disjoint relative embeddings $e_i:B^{2n}(1/\sqrt{\pi} ) \to D^*S^n$, hence we get that $\widetilde{Gr}^2(S^n; D^*S^n)=2=4\cdot diam(S^n).$ These two embeddings $e_i$ actually fill out the volume of $D^* S^n$
    
(ii) Let $(\mathbb{R}{P}^n, g)$ be the real projective space with the metric induced from the standard sphere $S^n$, scaled so that $diam (\mathbb{R}{P}^n) =1/4$. There exist a relative symplectic embedding $e:B^{2n}(1/\sqrt{\pi} ) \to D^*\mathbb{R}{P}^n$, which fills out the volume of $D^*\mathbb{R}{P}^n$. We get $\widetilde{Gr}(\mathbb{R}{P}^n;D^*\mathbb{R}{P}^n)= 1 = 4\cdot diam (\mathbb{R}{P}^n).$
\end{example}
In these examples, $diam (N)$ is equal to the injectivity radius $\rho_{inj}$ of $(N,g)$. In the proof of the Proposition \ref{Equivalence of metrics}, we see that if $\rho_{inj} < diam (N)$ the lower bound for $\rho_\mathcal{W} (q_0,q_1)$ is strictly smaller than $d_g(q_0,q_1)$ . Even though the lower bound in the Proposition \ref{Equivalence of metrics} is not necessarily sharp, it suggests that when $\rho_{inj}$ is much smaller than $diam(N)$, inequalities from the Corollaries \ref{RelGr} and \ref{RelGr2} should not be sharp. Indeed, we can always find a metric $g$ on $N$ with $Vol_g(N)=1$ and $diam_g(N)$ arbitrarily big (hence $\rho_{inj}$ is arbitrarily small). In that case, trivial volume obstruction gives a better upper bound on $\widetilde{Gr}^k(N;D^*N)$ than Corollaries \ref{RelGr} and \ref{RelGr2}.

\subsection{Wrapped Floer homology}\label{HW}

In this section, we recall the definition of the Wrapped Floer homology. We mainly follow \cite{Ab12}, except we have changed some sign notation. Hence, we provide proofs of some standard results for the sake of completeness.  Let $(\Bar{M}, -d \Bar{\lambda}, X_{\Bar{\lambda}})$ be a compact Liouville domain i.e. $\Bar{\omega} = -d\Bar{\lambda} $ is a symplectic 2-form, $\alpha:=\Bar{\lambda} \vert_{\partial \Bar{M}}$ is a contact form on $\partial \Bar{M}$ and Liouville vector field $X_{\Bar{\lambda}}$, given by $i_{X_{\Bar{\lambda}}} \Bar{\omega} = -\Bar{\lambda}$, 
is positively transverse to the contact boundary $(\partial \Bar{M}, \alpha)$. Liouville domain admits a positive completion defined by $M:= \Bar{M} \sqcup_{\partial \Bar{M}} \partial \Bar{M} \times [1,+\infty),$
with a symplectic form $\omega = - d\lambda$, where $\lambda = r \alpha$ for $r\in [1,+\infty)$.  For a positive constant $C>0$, we define a class of \textit{admissible} Hamiltonians to be the set $\mathcal{H}_C = \{ H \in C^{\infty}(M) \mid H(x,r) = C r^2, \text{ for } r\geq 2 \}.$ We will use almost complex structures $J\in End(TM, TM)$, which are compatible with $\omega$ and satisfy $\lambda \circ J = - dr,$ for $r\geq 2$. Let $\mathcal{J}$ be the space of all such almost complex structures. For $J \in \mathcal{J}$, we have a maximum principle for perturbed $J$-holomorphic curves, which is needed to achieve the compactness of relevant moduli spaces.

Lagrangian submanifold $L \subset M$ is \textit{exact} if there is a primitive $f:L \to \mathbb{R}$ of the Liouville form $\lambda \vert_L$ restricted to $L$ i.e. $df = \lambda \vert_L.$ We say that $L$ is \textit{cylindrical} if there is a non-empty Legendrian submanifold $\Lambda \subset \partial \Bar{M}$ such that $L \cap \partial \Bar{M} \times [1,+\infty) = \Lambda \times [1, +\infty).$
Since $\alpha\vert_{\Lambda}=0$, a primitive $f$ for L is locally constant on $r\geq1.$

Fix two exact Lagrangians $(L_0, f_0)$ and  $(L_1, f_1)$  which are either closed or cylindrical. Fix $H \in \mathcal{H}_C$, and let $X_H$ be its Hamiltonian vector field, defined by the equation $i_{X_H} \omega = dH.$
For $x:[0,1] \to M$, $x(0)\in L_0$ and $x(1)\in L_1$ we define an action functional
$$
\mathcal{A}_H(x) = \int x^* \lambda - \int_0^1 H(x(t)) dt - f_1(x(1)) + f_0(x(0)).
$$
Critical points of $\mathcal{A}_H$ are solutions of the Hamiltonian equation $x'=X_H \circ x,$ such that $x(0) \in L_0$, $x(1) \in L_1$. Indeed, for a variation $\xi$ of a path $x:[0,1] \to M$ from $L_0$ to $L_1$ we have
$$
d \mathcal{A}_H(\xi) = \int_0^1 \omega(x'(t) - X_H(x(t)),\xi(t)) dt.
$$
\begin{remark}\label{ActionH=f}
If $H(x,r)=f(r)$ for $r\geq 1$ and if $y \in Crit(\mathcal{A}_H)$ is such that  $y(t) \notin  \Bar{M}$ for all $t\in [0,1]$ then
$$
\mathcal{A}_H(y) = r f'(r) - f(r).
$$
\end{remark}
We will assume that all $x \in Crit(\mathcal{A}_H)$ are non-degenerate which means that if $\phi^1_H$ is a time one map of the Hamiltonian vector field $X_H$ then $\phi^1_H(L_0) \pitchfork L_1.$ We define the Wrapped Floer complex of $L_0$ and $L_1$ as
$$
CW(L_0, L_1; H, J) = \bigoplus_{x \in Crit(\mathcal{A}_H)} \mathbb{Z}_2 \langle x \rangle.
$$
For $x_-, x_+ \in Crit(\mathcal{A}_H)$ we define a moduli space $\widetilde{\mathcal{M}}(x_-,x_+;H,J)$ to be the set  of maps $u:\mathbb{R}\times [0,1] \to M$ which satisfy a perturbed $J$-holomorphic equation
\begin{equation}\label{FloerEQ}
    \frac{\partial u}{\partial s}+J\left(\frac{\partial u}{\partial t}  - X_H\right)=0,
\end{equation}
with the following boundary and asymptotic conditions for $i \in \{0,1\}$:
\begin{equation}\label{Bdry}
    \begin{cases}
    &u(\mathbb{R} \times \{i\}) \subset L_i, \\
    &\lim\limits_{s\to \pm \infty} u(s,t) = x_{\pm}(t).
    \end{cases}
\end{equation}
For $u\in \widetilde{\mathcal{M}}(x_-,x_+;H,J)$ we define its energy by the equation
$$
E(u)= \iint_{\mathbb{R} \times [0,1]} \omega \left(\frac{\partial u}{\partial s}, J \frac{\partial u}{\partial s}\right)dsdt.
$$
\begin{lemma}\label{Energy}
For $u \in \widetilde{\mathcal{M}}(x_-, x_+;H,J)$  we have $E(u)=\mathcal{A}_H(x_-) - \mathcal{A}_H(x_+).$
\end{lemma}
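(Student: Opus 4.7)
The plan is a direct Stokes' theorem computation that uses the Floer equation to rewrite the integrand of $E(u)$ as an exact form plus Hamiltonian terms. Applying $J$ to equation (\ref{FloerEQ}) yields $J\partial_s u = \partial_t u - X_H$, so
$$
\omega(\partial_s u, J\partial_s u) \;=\; \omega(\partial_s u, \partial_t u) - \omega(\partial_s u, X_H) \;=\; \omega(\partial_s u, \partial_t u) + \partial_s(H \circ u),
$$
using $\omega(X_H,\cdot)=dH$. Since the paper's convention is $\omega = -d\lambda$, one has $u^*\omega = -d(u^*\lambda)$, hence
$$
\omega(\partial_s u, J\partial_s u)\,ds\wedge dt \;=\; -d\bigl(u^*\lambda - (H\circ u)\,dt\bigr).
$$
Integrating over the strip and applying Stokes reduces $E(u)$ to minus the integral of $u^*\lambda - (H\circ u)\,dt$ around the positively oriented boundary of $\mathbb{R}\times[0,1]$.

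Next I would evaluate this boundary integral on the four edges. On the horizontal sides $\mathbb{R}\times\{i\}$, the form $dt$ vanishes and $u$ maps into $L_i$, so exactness $\lambda|_{L_i}=df_i$ collapses the integrand to $d(f_i\circ u)$; with the induced orientation the two contributions are $f_0(x_+(0))-f_0(x_-(0))$ and $f_1(x_-(1))-f_1(x_+(1))$. On the vertical ends $\{\pm\infty\}\times[0,1]$, the map $u$ asymptotes to the Hamiltonian chords $x_\pm$, so the integrals reduce to $\pm\bigl(\int_0^1 x_\pm^*\lambda - \int_0^1 H(x_\pm)\,dt\bigr)$; by the definition of $\mathcal{A}_H$ these read $\pm\bigl(\mathcal{A}_H(x_\pm) + f_1(x_\pm(1)) - f_0(x_\pm(0))\bigr)$.

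Assembling the four pieces, the $f_0$ and $f_1$ contributions from the horizontal edges cancel exactly against those appearing in the rewritten vertical-edge integrals, leaving $\mathcal{A}_H(x_+) - \mathcal{A}_H(x_-)$ for the boundary integral. Multiplying by the $-1$ from Stokes gives $E(u) = \mathcal{A}_H(x_-) - \mathcal{A}_H(x_+)$, as desired. There is no real analytic difficulty here; the only delicate point is sign bookkeeping, since the paper uses the non-standard convention $\omega = -d\lambda$ and the action functional carries the boundary terms $-f_1(x(1)) + f_0(x(0))$. The core observation is that these $f_i$ terms, built into $\mathcal{A}_H$ precisely so that its critical points capture Hamiltonian chords from $L_0$ to $L_1$, are exactly the ones needed to absorb the Lagrangian boundary contributions produced by Stokes.
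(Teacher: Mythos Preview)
Your proof is correct and follows essentially the same approach as the paper: both arguments use the Floer equation to rewrite the energy density, apply Stokes' theorem, and exploit $\lambda|_{L_i}=df_i$ to evaluate the boundary contributions. The only cosmetic difference is that you package $u^*\lambda - (H\circ u)\,dt$ into a single $1$-form before applying Stokes, whereas the paper treats the $u^*\omega$ and $dH(\partial_s u)$ terms separately; the underlying computation is identical.
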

\begin{proof}
Set $Z:=\mathbb{R}\times [0,1]$. Since $u$ satisfies the equation (\ref{FloerEQ}) we have
\begin{align*}
    E(u)&= \iint_{Z} \omega\left(\frac{\partial u}{\partial s}, J \frac{\partial u}{\partial s} \right) ds dt = \iint_{Z} \omega\left(\frac{\partial u}{\partial s},\frac{\partial u}{\partial t}  - X_H \right) ds dt \\
&= \int u^* \omega + \iint_{Z} dH\left(\frac{\partial u}{\partial s} \right) ds dt = -\int u^*  d\lambda + \iint_{Z} \frac{\partial}{\partial s}(H(u(s,t)) ds dt.
\end{align*}
Because of the boundary conditions (\ref{Bdry})  and since $\lambda\vert_{L_i}=df_i$, after applying Stokes' theorem we get that
\begin{align*}
    E(u) &= \int x_- ^* \lambda -f_1(x_-(1)) + f_0(x_-(0)) - \int_0^1 H(x_-(t)) dt\\
    &  - \int x_+^* \lambda+f_1(x_+(1)) -f_0(x_+(0)) + \int_0^1 H(x_+(t))dt\\
    &= \mathcal{A}_H(x_-) - \mathcal{A}_H(x_+).
\end{align*}
\end{proof}
Now we will show that elements $\widetilde{\mathcal{M}}(x_-,x_+)$ cannot escape to infinity.
\begin{lemma}\cite{Ab12}\label{Escape}
For $x_\pm \in Crit(\mathcal{A}_H)$ there is a compact set $K \subset M$ such that all elements $u \in \mathcal{M}(x_-, x_+)$ satisfy $Im(u) \subset K.$
\end{lemma}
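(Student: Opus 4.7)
The plan is a standard integrated maximum principle argument for $\rho := r \circ u$ on the half-strip $Z := \mathbb{R} \times [0,1]$, exploiting the contact-type condition on $J$ together with the cylindrical structure of $L_0, L_1$. Since the asymptotic orbits $x_\pm$ have compact image, choose $R_0 \geq 2$ with $r\circ x_\pm \leq R_0$; the goal is to show $\rho \leq R_0$ on all of $Z$, which immediately gives $Im(u) \subset \{r \leq R_0\}$, a compact subset of $M$.

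The core calculation takes place in the cylindrical region $\{r \geq 2\}$, where $H = Cr^2$ and the Hamiltonian vector field has the form $X_H = 2Cr R_\alpha$ along the Reeb direction, so $dr(X_H) = 0$ and $\lambda(X_H) = 2Cr^2$. The contact-type identities $\lambda \circ J = -dr$ and (consequently) $dr \circ J = \lambda$, together with the Floer equation rewritten as $\partial_t u = X_H + J\partial_s u$, give on this region
\begin{align*}
\rho_t &= dr(X_H) + dr(J\partial_s u) = \lambda(\partial_s u), \\
\lambda(\partial_t u) &= \lambda(X_H) + \lambda(J\partial_s u) = 2Cr^2 - \rho_s.
\end{align*}
Matching the two expressions $u^*\omega = \omega(\partial_s u, \partial_t u)\, ds\wedge dt$ and $u^*\omega = -d(u^*\lambda) = (\partial_t \lambda(\partial_s u) - \partial_s \lambda(\partial_t u))\, ds \wedge dt$, and using $\omega(\partial_s u, \partial_t u) = -dH(\partial_s u) + \omega(\partial_s u, J \partial_s u)$ with $dH(\partial_s u) = 2Cr\rho_s$, one obtains after cancellation
\begin{equation*}
\Delta \rho - 2Cr\, \rho_s = \omega(\partial_s u, J \partial_s u) \geq 0.
\end{equation*}
Hence $\rho$ is a subsolution of a linear elliptic operator on the cylindrical part, and the strong maximum principle forbids an interior local maximum of $\rho$ in $\{\rho > 2\}$.

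For the boundary $t \in \{0,1\}$, cylindricity gives $L_i \cap \{r \geq 1\} = \Lambda_i \times [1,\infty)$ with $\Lambda_i$ Legendrian, so $\lambda\vert_{TL_i} = r\alpha\vert_{T\Lambda_i} = 0$ on the end. At any boundary point with $\rho \geq 2$, the tangent vector $\partial_s u$ lies in $TL_i$ so $\lambda(\partial_s u) = 0$, and the identity $\rho_t = \lambda(\partial_s u)$ yields the Neumann condition $\partial_t \rho = 0$. By Hopf's lemma, $\rho$ cannot attain a strict local maximum at such a boundary point either. Combining both: if $\rho > R := \max(R_0, 2)$ somewhere, the set $U := \{\rho > R\}$ is a nonempty open subset of $Z$ bounded in $s$ by the asymptotic condition $\rho \to r\circ x_\pm \leq R$, so $\sup_{\overline U} \rho$ is attained; but this supremum can lie neither in the interior of $U$ nor on $\partial Z \cap \overline U$, giving a contradiction. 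Therefore $\rho \leq R$ everywhere and $K := \{r \leq R\}$ is the desired compact set.

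The main delicate step is the sign bookkeeping leading to $\Delta \rho - 2Cr\rho_s \geq 0$ and checking that the cylindrical-Legendrian hypothesis is precisely what makes the Neumann condition hold on $\partial Z \cap \{\rho \geq 2\}$; once those two calculations are correctly in place, the rest reduces to the scalar maximum principle applied to $\rho$.
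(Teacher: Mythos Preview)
Your argument is correct and is a genuine alternative to the paper's. You run a \emph{pointwise} maximum principle: from the contact-type identity $\lambda\circ J=-dr$ (and hence $dr\circ J=\lambda$, using $J^2=-\mathrm{Id}$) you derive the subsolution inequality $\Delta\rho - 2C\rho\,\rho_s = \omega(\partial_s u, J\partial_s u)\ge 0$ on $\{\rho>2\}$, check the Neumann condition $\rho_t=\lambda(\partial_s u)=0$ on $\partial Z$ from cylindricity of the $L_i$, and conclude via the strong maximum principle and Hopf's lemma. The paper instead runs an \emph{integrated} (Stokes-type) maximum principle: it restricts the energy identity to the sublevel set $S=u^{-1}(\{r\ge r_0\})$, uses Stokes' theorem and the same identity $\lambda\circ J=-dr$ to rewrite $0<E(u|_S)=-\int_{\partial_r S} dr\circ du\circ i$, and then observes this last integral is $\le 0$ for orientation reasons, a contradiction. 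Your approach is closer to classical elliptic PDE and makes the role of the Neumann boundary condition very transparent; the paper's approach is more self-contained (no black-box invocation of strong max/Hopf) and treats the boundary contribution in one line via $\lambda|_{L_i}=0$ and $dt|_{\partial_l S}=0$. Two cosmetic points worth tidying in your write-up: (i) $r$ is only defined on the cylindrical end, so either extend it by a value $\le 1$ on $\bar M$ or explicitly restrict the discussion to the open set $u^{-1}(\{r>2\})$; (ii) in your displayed inequality, $r$ should really read $\rho=r\circ u$, so the first-order coefficient depends on the solution---this is harmless because once you know $\sup\rho<\infty$ (from continuity of $u$ and the asymptotics) the operator has bounded coefficients and the strong maximum principle applies.
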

\begin{proof}
Take $r\geq 2$ which separates $x_-$ and $x_+$ from infinity, and assume that $Im (u) \cap \partial \Bar{M}\times (r, +\infty) \neq \emptyset$. Hence, there is a compact surface $S:= u^{-1}(\partial \Bar{M}\times [r, +\infty)) \subset \mathbb{R} \times [0,1]$ whose boundary has two parts:
$$
\partial_r S = u^{-1}(\partial \Bar{M}\times \{r\}) \text{ and } \partial_l S = S \cap u^{-1}(L_0 \cup L_1).
$$
Now we have
\begin{align*}
    0 < E(u\vert_S) &= \iint_{S} \omega\left(\frac{\partial u}{\partial s}, J \frac{\partial u}{\partial s} \right) ds dt = -\iint_S u^* d\lambda + \iint_S u^*(dH\wedge dt) \\
    &=-\int_{\partial S} u^* \lambda + \int_{\partial S} u^*H dt = -\int_{\partial_r S} u^*\lambda + \int_{\partial_r S} u^*H dt.
\end{align*}
Last equality holds since $\lambda\vert_{L_i} = 0$ and since $dt = 0$ on $\partial_l S$. Since $du-X_H \otimes dt = -J\circ(du-X_H \otimes dt) \circ i$ we have that 
$$
- \lambda \circ du \vert_{\partial_r S} = \lambda \circ J \circ (du-X_H \otimes dt) + \lambda(X_H) dt = \lambda \circ J \circ du \circ i + 2Cr^2 dt.
$$
The last equality holds because $\lambda(J X_H) = 0$ and $\lambda (X_H) = 2C r^2$. This implies
\begin{align*}
    0<E(u)= \int_{\partial_r S} (\lambda \circ J) \circ du \circ i + \int_{\partial_r S} 3C r^2 dt= - \int_{\partial_r S} dr \circ du \circ i,
\end{align*}
because $\int_{\partial_r S}dt=0$ and $\lambda \circ J = - dr$ . On the other hand, if $\xi \in T \partial_r S$ is positively oriented then $i \xi$ points inwards $S$ and hence $du(i \xi)$ points outwards on the boundary $\partial \Bar{M}$, hence $dr (du(i\xi)) \geq 0,$
which leads to the contradiction.\end{proof}

The moduli space $\widetilde{\mathcal{M}}(x_-,x_+;H,J)$ admits an $\mathbb{R}$-action by translation in the $s$ coordinate. Let $\mathcal{M}(x_-,x_+)$ be the quotient of $\widetilde{\mathcal{M}}(x_-,x_+;H,J)$ by the $\mathbb{R}$ action. Because of the Lemmas \ref{Energy} and \ref{Escape}, we can apply Gromov's compactness theorem (\cite{Gr85}) for moduli space $\mathcal{M}(x_-,x_+)$. The exactness of the symplectic manifold $M$ and the exactness of the Lagrangian submanifolds $L_0$ and $L_1$ exclude the possibility of bubbling phenomena. The regularity of the moduli space $\mathcal{M}(x_-,x_+)$ follows from standard results (\cite{Fl88, FHS94}). It is achieved by perturbing an admissible almost complex structure $J \in \mathcal{J}$. We define the differential $d: CW(L_0, L_1) \to CW(L_0,L_1)$ on the generators to be $$d x_- = \sum_{\substack{x_+ \\ dim \mathcal{M}(x_-,x_+) = 0}} \#_2 \mathcal{M}(x_-, x_+) x_+,$$ and extend it to $CW(L_0,L_1)$ by linearity. By standard results (\cite{Fl88, FHS94}) it follows that $d^2 = 0.$

Homology of the complex $CW(L_0, L_1; H, J)$ does not depend on the choice of $H \in \mathcal{H}_C$. To see that, fix $H_1, H_2 \in \mathcal{H}_C$ and take a smooth path $H_s \in \mathcal{H}_C$ of Hamiltonians for $s \in \mathbb{R}$, such that $H_s (x) = 
H_1(x)$ for  $s\leq 0$, and $H_2(x)$ for  $s\geq 1$. For $x \in Crit(\mathcal{A}_{H_1})$ and $y \in Crit(\mathcal{A}_{H_2})$ define a moduli space $\mathcal{M}(x,y;H_s,J)$ to be the set of maps $u:\mathbb{R}\times [0,1] \to M$ such that 
\begin{equation*}
    \begin{cases}
    & \frac{\partial u }{ \partial s} + J \left(\frac{\partial u }{ \partial s} - X_{H_s} \right) = 0, \\
    &u(\mathbb{R} \times \{i\}) \subset L_i, \\
    &\lim\limits_{s\to - \infty} u(s,t) = x(t),\\
    &\lim\limits_{s\to + \infty} u(s,t) = y(t).
    \end{cases}
\end{equation*}
A count of  rigid objects from the zero-dimensional moduli space $\mathcal{M}(x,y)$ defines a continuation map
$$
\Phi_{H_s}: CW(L_0, L_1; H_1, J) \to CW(L_0, L_1; H_2, J).
$$
It is a standard result that the map $\Phi_{H_s}$ is a chain map and induces an isomorphism in homology. By a similar argument, $HW(L_0, L_1; H,J)$ does not depend on an almost complex structure $J \in \mathcal{J}$.
\begin{remark}
Since we work in the ungraded setting and we work with $\mathbb{Z}_2$ coefficients, we do not need to impose any other assumptions on $M$, $L_0$, and $L_1$.
\end{remark}
For the proof of the Proposition \ref{MainProp}, our ambient symplectic manifold will be a cotangent bundle $T^*N$ of a closed Riemannian manifold $(N,g)$. We will consider Wrapped Floer homology $HW(L_0,L_1)$  for the two fibers $L_0:=T_{q_0}^*N$ and $L_1:=T_{q_1}^*N$. Any fiber $T_q^*N$ is an exact, cylindrical Lagrangian and the canonical Liouville form $\lambda$ vanishes on the fiber, hence the primitive $f$ can be chosen to be identically equal to $0$. 
The Hamiltonian function given by $H(p) = C \|p\|^2_g$
belongs to $\mathcal{H}_C$ and for a generic choice  of $q_1\in N$  we have
$\phi_H^1(T_{q_0}^*N) \pitchfork T_{q_1}^*N.$ For such a Hamiltonian $H$, Hamiltonian chords of $X_H$ are lifts of the geodesics from $q_0$ to $q_1$ since the flow $\phi_H^t$ of $X_H$ is just a reparametrization of the co-geodesic flow.

\subsection{Morse theory for the space of paths}\label{HM}
Let $(N,g)$ be a closed Riemannian manifold, and let $\mathcal{P}(q_0,q_1)$ be a $W^{1,2}$ completion of the space of smooth paths $x:[0,1] \to N$ such that $x(0)=q_0$ and $x(1)=q_1$. Lagrangian function $L:TN \to \mathbb{R}$ induces a functional $\mathcal{L}:\mathcal{P}(q_0,q_1) \to \mathbb{R}$
on the space of paths defined by
$$
\mathcal{L}(x)=\int_0^1 L(x(t),x'(t)) dt.
$$
\textit{Fenchel} dual of $L$ is $H:T^*N \to \mathbb{R}$ defined by
$$
H(q,p)= \max_{v \in T_q N} p(v) - L(q,v).
$$
If we set $L(v):= \frac{1}{4C} \|v\|_g^2$, then it's Fenchel dual is $H(q,p)=C \|p\|^2_g.$ Such $L$ and $H$ satisfy respectively the conditions (L1), (L2), and (H1),(H2), from Sections 2.1 and 3.1 in \cite{ASch10}. Following \cite{APS08, ASch10}, we define the Morse complex for $\mathcal{P}(q_0,q_1)$ as
\begin{equation*}
    CM(\mathcal{P}(q_0,q_1)) = \bigoplus_{\gamma \in Crit(\mathcal{L})} \mathbb{Z}_2 \langle \gamma \rangle.
\end{equation*}
Let $X$ be a smooth pseudo-gradient for $\mathcal{L}$ and for critical point $\gamma \in Crit(\mathcal{L})$, such a vector field exists by \cite{ASch09}. Define stable and unstable manifolds of $\gamma$ to be
\begin{align*}
    W^s(\gamma, X) &= \{ p \in \mathcal{P}(q_0, q_1) \mid \lim_{t \to +\infty } \phi_t(p) = \gamma  \} \\
    W^u(\gamma, X) &= \{ p \in \mathcal{P}(q_0, q_1) \mid \lim_{t \to -\infty } \phi_t(p) = \gamma \}
\end{align*}
For a generic choice of the Riemannian metric $g$ we have that $W^u(\gamma_-) \pitchfork W^s(\gamma_+)$ for all $\gamma_-, \gamma_+ \in Crit(\mathcal{L})$. This intersection is a finite-dimensional manifold whose dimension is equal to the difference $i(\gamma_-) - i(\gamma_+),$ where $i(\gamma)$ is the Morse index of a geodesic $\gamma$. Since we work in the ungraded setting, we do not keep track of the indices. The manifold $W^u(\gamma_-) \cap W^s(\gamma_+)$ admits a free action by $\mathbb{R}$ and we define $\mathcal{M}(\gamma_-, \gamma_+)$ to be the quotient by this action. The differential $\partial: CM(\mathcal{P}(q_0,q_1)) \to CM(\mathcal{P}(q_0,q_1)),$ is defined by 
$$
\partial \gamma_- = \sum_{\substack{\gamma_+ \\ dim \mathcal{M}(\gamma_-,\gamma_+) = 0}} \#_2 \mathcal{M}(\gamma_-, \gamma_+) \gamma_+.
$$
From the standard techniques, it follows that $\partial \circ \partial = 0$.  The Morse complex $CM(\mathcal{P}(q_0,q_1))$ is quasi-isomorphic to the Wrapped Floer complex of two fibers $CW(T_{q_0}^*N, T_{q_1}^*N),$ using counts of geometric objects (see \cite{APS08}). In \cite[Section 5]{Ab12}, it was shown that this quasi-isomorphism admits a right inverse, which is also defined by a suitable count of geometric objects. Now we formulate the theorem which combines these results.
\begin{theorem}\cite{APS08, Ab12}\label{Isomorphism}
For a closed Riemannian $(N,g)$ and $q_0, q_1 \in N$ there are quasi-isomorphisms
\begin{align*}
    &\Theta: CM(\mathcal{P}(q_0, q_1) ) \to CW(T_{q_0}^*N, T_{q_1}^*N),\\
    &\mathcal{F}: CW(T_{q_0}^*N, T_{q_1}^*N) \to CM(\mathcal{P}(q_0, q_1)),
\end{align*}
such that $\Theta \circ \mathcal{F}$ is chain homotopic to the identity.
\end{theorem}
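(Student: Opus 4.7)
The plan is to construct both $\Theta$ and $\mathcal{F}$ as mod-$2$ counts of hybrid moduli spaces that mix Morse trajectories in $\mathcal{P}(q_0,q_1)$ with Floer half-strips in $T^*N$, and then to produce the chain homotopy $\Theta \circ \mathcal{F} \sim \mathrm{id}$ via a parametrized moduli space whose boundary realizes the required algebraic identity.

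For $\Theta$, following Abbondandolo-Portaluri-Schwarz, the matrix coefficient $\langle \Theta \gamma, x \rangle$ is the mod-$2$ count of rigid pairs $(\xi, u)$, where $\xi:[0,T] \to \mathcal{P}(q_0,q_1)$ is a finite-length negative pseudo-gradient trajectory of $\mathcal{L}$ starting at $\gamma$, and $u:[0, \infty) \times [0,1] \to T^*N$ is a Floer half-strip solving (\ref{FloerEQ}) with Lagrangian boundary conditions on the two fibers, converging to the Hamiltonian chord $x$ as $s \to +\infty$, and matching the Morse trajectory at $s=0$ through the cotangent projection $\pi \circ u(0,\cdot) = \xi(T)$. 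Dually, I would define $\mathcal{F}$ by counting the mirror configurations à la Abouzaid: a Floer half-strip on $(-\infty, 0] \times [0,1]$ converging to $x$ as $s \to -\infty$, attached at $s=0$ to a negative pseudo-gradient half-trajectory which starts at $\pi \circ u(0,\cdot)$ and converges to $\gamma$ at $+\infty$. That both $\Theta$ and $\mathcal{F}$ are chain maps follows from the usual analysis of $1$-dimensional strata, whose boundary configurations split into a Floer breaking on one side of the matching point and a Morse breaking on the other, yielding $d \circ \Theta = \Theta \circ \partial$ and $\partial \circ \mathcal{F} = \mathcal{F} \circ d$.

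To obtain the homotopy $\Theta \circ \mathcal{F} \sim \mathrm{id}$ on $CW$, I would consider a $1$-parameter family of moduli spaces depending on a gluing parameter $R \in [0, +\infty]$ that interpolates between a long-neck configuration at $R = +\infty$, whose rigid count equals the composition $\Theta \circ \mathcal{F}$, and a degenerate configuration at $R = 0$ in which the Morse portion has collapsed to a point and one is left with a single unbroken Floer trajectory on $\mathbb{R} \times [0,1]$ asymptotic to a common chord at both ends; an energy/action argument shows that the rigid count of the latter produces the identity on $CW$. The boundary of the $1$-dimensional parametrized moduli space then supplies the chain homotopy $K$ satisfying $\Theta \circ \mathcal{F} - \mathrm{id} = dK + Kd$.

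The hard part will be compactness of these hybrid moduli spaces. Because $H$ grows quadratically at infinity and the fibers $T^*_{q_i} N$ are non-compact, Floer half-strips could a priori escape to infinity in $T^*N$; this is excluded by a maximum principle in the spirit of Lemma \ref{Escape}, combined with the $L^\infty$ and Palais-Smale estimates of Abbondandolo-Schwarz tailored to cotangent fiber boundary conditions. Transversality for the mixed Floer-Morse evaluation constraint $\pi \circ u(0,\cdot) = \xi(T)$ is the genuinely delicate analytic step, requiring a Fredholm analysis of the linearized matching condition together with generic perturbations of the almost complex structure on $T^*N$ and of the pseudo-gradient on $\mathcal{P}(q_0,q_1)$; the same considerations are needed on the parametrized moduli space so that the homotopy $K$ is well-defined.
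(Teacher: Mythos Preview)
Your definition of $\mathcal{F}$ has a genuine gap. You describe it as the mirror image of $\Theta$: a half-strip on $(-\infty,0]\times[0,1]$ asymptotic to $x$ at $-\infty$, with the constraint that the projected path $\pi\circ u(0,\cdot)$ lies on $W^s(\gamma)$. But this naive dual does not yield an a priori energy bound. For $\Theta$ one has $E(u)=\mathcal{A}_H(u(0,\cdot))-\mathcal{A}_H(x)$ and then Lemma~\ref{A<L} together with $\pi\circ u(0,\cdot)\in W^u(\gamma)$ gives $\mathcal{A}_H(u(0,\cdot))\le\mathcal{L}(\pi\circ u(0,\cdot))\le\mathcal{L}(\gamma)$. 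In your mirror picture the energy is $E(u)=\mathcal{A}_H(x)-\mathcal{A}_H(u(0,\cdot))$, and now you would need a \emph{lower} bound on $\mathcal{A}_H(u(0,\cdot))$; Lemma~\ref{A<L} goes the wrong way, and membership in $W^s(\gamma)$ only gives $\mathcal{L}(\pi\circ u(0,\cdot))\ge\mathcal{L}(\gamma)$, which is useless here. Without an energy bound, Gromov compactness fails and the count defining $\mathcal{F}$ is not even finite.

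The fix, which is precisely Abouzaid's construction recorded in the paper, is to impose an honest Lagrangian boundary condition at $s=0$: one requires $u(0,t)\in N$ (the zero section) and takes $H\in\mathcal{H}^{rel}_C$ so that $H|_N$ is constant. Since $\lambda|_N=0$, this forces $\mathcal{A}_H(u(0,\cdot))=-H|_N$, giving the clean identity $E(u)=\mathcal{A}_H(x)+H|_N$ of (\ref{EnergyF}). This is not a cosmetic change: the zero-section boundary condition is what makes $\mathcal{F}$ well-defined and is also what makes the curves $u$ genuinely $J$-holomorphic on $D^*N$, the feature the paper later exploits in Section~\ref{MainPropProof}. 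Your homotopy sketch at $R=0$ (``a single unbroken Floer trajectory on $\mathbb{R}\times[0,1]$'') likewise does not match what actually happens once the zero-section boundary is present; the correct cobordism in \cite{Ab12} is organized differently. Note finally that the paper does not prove Theorem~\ref{Isomorphism} itself: it records the definitions of $\Theta$ and $\mathcal{F}$ and cites \cite{APS08,ASch10,Ab12} for the chain-map, quasi-isomorphism, and homotopy statements.
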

We will spend the rest of this section defining $\Theta$ and $\mathcal{F}$. For $\gamma \in Crit(\mathcal{L})$ and $x \in Crit(\mathcal{A}_H)$ we define $\mathcal{M}^{\Theta}(\gamma, x)$ to be the set of maps $u:[0,+\infty) \times [0,1] \to T^*N$ such that

\begin{equation*} 
    \begin{cases}
    &\frac{\partial u}{\partial s}+J\left(\frac{\partial u}{\partial t}  - X_H\right)=0, \\
    &u([0,+\infty) \times \{i\}) \subset T_{q_i}^*N, \\
    &\lim\limits_{s\to +\infty} u(s,t) = x(t),\\
    &\pi\circ u(0,t) \in W^u(\gamma).
    \end{cases}
\end{equation*}

Map $\Theta: CM(\mathcal{P}(q_0, q_1) ) \to CW(T_{q_0}^*N, T_{q_1}^*N)$ is defined by

\begin{equation*}
    \Theta(\gamma)= \sum_{\substack{x \in Crit(\mathcal{A}_H) \\ dim \mathcal{M}^{\Theta}(\gamma,x) = 0}} \#_2 \mathcal{M}^{\Theta}(\gamma,x) x.
\end{equation*}
It follows from \cite{APS08, ASch10} that $\Theta$ is a well-defined chain map. For $u \in \mathcal{M}^{\Theta}(\gamma,x)$ we define its energy by the equation
$$
E(u):= \int_0^1 \int_0^\infty \omega\left(\frac{\partial u}{\partial s}, J \frac{\partial u}{\partial s} \right) ds dt.
$$

\begin{lemma}\cite{ASch05}  \label{A<L}
If $H$ is the Fenchel dual of $L$ then  $\mathcal{A}_H(x) \leq \mathcal{L}(\pi \circ x)$ for $x\in C^{\infty}([0,1]; T^*N)$. Moreover, the equality holds if $x \in Crit(\mathcal{A}_H)$. 
\end{lemma}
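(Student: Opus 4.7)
The plan is to reduce the statement to the pointwise Fenchel inequality and then integrate. First, I would fix the setup: since $L_0 = T_{q_0}^*N$ and $L_1 = T_{q_1}^*N$ are cotangent fibers, the canonical Liouville form $\lambda = p\,dq$ on $T^*N$ vanishes on each fiber, so the primitives $f_0, f_1$ can be chosen identically zero. Writing $x(t) = (q(t), p(t))$ with $q := \pi \circ x$, the pullback of $\lambda$ is $x^*\lambda = p(t)(q'(t))\,dt$, so the action reduces to
\begin{equation*}
\mathcal{A}_H(x) = \int_0^1 \bigl[ p(t)(q'(t)) - H(q(t), p(t)) \bigr]\, dt.
\end{equation*}

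Next, I would invoke the pointwise Fenchel inequality. By the very definition
$$H(q, p) = \max_{v \in T_qN} \bigl( p(v) - L(q,v) \bigr),$$
we have $H(q,p) \geq p(v) - L(q,v)$ for every $v \in T_qN$. Applying this with $v = q'(t)$ and $p = p(t)$ and integrating over $[0,1]$ yields
$$\mathcal{A}_H(x) = \int_0^1 \bigl[ p(t)(q'(t)) - H(q(t),p(t)) \bigr] dt \leq \int_0^1 L(q(t), q'(t))\, dt = \mathcal{L}(\pi \circ x),$$
which is the desired inequality.

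For the equality clause, I would use the Legendre transform characterization of when Fenchel's inequality is saturated: $p(v) - H(q,p) = L(q,v)$ precisely when $v$ achieves the maximum in the definition of $H$, equivalently when $v = \partial_p H(q,p)$. If $x$ is a critical point of $\mathcal{A}_H$, then $x' = X_H \circ x$, and in canonical coordinates this is exactly Hamilton's equation $q' = \partial_p H(q,p)$. Thus the Fenchel equality holds pointwise along $x$, and integrating gives $\mathcal{A}_H(x) = \mathcal{L}(\pi \circ x)$.

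No genuine obstacle is expected here; the lemma is essentially a restatement of the Legendre--Fenchel duality between $L$ and $H$. The only mild care needed is to ensure that the boundary terms involving $f_0, f_1$ vanish (which they do because $\lambda$ restricts trivially to the fibers) and to justify that the chosen $L(v) = \tfrac{1}{4C}\|v\|_g^2$ satisfies the regularity hypotheses of \cite{ASch10} so that the Fenchel duality can be legitimately applied pointwise.
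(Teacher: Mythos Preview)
Your proposal is correct and follows essentially the same approach as the paper: both arguments expand $\mathcal{A}_H(x)$ as $\int_0^1 [p(q') - H]$, apply the pointwise Fenchel inequality (the paper phrases it as $-\max_v = \min_v$ and then substitutes $v = \pi_* x'$, which is the same computation), and for the equality case both invoke the Legendre correspondence between Hamilton's equation $q' = \partial_p H$ and the condition $p = D_{\mathrm{vert}}L(q')$ saturating Fenchel. Your treatment of the boundary terms $f_0, f_1$ is actually more explicit than the paper's, which silently uses that the primitives vanish on fibers.
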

\begin{proof}
\begin{align*}
\mathcal{A}_H(x) &=\int_0^1 \lambda(x'(t)) - H(x(t)) dt = \int_0^1 \lambda(x') - \max_{v}\left( x(v) - L(\pi\circ x ,v) \right) dt \\ 
&= \int_0^1 \min_{v}\left(x ( \pi_* x') - x(v) + L(\pi \circ x,v) \right) dt\\
&\leq \int_0^1 L(\pi \circ x, \pi_* x') dt = \mathcal{L}(\pi \circ x) .
\end{align*}
The last inequality follows from setting $v= \pi_* x' = (\pi \circ x)'$. It follows from the definition of $H$ that $H(q,p)=p( D_{vert}^{-1}L (p) )- L(q, D_{vert}L^{-1}(p) )$ where $D_{vert}L:TN \to T^*N$ is the derivative of $L$ in the fiber direction, called the Legendre transformation. So, equality holds iff $x(t)=(q(t), p(t))$ is such that $D_{vert} L ( q'(t) ) = p(t)$. In particular, since the Legendre transformation gives a one-to-one correspondence between the critical points of $\mathcal{L}$ and solutions of the Hamiltonian equation, equality holds if $x'=X_H \circ x$.
\end{proof}
From Lemmas \ref{Energy} and \ref{A<L} we have that the energy of $u\in \mathcal{M}^{\Theta}(\gamma, x)$ satisfies $E(u)\leq \mathcal{L}(\gamma) - \mathcal{A}_H(x).$ This inequality is essential for the proof of compactness results for the moduli space $\mathcal{M}^{\Theta}(\gamma,x)$, and it also guarantees that $\Theta$ is quasi-isomorphism by diagonal argument (see \cite[Section 3.5]{ASch05}).

Define $\mathcal{H}^{rel}_C \subset \mathcal{H}_C$ to be the set of $H$ which are constant on $N \subset T^*N$.
Fix $H\in \mathcal{H}^{rel}_C$,  for $x \in Crit(\mathcal{A}_H)$, following \cite{Ab12} we define the moduli space $\mathcal{M}^{\mathcal{F}_H}(x, \gamma)$ to be the set of maps $u:(-\infty,0]\times[0,1] \to T^*N$ which are satisfying the following conditions 
\begin{equation*}
    \begin{cases}
    &\frac{\partial u}{\partial s}+J\left(\frac{\partial u}{\partial t}  - X_H\right)=0, \\
    &u((-\infty,0] \times \{i\}) \subset T_{q_i}^*N, \\
    &u_0(t):=u(0,t) \in N \subset T^*N,\\
    &\lim\limits_{s\to -\infty} u(s,t) = x(t), \quad u_0 \in  W^s(\gamma).
    \end{cases}
\end{equation*}
Now, we define a map $\mathcal{F}_H: CW(T_{q_0}^*N, T_{q_1}^*N;H,J) \to CM(\mathcal{P}(q_0, q_1), \mathcal{L})$ by the equation

\begin{equation*}
    \mathcal{F}_H(x)= \sum_{\substack{\gamma \in Crit(\mathcal{L}) \\ dim \mathcal{M}^{\mathcal{F}_H}(x, \gamma) = 0}} \#_2 \mathcal{M}^{\mathcal{F}_H}(x,\gamma) \gamma.
\end{equation*}
As in Lemma \ref{Energy} we have that every $u\in \mathcal{M}^{\mathcal{F}_H}(x, \gamma)$ satisfies
\begin{equation} \label{EnergyF}
   E(u)=\mathcal{A}_H(x) - \mathcal{A}_H(u_0)= \mathcal{A}_H(x) + H\vert_N. 
\end{equation}
Last equality holds since $H$ is constant on $N$ and $\lambda\vert_N = 0$.

\section{Proofs}

\subsection{Existence of a $J$-holomorphic curve}\label{Technical results}

Assume that there is a unique length minimizing geodesic $\Bar{\gamma}:[0,1] \to N$ from $q_0$ to $q_1$. Define a Hamiltonian $H:T^*N \to \mathbb{R}$ by setting
$$
H(p)=\frac{d_g (q_0,q_1)}{2} \|p\|^2_g.
$$
Let $\delta>0$ be such that if $\gamma \in \mathcal{P}(q_0,q_1)$ is a geodesic, and $\gamma \neq \Bar{\gamma}$ then $l(\gamma)>d_g (q_0,q_1)(1+\delta).$ Let $\Bar{x}$ be the chord which corresponds to the geodesic $\Bar{\gamma}$. Using Remark \ref{ActionH=f}, action of $\Bar{x}$ satisfies $\mathcal{A}_{H}(\Bar{x}) = d_g (q_0,q_1) / 2$, and for any chord $y\neq \Bar{x}$, we have $\mathcal{A}_H(y) \geq  d_g(q_0, q_1) (1+\delta)^2 /2$. 

Set $d:=d_g (q_0,q_1)$, for $\epsilon>0$ small enough let $f_{\epsilon}:[0,+\infty) \to [0,+\infty)$ be a smooth function, as in Figure (\ref{Functionf}), which satisfies the following conditions 
\begin{itemize}
    \item[1.)] $f_\epsilon(r)=\begin{cases}
    d/2+\epsilon, &r\leq 1\\
    d r^2/2, &r\geq1+\delta,
    \end{cases}$
\item[2.)] $f_{\epsilon}(r) \geq d r^2/2 $, and $f''_\epsilon (r)>0$ for $r\in (1, 1+\delta)$,
\item[3.)] if $r_{\epsilon}:=1+\epsilon / d $, $f_{\epsilon}'(r_{\epsilon})=d$ and $f_{\epsilon}(r_{\epsilon}) = d/2 + 3\epsilon /2$.
\end{itemize}
Such a function exists if $r_{\epsilon} < 1 + \delta$ and $d r_{\epsilon}^2 /2 \leq f_{\epsilon}(r_{\epsilon})  < d (1+\delta)^2 /2 $, i.e. if $\epsilon < \min\{d, \delta d/2\}$. Function $f_\epsilon$ is chosen in a way that there is a one-to-one correspondence between Hamiltonian chords of  $H(p)=d \|p\|^2_g /2$ and $H_{\epsilon}(p) = f_\epsilon (\| p\|_g) $. Define $x_\epsilon$ to be the unique Hamiltonian chord of $H_\epsilon$ which corresponds to $\Bar{\gamma}$. From the conditions $1.)$ and $2.)$ one can estimate that the action of $x_{\epsilon}$ satisfies $\mathcal{A}_{H_{\epsilon}}(x_{\epsilon}) \in (d/2 - \epsilon, d/2)$. The last condition is equivalent to $$\mathcal{A}_{H_{\epsilon}}(x_{\epsilon}) =  r_{\epsilon} f_{\epsilon}'(r_{\epsilon}) - f_{\epsilon}(r_{\epsilon})= \frac{d - \epsilon}{2}.$$
 \begin{figure}[ht]
    \centering
    \begin{tikzpicture}[xscale=2.5,yscale=2]
    \draw[line width=1pt,blue!80!pink] (-1.5,{0*0.4})--(0,{0*0.4}) to[in={201.8014},out=0] coordinate[pos=0.5](Y) (0.5,{0.4*0.25}) plot[domain=0.5:1.5,variable=\x] ({\x},{0.4*\x * \x});
    \foreach \x in {0,1} {
      \draw[dashed,black!50!white] ({\x/2},-0.25)coordinate(X\x)--({\x/2},{2.25*0.4});
    }
    \foreach \x in {15} {
      \path (Y)--+(\x:-2)coordinate(Z1);
      \path (Y)--+(\x:2) coordinate (Z2);
    }
    \begin{scope}
      \clip (-1.5,-0.25) rectangle (1.5,{2.25*0.4});
      \path[name path={P1}] (-1.5,-0.25)--+(2,0) (1.5,-0.25)--+(0,2);
      \path[name path={P2}] (Z1)--(Z2);
      \path[name intersections={of=P1 and P2}];
      \coordinate (W1) at (intersection-1);
      \coordinate (W2) at (intersection-2);
      \draw[red,line width=1pt] (Z1)--(Z2);
    \end{scope}

    \draw (-1.5,-0.25) rectangle (1.5,{2.25*0.4}) node[blue] [right] {$y=f_{\epsilon}(r)$};
    \node at (X0)[below,inner sep=5pt] {$1$};
    \node at (X1)[below,inner sep=5pt] {$1+\delta$};
    \node at (0.975, -0.27) [above][red] {$y=d_g(q_0, q_1)(r-1/2)+\epsilon/2$};
    \node at (-1.5,-0.1) [above left][blue] {$\frac{d_g(q_0,q_1)}{2} + \epsilon$};
    \node[draw,circle,fill=black,inner sep=1pt] at (Y){};
  \end{tikzpicture}
    \caption{Graph of the function $f_\epsilon$.}
    \label{Functionf}
\end{figure}
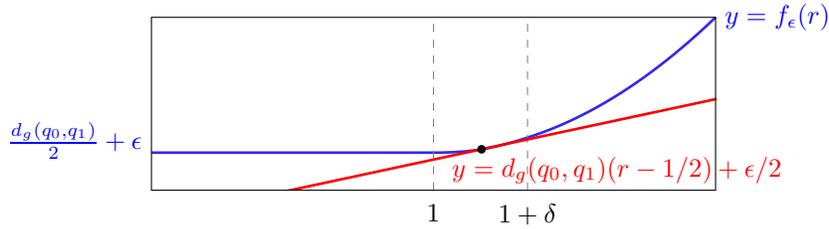

Let $H_s \in \mathcal{H}^{rel}_C$ be a path of Hamiltonians such that
$$
H_s (p) = \begin{cases}
\frac{d}{2} \|p\|^2_g, &\| p\|_g \geq 1 + \delta\\
H(p), & s\leq 0,\\
H_\epsilon(p), & s\geq 1.
\end{cases}
$$
Let $\Phi_{H_s}$ be a continuation map associated with the path $H_s$.

\begin{proposition}\label{Homotopy}
The following diagram commutes up to homotopy
\begin{equation*}
\begin{tikzpicture}
\matrix(a)[matrix of math nodes,
row sep=3em, column sep=2.5em,
text height=1.5ex, text depth=0.25ex]
{CW(T_{q_0}^*N, T_{q_1}^*N;H,J) & CW(T_{q_0}^*N, T_{q_1}^*N;H_\epsilon,J) \\
&CM(\mathcal{P}(q_0,q_1), \mathcal{L})\\};
\path[->]
(a-1-1) edge node[above]{$\Phi_{H_s}$}(a-1-2);
\path[->]
(a-1-1) edge node[below]{$\mathcal{F}_{H}$} (a-2-2);
\path[->]
(a-1-2) edge node[right]{$\mathcal{F}_{H_\epsilon}$} (a-2-2);
\end{tikzpicture}
\end{equation*}
\end{proposition}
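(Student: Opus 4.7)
The proof is a standard ``moving-the-interpolation'' chain-homotopy argument: I would construct a 1-parameter family of moduli spaces of Floer half-strips whose boundary realises the difference $\mathcal{F}_{H_\epsilon} - \mathcal{F}_H \circ \Phi_{H_s}$.

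Extend the continuation path by $H_s := H_\epsilon$ for $s \leq 0$ and $H_s := H$ for $s \geq 1$, and for each $R \in [0,\infty)$ set $K^R_s := H_{s+R}$ on $(-\infty,0]$. By construction $K^0 \equiv H_\epsilon$, while for $R \geq 1$ the Hamiltonian $K^R_s$ equals $H_\epsilon$ on $(-\infty,-R]$, transitions through the continuation path on $[-R,-R+1]$, and equals $H$ on $[-R+1,0]$. I would choose the path $H_s$ inside $\mathcal{H}^{rel}_C$, so that every $K^R_s$ is constant on the zero section and, outside $\{\|p\|_g \leq 1+\delta\}$, coincides with the single convex radial Hamiltonian $\frac{d_g(q_0,q_1)}{2}\|p\|_g^2$. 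For $x \in \mathrm{Crit}(\mathcal{A}_{H_\epsilon})$ and $\gamma \in \mathrm{Crit}(\mathcal{L})$, let $\mathcal{M}^R(x,\gamma)$ be the moduli space of Floer half-strips $u:(-\infty,0]\times[0,1]\to T^*N$ for $K^R_s$ with exactly the boundary and asymptotic conditions defining $\mathcal{M}^{\mathcal{F}_{H_\epsilon}}(x,\gamma)$, and form the parametric moduli space $\mathcal{M}(x,\gamma) := \bigsqcup_{R \geq 0}\{R\}\times \mathcal{M}^R(x,\gamma)$.

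After a generic perturbation of $J$ and of the pseudo-gradient, $\mathcal{M}(x,\gamma)$ is a manifold with corners whose rigid components, counted modulo $2$, define a linear map $h : CW(T_{q_0}^*N,T_{q_1}^*N;H_\epsilon,J) \to CM(\mathcal{P}(q_0,q_1),\mathcal{L})$. The boundary of a generic one-dimensional component of $\mathcal{M}(x,\gamma)$ decomposes into four strata. First, the $R=0$ stratum equals $\mathcal{M}^{\mathcal{F}_{H_\epsilon}}(x,\gamma)$ since $K^0 \equiv H_\epsilon$, contributing $\mathcal{F}_{H_\epsilon}(x)$. Second, as $R \to \infty$ the interpolation window recedes to $-\infty$ and Floer/Gromov compactness forces a broken configuration: a full continuation strip on $\mathbb{R}\times[0,1]$ for $H_s$ from $x$ to some $y \in \mathrm{Crit}(\mathcal{A}_H)$, followed by an element of $\mathcal{M}^{\mathcal{F}_H}(y,\gamma)$; together these contribute $(\mathcal{F}_H \circ \Phi_{H_s})(x)$. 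Interior Floer breaking at $s=-\infty$ gives $(h\circ d)(x)$ and Morse breaking at $\gamma$ gives $(\partial \circ h)(x)$. Over $\mathbb{Z}_2$ these four boundary contributions sum to zero, yielding $\partial h + h\, d = \mathcal{F}_{H_\epsilon} + \mathcal{F}_H\circ\Phi_{H_s}$, which is exactly the desired chain homotopy.

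The main technical point is uniform-in-$R$ compactness of $\mathcal{M}^R(x,\gamma)$. The $C^0$ confinement argument from Lemma \ref{Escape} goes through unchanged, because on $\{r\geq 1+\delta\}$ every $K^R_s$ equals the same convex radial Hamiltonian $\frac{d_g(q_0,q_1)}{2}r^2$, so the identities $\lambda\circ J = -dr$ and $\lambda(X_{K^R_s}) \geq 0$ produce an $R$-independent compact set containing the image of every $u\in \mathcal{M}^R(x,\gamma)$. For the energy, the $s$-dependence adds a single term to Lemma \ref{Energy},
\begin{equation*}
E(u) = \mathcal{A}_{H_\epsilon}(x) + K^R_0|_N - \int_{-\infty}^0\int_0^1 (\partial_s K^R_s)(u(s,t))\,dt\,ds,
\end{equation*}
and the last integral is bounded uniformly in $R$ because $\partial_s K^R_s$ is supported in the window $[-R,-R+1]$ with $C^0$-norm equal to $\|\partial_s H_s\|_{C^0}$, a constant independent of $R$, while $u$ takes values in the fixed compact set just produced. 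Exactness of $T^*N$ and of all three Lagrangians excludes sphere and disc bubbling, so Gromov compactness delivers exactly the four-stratum boundary picture described above, completing the argument.
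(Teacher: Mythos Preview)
Your argument is correct and is essentially the same as the paper's proof: both construct the chain homotopy by shifting the interpolation parameter (your $K^R_s = H_{s+R}$ is exactly the paper's $H_{s,\tau}$ with $\tau=R$), form the parametric moduli space of half-strips with the $\mathcal{F}$-type boundary conditions, establish the uniform $C^0$ and energy bounds, and read off the four boundary strata. Your write-up is in fact slightly more explicit on a couple of points (the exact energy identity and the exclusion of bubbling via exactness), but the method and all essential ingredients coincide with the paper.
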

\begin{proof}
For $\tau \in [0, +\infty)$ we set $H_{s,\tau}:=H_{s+\tau}.$ We also set $Z_-:=(-\infty,0] \times [0,1].$
For $x \in Crit(\mathcal{A}_{H})$ we define the moduli space $\mathcal{H}(x,\gamma)$ to be the set of pairs $(\tau,u)$ where $\tau \in [0, +\infty)$ and $u:Z_- \to T^*N$ satisfies the following conditions
\begin{equation*}
    \begin{cases}
    &\frac{\partial u}{\partial s}+J\left(\frac{\partial u}{\partial t}  - X_{H_{s,\tau}}\right)=0, \\
    &u((-\infty,0] \times \{i\}) \subset T_{q_i}^*N, \\
    &u_0(t):=u(0,t) \in N \subset T^*N,\\
    &\lim\limits_{s\to -\infty} u(s,t) = x(t), \quad u_0 \in W^s(\gamma).
    \end{cases}
\end{equation*}
From standard transversality results, it holds that $\mathcal{H}(x, \gamma)$ is a manifold for a generic choice of $J \in \mathcal{J}$. Applying a similar argument as in Lemma \ref{Escape}, we have that elements $u \in \mathcal{H}(x, \gamma)$ cannot escape to infinity. Hence, to prove that such a moduli space admits desired compactification, it is enough to show that the energy $E(u)$ of such $u$ is uniformly bounded in $\tau$. 
\begin{align*}
    E(u)&= \iint_{Z_-} \omega\left(\frac{\partial u}{\partial s}, J \frac{\partial u}{\partial s} \right) ds dt = \iint_{Z_-} \omega\left(\frac{\partial u}{\partial s},\frac{\partial u}{\partial t}  - X_{H_{s,\tau}}\right) ds dt \quad\\
    &= -\int u^* d\lambda + \iint_{Z_-} dH_{s,\tau}\left(\frac{\partial u}{\partial s} \right) ds dt.\\
    &= \int x^* \lambda - \int u_0^* \lambda + \iint_{Z_-} \frac{\partial}{\partial s} (H_{s,\tau} \circ u )ds dt - \iint_{Z_-} \frac{\partial H_{s,\tau}}{ \partial s} \circ u ds dt\\
    &\leq \mathcal{A}_{H}(x) - \mathcal{A}_{H_{\tau}}(u_0)+\iint_{Z_-} \bigg|\frac{\partial H_{s,\tau}}{ \partial s} \circ u \bigg| ds dt\\
    &= \mathcal{A}_{H}(x) - \mathcal{A}_{H_{\tau}}(u_0)+\int_0^1 \int_{-\tau}^{-\tau+1} \bigg|\frac{\partial H_{s,\tau}}{ \partial s} \circ u \bigg| ds dt ,\\
    &\leq \mathcal{A}_{H}(x) + \max_{s\in [0,1]}H_s\vert_N + \max_{s \in [0,1]} \left\| \frac{\partial H_s}{\partial s}\right\|_{C^0}. 
\end{align*}
Define a map $h:CW(T_{q_0}^*N, T_{q_1}^*N;H,J) \to CM(\mathcal{P}(q_0, q_1), \mathcal{L})$ by the equation
$$
h(x)=\sum_{\substack{\gamma \in Crit(\mathcal{L}) \\ dim \mathcal{H}(x, \gamma) = 0}} \#_2 \mathcal{H}(x,\gamma) \gamma.
$$
The components of the boundary of the compactified one-dimensional moduli space  $\mathcal{H}(y, \gamma)$ are of three different types. One is the fiber over $\tau=0$. This fiber can be identified with $\mathcal{M}^{\mathcal{F}_{H}}(y, \gamma)$ which is used to define the map $\mathcal{F}_{H}(y)$. The second type of component  of $\partial \mathcal{H}(y, \gamma)$ appears when $\tau \to +\infty$ and by standard compactness and gluing results is equal to
\begin{equation}
    \bigcup_{\substack{x \in Crit(\mathcal{A}_{H_\epsilon}) \\ dim \mathcal{M}(y, x ; H_s) = 0}} \mathcal{M}(y, x ; H_s) \times \mathcal{M}^{\mathcal{F}_{H_\epsilon}}(x, \gamma)\label{ContCompF}\end{equation}
 The count of the elements from the union (\ref{ContCompF}) gives $\mathcal{F}_{H_\epsilon} ( \Phi_{H_s} (y) )$. The third type of component comes from finite values of $\tau>0$, and it is equal to
\begin{align}
    \bigcup_{\substack{z \in Crit(\mathcal{A}_{H}) \\ dim \mathcal{M}(y, z) = 0}} &\mathcal{M}(y, z) \times \mathcal{H}(z, \gamma) \label{dCompH}\\
    \cup \bigcup _{\substack{\Tilde{\gamma} \in Crit(\mathcal{L}) \\ dim \mathcal{M}(\Tilde{\gamma}, \gamma) = 0}} &\mathcal{H}(y, \Tilde{\gamma}) \times \mathcal{M}(\Tilde{\gamma},\gamma). \label{HCompd}
\end{align}
 The union (\ref{dCompH}) induces $ h(dy)$ and the union (\ref{HCompd}) induces $\partial h(y)$. Hence we get
$$
\mathcal{F}_{H_\epsilon} \circ \Phi_{H_s} - \mathcal{F}_{H}= h \circ d - \partial \circ h.
$$
\end{proof}

\begin{proposition}\label{ThetaG}
If $\Bar{\gamma} \in \mathcal{P}(q_0, q_1)$ is the unique length minimizing geodesic from $q_0$ to $q_1$ and if $\Bar{x}$ is the unique Hamiltonian chord such that $\pi \circ \Bar{x} = \Bar{\gamma}$ then $\mathcal{F}_H([\Bar{x}])= [\pi \circ \Bar{x}].$
\end{proposition}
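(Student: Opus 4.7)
My plan is to prove the mirror identity $\Theta(\bar\gamma) = \bar x$ at the chain level, and then conclude by combining it with $\Theta \circ \mathcal{F}_H \simeq \mathrm{Id}$ from Theorem \ref{Isomorphism} and the injectivity of $\Theta$ on homology. This sidesteps directly constructing the half-strips that define $\mathcal{F}_H(\bar x)$, which look much harder to analyze.

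First I compute the relevant actions. By Lemma \ref{A<L} at the critical point $\bar x$, one has $\mathcal{A}_H(\bar x) = \mathcal{L}(\bar\gamma)$. Parametrizing $\bar\gamma$ at constant speed $d := d_g(q_0,q_1)$ and using that the dual Lagrangian of $H$ is $L(v) = \|v\|_g^2/(2d)$, one obtains $\mathcal{L}(\bar\gamma) = d/2$. For any other Hamiltonian chord $x$ of $H$, the projection $\pi \circ x$ is a geodesic from $q_0$ to $q_1$ of length $l > d$ (by uniqueness of the minimizer), so $\mathcal{A}_H(x) = l^2/(2d) > d/2$. I then apply the energy inequality $E(u) \leq \mathcal{L}(\bar\gamma) - \mathcal{A}_H(x)$ for $u \in \mathcal{M}^\Theta(\bar\gamma, x)$, which is noted in the excerpt right after Lemma \ref{A<L}. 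Combined with $E(u) \geq 0$, this forces $\mathcal{A}_H(x) \leq d/2$, so $x = \bar x$ and $E(u) = 0$. Since $E$ is the $L^2$-norm of $\partial_s u$, the vanishing implies $u$ is $s$-independent, and the asymptotic condition then yields $u(s,t) = \bar x(t)$. This constant half-strip satisfies the remaining boundary condition $\pi \circ u(0,\cdot) \in W^u(\bar\gamma)$: as $\bar\gamma$ is a strict length-minimizer, it has Morse index $0$, so $W^u(\bar\gamma) = \{\bar\gamma\}$, and indeed $\pi \circ \bar x = \bar\gamma$.

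Under standard transversality, then, $\mathcal{M}^\Theta(\bar\gamma, \bar x) = \{u \equiv \bar x\}$ and $\mathcal{M}^\Theta(\bar\gamma, x) = \emptyset$ for every $x \neq \bar x$, so $\Theta(\bar\gamma) = \bar x$ already at the chain level. Passing to homology, $\Theta([\bar\gamma]) = [\bar x]$; combining with $\Theta([\mathcal{F}_H(\bar x)]) = [\bar x]$ from Theorem \ref{Isomorphism} and the injectivity of $\Theta$ on homology (it is a quasi-isomorphism), we conclude $\mathcal{F}_H([\bar x]) = [\bar\gamma] = [\pi \circ \bar x]$. The main obstacle I expect is verifying that the constant strip $u \equiv \bar x$ is a regular (transversely cut-out) point of $\mathcal{M}^\Theta(\bar\gamma, \bar x)$, so that the $\mathbb{Z}_2$-count is exactly $1$; this requires surjectivity of the linearized Floer operator at this action-minimal configuration, which should follow from a generic perturbation of $J$ within $\mathcal{J}$ compatible with the Morse--Smale condition for the pseudo-gradient on $\mathcal{P}(q_0,q_1)$.
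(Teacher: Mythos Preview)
Your proposal is correct and follows essentially the same approach as the paper: first show $\Theta(\bar\gamma)=\bar x$ at the chain level via the energy inequality $E(u)\leq\mathcal{L}(\bar\gamma)-\mathcal{A}_H(x)$, which forces $x=\bar x$ and leaves only the constant half-strip, then invoke $\Theta\circ\mathcal{F}_H=\mathrm{Id}$ on homology together with the injectivity of $\Theta$. The only cosmetic difference is that the paper certifies that $\bar x$ is a cycle directly from the fact that it has minimal action (and the differential decreases action), whereas you deduce it from $\bar\gamma$ having Morse index~$0$ and $\Theta$ being a chain map; both are valid, and your regularity concern about the constant strip is likewise left implicit in the paper.
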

\begin{proof}
For $x\in Crti(\mathcal{A}_H)$, $\gamma \in Crit(\mathcal{L})$, and $u \in \mathcal{M}^{\Theta}(\gamma,x)$ we have $E(u)\leq \mathcal{L}(\gamma) - \mathcal{A}_H (x)$.
This energy inequality implies that $\mathcal{M}^{\Theta}(\pi \circ x,x)$ contains only the constant solution since $\mathcal{A}_H(x) = \mathcal{L}(\pi \circ x)$, hence $\#_2 \mathcal{M}^{\Theta}(\pi \circ x,x)=1.$ Further,  when $\mathcal{M}^{\Theta}(\Bar{\gamma},x)$ is zero dimensional and if $\mathcal{A}_H (x) \geq \mathcal{L}(\Bar{\gamma}) \text{ where } \Bar{\gamma} \neq \pi \circ x$ we have $\#_2 \mathcal{M}^{\Theta}(\gamma, \Bar{x}) = 0.$ Hence we get  for $\Bar{\gamma} = \pi \circ \Bar{x}$, which is a unique length minimizing geodesic, that $\Theta(\pi \circ \Bar{x}) = \Bar{x}.$ This is true since for $\pi \circ x \neq \Bar{\gamma}$ we have $\mathcal{A}_H(x)=\mathcal{L}(\pi \circ x) > \mathcal{L}(\Bar{\gamma})$.

We know from the Theorem \ref{Isomorphism} that $\Theta \circ \mathcal{F} = Id$ on homology. For every cycle $z \in CW(T_{q_0}^*N, T_{q_1}^*N;H,J)$, we get $\Theta ( \mathcal{F}_H ( [z]) ) = [z].$ The chord $\Bar{x}$ has the smallest action, since the differential drops the action we have that $\Bar{x}$ is a cycle. Because $\Theta$ is an isomorphism in homology, we have $\mathcal{F}_H([\Bar{x}])= [\pi \circ \Bar{x}].$

\end{proof}
The path $H_s$ can be chosen in a way such that the continuation map $\Phi_{H_s}$ satisfies $\Phi_{H_s}(\Bar{x}) = x_\epsilon.$
Indeed, take a smooth function $\chi: \mathbb{R} \to [0,1]$ such that $\chi'\geq 0$, $\chi(s)=0$ for $s \leq 0$, and $\chi(s)=1$ for $s \geq 1$, and set 
$$
H_s=(1-\chi)H + \chi H_{\epsilon}.
$$
From the proof of the Proposition \ref{Homotopy}, we have that energy of $u \in \mathcal{M}(\Bar{x}, y; H_s)$ satisfies
\begin{equation}\label{ContinuationEnergy}
    E(u)=\mathcal{A}_{H}(\Bar{x}) - \mathcal{A}_{H_\epsilon}(y) + \int\limits_{0}^{1}\int\limits_{0}^{1}  \chi'(s)(H -H_{\epsilon}) \circ u ds dt.
\end{equation}
The inequality $H\leq H_{\epsilon}$ implies $\mathcal{A}_{H_\epsilon}(y) \leq \mathcal{A}_{H}(\Bar{x})$. As we have seen, it follows from Remark \ref{ActionH=f} that if $y\neq x_{\epsilon}$ then $\mathcal{A}_{H_\epsilon}(y) \geq  d (1+\delta)^2 /2$ and $\mathcal{A}_{H}(\Bar{x}) = d / 2$, 
which means that for $y\neq \Bar{x}$ we have $\mathcal{M}(\Bar{x}, y; H_s) = \emptyset$. Equality $\Phi_{H_s}(\Bar{x})= x_\epsilon$ follows from $[\Bar{x}] \neq 0 \in HW(T_{q_0}^*N, T_{q_1}^*N;H,J)$.   

Now, let us show that $[\Bar{x}] \neq 0$.  From \cite[Section 2.4]{ASch05}  we have that the homology with $\mathbb{Z}_2$ coefficients of the sublevel set $\mathcal{L}^a:=\{ \gamma \in \mathcal{P}(q_0,q_1) \mid \mathcal{L}(\gamma)<a \},$  is isomorphic to the Morse homology $HM^a(\mathcal{P}(q_0, q_1))$. Here, $HM^a(\mathcal{P}(q_0, q_1))$ is the homology of the subcomplex $CM^a(\mathcal{P}(q_0, q_1)) \leq CM(\mathcal{P}(q_0, q_1))$ generated by the elements $\gamma \in crit(\mathcal{L})$ with $\mathcal{L}(\gamma) < a$. This isomorphism fits in the following commutative diagram

\begin{equation*}
\begin{tikzpicture}
\matrix(a)[matrix of math nodes,
row sep=3em, column sep=2.5em,
text height=1.5ex, text depth=0.25ex]
{HM^a(\mathcal{P}(q_0,q_1)) &  H(\mathcal{L}^a;\mathbb{Z}_2)\\
 HM(\mathcal{P}(q_0,q_1))& H(\mathcal{P}(q_0,q_1);\mathbb{Z}_2)\\};
\path[->]
(a-1-1) edge (a-1-2);
\path[->]
(a-1-1) edge node[left]{$i$} (a-2-1);
\path[->]
(a-1-2) edge node[right]{$i$} (a-2-2);
\path[->] (a-2-1) edge (a-2-2); 
\end{tikzpicture}
\end{equation*}
where the map represented by the left vertical is induced by the inclusion of the chain subcomplex $i:CM^a(\mathcal{P}(q_0,q_1)) \to CM(\mathcal{P}(q_0,q_1))$, and $i$ on the right is induced by the inclusion $i:\mathcal{L}^a \to \mathcal{P}(q_0,q_1)$. Now by taking $a:=d(1+\delta)$ we get that $[\Bar{\gamma}] \neq 0 \in HM^a(\mathcal{P}(q_0,q_1))$ corresponds to the point class of connected component containing $\Bar{\gamma}$ in $H(\mathcal{L}^a; \mathbb{Z}_2)$, since $CM^a(\mathcal{P}(q_0,q_1))$ is generated with $\Bar{\gamma}$. Since point class is preserved under the inclusion $i:H(\mathcal{L}^a; \mathbb{Z}_2) \to H(\mathcal{P}(q_0,q_1);\mathbb{Z}_2)$, we have $[\Bar{\gamma}]\neq 0 \in HM(\mathcal{P}(q_0,q_1))$. The map $\Theta$ from Theorem \ref{Isomorphism} is an isomorphism and $\Theta(\Bar{\gamma})=\Bar{x}$, hence it follows that the element $\Bar{x}$ is non-zero in homology.
 
\begin{corollary}\label{ExistenceJhol}
The moduli space $\mathcal{M}^{\mathcal{F}_{H_\epsilon}}(x_\epsilon,\pi\circ x_\epsilon)$ is non-empty. 
\end{corollary}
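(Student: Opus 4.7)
The plan is a short diagram chase combining the two propositions of this section, followed by an action-filtration argument. First I would verify that $x_\epsilon$ is a cycle in $CW(T^*_{q_0}N, T^*_{q_1}N; H_\epsilon, J)$. By construction, $x_\epsilon$ is the unique Hamiltonian chord of $H_\epsilon$ corresponding to the length-minimizing geodesic $\bar{\gamma}$, with $\mathcal{A}_{H_\epsilon}(x_\epsilon) = d_g(q_0,q_1)/2$. Every other chord of $H_\epsilon$ corresponds to a geodesic of length at least $d_g(q_0,q_1)(1+\delta)$; since $f_\epsilon(r) = d_g(q_0,q_1)r^2/2$ for $r \geq 1+\delta$, the remark preceding Lemma \ref{Energy} computes its action to be at least $d_g(q_0,q_1)(1+\delta)^2/2 > d_g(q_0,q_1)/2$. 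Because the Floer differential strictly decreases action, $d x_\epsilon = 0$.

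Next, I would apply the chain homotopy of Proposition \ref{Homotopy} to the cycle $x_\epsilon$. Using the identity $\Phi_{H_s}(x_\epsilon) = \bar{x}$ established in the paragraphs following Proposition \ref{Homotopy}, this yields
$$\mathcal{F}_{H_\epsilon}(x_\epsilon) - \mathcal{F}_H(\bar{x}) = \partial h(x_\epsilon),$$
so that $\mathcal{F}_{H_\epsilon}(x_\epsilon)$ is chain-homologous to $\mathcal{F}_H(\bar{x})$. By Proposition \ref{ThetaG} the latter represents $[\pi \circ \bar{x}] = [\bar{\gamma}]$, and this class was shown to be non-zero in $HM(\mathcal{P}(q_0,q_1))$ via the commutative diagram relating sublevel-set homology of $\mathcal{L}$ with Morse homology. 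Therefore $[\mathcal{F}_{H_\epsilon}(x_\epsilon)] = [\bar{\gamma}] \neq 0$, which already implies that the chain $\mathcal{F}_{H_\epsilon}(x_\epsilon)$ is non-zero in $CM(\mathcal{P}(q_0,q_1))$, and hence $\mathcal{M}^{\mathcal{F}_{H_\epsilon}}(x_\epsilon, \gamma) \neq \emptyset$ for some $\gamma \in \mathrm{Crit}(\mathcal{L})$.

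To pin down $\gamma = \pi \circ x_\epsilon = \bar{\gamma}$ specifically, I would invoke the action filtration on the Morse complex. Since $\bar{\gamma}$ is the unique critical point of $\mathcal{L}$ with action strictly less than $d_g(q_0,q_1)(1+\delta)^2/2$, for any threshold $a$ strictly between $d_g(q_0,q_1)/2$ and $d_g(q_0,q_1)(1+\delta)^2/2$ the sub-complex $CM^a$ is generated by $\bar{\gamma}$ alone. One then checks that $\mathcal{F}_{H_\epsilon}(x_\epsilon)$ lands in $CM^a$: this uses the energy identity $E(u) = \mathcal{A}_{H_\epsilon}(x_\epsilon) + H_\epsilon\vert_N = d_g(q_0,q_1)+\epsilon$ for $u \in \mathcal{M}^{\mathcal{F}_{H_\epsilon}}(x_\epsilon,\gamma)$, the descending property $\mathcal{L}(\gamma) \leq \mathcal{L}(u_0)$ along the pseudo-gradient flow, and the fact that $H_\epsilon$ is constant on $D^*N$, which makes the Floer equation reduce to the Cauchy--Riemann equation there, allowing $\lvert u_0'\rvert$ to be controlled by $\lvert \partial_s u\rvert$ through the boundary condition $u_0 \subset N$. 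Once $\mathcal{F}_{H_\epsilon}(x_\epsilon) \in CM^a$, non-vanishing in homology forces $\mathcal{F}_{H_\epsilon}(x_\epsilon) = \bar{\gamma}$, so $\mathcal{M}^{\mathcal{F}_{H_\epsilon}}(x_\epsilon,\bar{\gamma}) \neq \emptyset$.

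The main obstacle I expect is the last step: extracting an upper bound on $\mathcal{L}(u_0)$ from the energy identity and the boundary data. The Fenchel-dual inequality of Lemma \ref{A<L} naturally provides a \emph{lower} bound on $\mathcal{L}(u_0)$ in terms of $\mathcal{A}_{H_\epsilon}(u_0)$, not an upper one, so the argument must leverage the Cauchy--Riemann equation on $D^*N$ together with the Lagrangian condition $u_0(t) \in N$ to trade the transverse derivative $\partial_s u$ along $\{0\}\times [0,1]$ for $\partial_t u_0$, and then bound the resulting quantity by $E(u)$ via a Cauchy--Schwarz type estimate. Choosing $\epsilon$ sufficiently small so that the resulting bound falls below $d_g(q_0,q_1)(1+\delta)^2/2$ completes the argument.
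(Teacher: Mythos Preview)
Your first two paragraphs reproduce exactly what the paper does: its entire proof is the single chain of equalities
\[
\mathcal{F}_{H_\epsilon}([x_\epsilon]) \;=\; \mathcal{F}_H(\Phi_{H_s}([x_\epsilon])) \;=\; \mathcal{F}_H([\bar{x}]) \;=\; [\pi\circ\bar{x}] \;=\; [\pi\circ x_\epsilon]\;\neq\;0,
\]
and stops there. In particular the paper does \emph{not} carry out your third step; it never argues that the generator $\bar\gamma$ must occur with nonzero coefficient in the chain $\mathcal{F}_{H_\epsilon}(x_\epsilon)$, only that the latter is nonzero in homology.

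Your worry about that third step is well founded: the argument you sketch for bounding $\mathcal{L}(u_0)$ above by the energy does not close. On $\{0\}\times[0,1]$ the unperturbed Cauchy--Riemann equation indeed gives $\lvert u_0'(t)\rvert = \lvert\partial_s u(0,t)\rvert$, but $\int_0^1 \lvert\partial_s u(0,t)\rvert^2\,dt$ is a boundary trace and is not controlled by the bulk energy $\iint \lvert\partial_s u\rvert^2\,ds\,dt$ without additional regularity input; no Cauchy--Schwarz manoeuvre bridges that gap. So the action-filtration paragraph, as written, has a genuine hole.

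That hole is, however, irrelevant for the paper's purposes. In Lemma~\ref{MainEnergy} and in the proof of Proposition~\ref{MainProp} the only features of $u$ that are used are the boundary conditions on $N$, $T^*_{q_0}N$, $T^*_{q_1}N$ and the energy identity $E(u)=\mathcal{A}_{H_\epsilon}(x_\epsilon)+H_\epsilon\vert_N=d_g(q_0,q_1)+\epsilon$, which by equation~(\ref{EnergyF}) holds for \emph{every} $u\in\mathcal{M}^{\mathcal{F}_{H_\epsilon}}(x_\epsilon,\gamma)$, independently of $\gamma$. Thus the homological nonvanishing you establish in your second paragraph --- which is precisely what the paper proves --- already yields a curve with all the required properties; pinning down $\gamma=\bar\gamma$ is unnecessary. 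You should simply drop the last two paragraphs.
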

\begin{proof}
 From the Propositions \ref{Homotopy} and \ref{ThetaG} we have
 $$
 \mathcal{F}_{H_\epsilon} ([x_\epsilon]) = \mathcal{F}_{H_\epsilon} (\Phi_{H_s} ([\Bar{x}])) =\mathcal{F}_H([\Bar{x}])= [\pi \circ \Bar{x}]=[\pi \circ x_\epsilon]\neq 0.
 $$

\end{proof} 
\begin{lemma} \label{MainEnergy}
For $u \in \mathcal{M}^{\mathcal{F}_{H_\epsilon}}(x_\epsilon,\pi\circ x_\epsilon)$ we have $E(u)= d_g (q_0,q_1)+\epsilon/2.$
\end{lemma}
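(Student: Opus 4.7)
The plan is to apply the energy identity (\ref{EnergyF}) from the definition of $\mathcal{F}_{H_\epsilon}$, together with the explicit computation of $\mathcal{A}_{H_\epsilon}(x_\epsilon)$ using Remark \ref{ActionH=f} and the defining conditions on $f_\epsilon$. No compactness or Floer machinery is needed here; this is essentially a bookkeeping computation once the action values at the endpoints of $u$ are understood.

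First I would recall that (\ref{EnergyF}) gives
$$E(u) = \mathcal{A}_{H_\epsilon}(x_\epsilon) - \mathcal{A}_{H_\epsilon}(u_0) = \mathcal{A}_{H_\epsilon}(x_\epsilon) + H_\epsilon\vert_N,$$
since $u_0$ is a path in the zero section $N$, on which $\lambda$ vanishes and $H_\epsilon$ is constant (as $H_\epsilon \in \mathcal{H}_C^{rel}$). Condition~1 in the definition of $f_\epsilon$ then gives $H_\epsilon\vert_N = f_\epsilon(0) = d_g(q_0,q_1)/2 + \epsilon$.

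Next I would identify the level at which $x_\epsilon$ lives. Since $f_\epsilon$ is constant on $[0,1]$ and satisfies $f_\epsilon(r) = d_g(q_0,q_1)r^2/2$ for $r\ge 1+\delta$, the derivative $f_\epsilon'$ runs continuously from $0$ at $r=1$ up to $d_g(q_0,q_1)(1+\delta)$ at $r=1+\delta$, and by the strict convexity from condition~2 there is a unique $r_0 \in (1,1+\delta)$ with $f_\epsilon'(r_0) = d_g(q_0,q_1)$. The flow of $X_{H_\epsilon}$ on the level $\{\|p\|_g = r_0\}$ is the cogeodesic flow reparametrized to cover the minimizing geodesic $\bar\gamma$ in unit time, so $x_\epsilon$ is precisely the Hamiltonian chord lying at this level. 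Since $r_0 > 1$, we may apply Remark \ref{ActionH=f} and then condition~3 to get
$$\mathcal{A}_{H_\epsilon}(x_\epsilon) = r_0 f_\epsilon'(r_0) - f_\epsilon(r_0) = \frac{d_g(q_0,q_1)}{2}.$$

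Combining the two computations yields
$$E(u) = \frac{d_g(q_0,q_1)}{2} + \frac{d_g(q_0,q_1)}{2} + \epsilon = d_g(q_0,q_1) + \epsilon,$$
as claimed. There is no real obstacle here; the only subtlety is checking that $r_0$ lies in the strictly convex region so that Remark \ref{ActionH=f} applies and that condition~3 is the correct rewriting of the tangency of $f_\epsilon$ to the line through the chord's action. Both are built into the construction of $f_\epsilon$.
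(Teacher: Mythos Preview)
Your proof is correct and follows essentially the same approach as the paper: apply the energy identity (\ref{EnergyF}), compute $H_\epsilon\vert_N$ from condition~1 on $f_\epsilon$, and compute $\mathcal{A}_{H_\epsilon}(x_\epsilon)$ via Remark~\ref{ActionH=f} together with condition~3. You have simply spelled out a bit more carefully why $x_\epsilon$ sits at the level $r_0$ where $f_\epsilon'(r_0)=d_g(q_0,q_1)$, which the paper leaves implicit.
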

\begin{proof}
Intuitively, from Figure (\ref{J-hol}) it follows that area of $u$ is roughly $d_g(q_0,q_1).$ More precisely, from Remark (\ref{ActionH=f}) we have $\mathcal{A}_{H_\epsilon}(x_\epsilon)=r_{\epsilon}f_\epsilon'(r_{\epsilon})-f_\epsilon(r_{\epsilon}).$ The choice of the function $f_\epsilon$ was such that $r_{\epsilon}f_\epsilon'(r_{\epsilon})-f_\epsilon(r_{\epsilon})=d_g (q_0,q_1) / 2 - \epsilon/2$.
From the equation (\ref{EnergyF}) we have
$$
E(u)=\mathcal{A}_{H_\epsilon}(x_\epsilon) + H\vert_N = d_g (q_0,q_1)+\epsilon /2,
$$
since $H\vert_N= d_g (q_0,q_1) / 2 + \epsilon$.
\end{proof}

\subsection{Proof of Proposition \ref{MainProp}} \label{MainPropProof}

The following theorem from Riemannian geometry guarantees that generically, every two points on $(N,g)$ are connected by a unique length minimizing geodesic. 

\begin{proposition}\cite[Theorem 18.1.]{Mi63}\cite[III. Lemma 4.4]{Sa96}  \label{LebZero}
Let $(N,g)$ be a closed Riemannian manifold. For $p \in N$ define the set
$\mathcal{I}_p:= \{ q \in N \mid \text{There is a unique length minimizing geodesic from } p  \text{ to } q \}.$
Set $\mathcal{I}_p$ is open and dense. Moreover, its complement $\mathcal{C}_p=N\setminus \mathcal{I}_p$ has Lebesgue measure zero.
\end{proposition}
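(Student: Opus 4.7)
The plan is to realize the exceptional set $\mathcal{C}_p$ as (a subset of) the image under $\exp_p$ of a low-dimensional subset of $T_p N$, so that the measure-zero statement, and with it density, both drop out of a single Hausdorff-dimension argument; openness of $\mathcal{I}_p$ I would handle separately by a compactness argument on minimizing geodesics.

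First I would set up the usual exponential-map picture. Since $N$ is compact, $\exp_p : T_pN \to N$ is surjective, and minimizing geodesics from $p$ to $q$ correspond bijectively to vectors $v \in T_pN$ with $\|v\|_g = d_g(p,q)$ and $\exp_p(v)=q$. Define the cut-value function $\mu : S_pN \to (0,\infty)$ on the unit sphere of $T_pN$ by $\mu(w) = \sup\{t > 0 : d_g(p,\exp_p(tw))=t\}$; standard Riemannian geometry gives that $\mu$ is continuous and bounded above by $\mathrm{diam}(N,g)$. The tangent cut locus $\widetilde{\mathrm{Cut}}_p := \{\mu(w)\,w : w \in S_pN\}$ is then the graph of a continuous function over the compact sphere $S_pN$, hence an $(n-1)$-dimensional Lipschitz subset of $T_pN$. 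The classical fact that any point with at least two minimizing geodesics from $p$ is a cut point of $p$ yields the inclusion $\mathcal{C}_p \subset \mathrm{Cut}_p := \exp_p(\widetilde{\mathrm{Cut}}_p)$.

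For the measure-zero conclusion, I would invoke that the smooth (hence locally Lipschitz) map $\exp_p$ sends an $(n-1)$-dimensional Lipschitz set to a set of vanishing $n$-dimensional Hausdorff measure in $N$. Since the Riemannian volume agrees with $n$-dimensional Hausdorff measure up to a smooth density, this implies that $\mathrm{Cut}_p$, and a fortiori $\mathcal{C}_p$, has Lebesgue measure zero in $N$. Density of $\mathcal{I}_p$ is then automatic, since a measure-zero subset of a Riemannian manifold can contain no nonempty open set.

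For openness of $\mathcal{I}_p$, I would show that $\mathcal{C}_p$ is closed. Given $q_n \to q$ with $q_n \in \mathcal{C}_p$, pick distinct initial minimizing directions $v_n^1 \neq v_n^2$ at each $q_n$ and, after passing to a subsequence, obtain limits $v^1, v^2 \in T_pN$ with $\exp_p(v^i)=q$ and $\|v^i\|_g = d_g(p,q)$. If $v^1 \neq v^2$ then $q \in \mathcal{C}_p$ immediately; otherwise $v^1 = v^2 =: v$, and the fact that $\exp_p$ sends the two distinct sequences to the common image $q_n$ forces $v$ to be a critical point of $\exp_p$, so that $q$ is conjugate to $p$ along $\gamma_v$. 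A Jacobi-field analysis along $\gamma_v$, exactly as in Milnor's Theorem 18.1, then produces an honest second minimizing geodesic from $p$ to $q$, placing $q$ in $\mathcal{C}_p$. This final Jacobi-field step is the main obstacle, since it converts an infinitesimal splitting of minimizers (two close but distinct initial directions mapped to the same endpoint) into a genuine pair of minimizers at the limit; rather than reprove it I would cite the classical treatments \cite{Mi63, Sa96}.
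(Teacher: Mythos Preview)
The paper does not prove this proposition; it is stated with citations to Milnor and Sakai and then used as a black box, so there is no argument to compare against and I evaluate yours on its own terms. Your route to the measure-zero statement, and hence density, is sound: the tangent cut locus is the radial graph of the continuous cut-time function $\mu$ over $S_pN$ and therefore has $n$-dimensional Lebesgue measure zero in $T_pN$ (one small slip: $\mu$ is in general only continuous, not Lipschitz, but the null-set conclusion survives by Fubini in polar coordinates), and the smooth map $\exp_p$ sends null sets to null sets, so $\mathrm{Cut}_p$ and a fortiori $\mathcal{C}_p\subset\mathrm{Cut}_p$ are null.

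The openness argument, however, has a genuine gap, precisely where you flag ``the main obstacle.'' From $v_n^1\neq v_n^2$, $\exp_p(v_n^i)=q_n$, and $v_n^i\to v$ you correctly infer that $D\exp_p(v)$ is singular, so $q=\exp_p(v)$ is conjugate to $p$ along $\gamma_v$. But a Jacobi field vanishing at the endpoint does \emph{not} manufacture a second minimizing geodesic from $p$ to $q$; it only shows that $\gamma_v$ ceases to minimize beyond $q$. Neither Milnor's Theorem~18.1 nor the cited lemma in Sakai supplies this step, and in fact the step is false: on a triaxial ellipsoid the cut locus from a generic point is an arc whose interior points have exactly two minimizers and whose endpoints are first conjugate points reached by a \emph{unique} minimizing geodesic. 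Those endpoints lie in $\mathcal{I}_p$ as the paper defines it, yet they are limits of points in $\mathcal{C}_p$, so $\mathcal{I}_p$ need not be open. What the references actually give is that $N\setminus\mathrm{Cut}_p$ is open, dense, and of full measure; this is a subset of $\mathcal{I}_p$, and together with your measure-zero argument it already suffices for the perturbation in the paper's subsequent corollary (one is intersecting with another full-measure set inside a small ball). So you should either replace $\mathcal{I}_p$ by $N\setminus\mathrm{Cut}_p$ in the openness clause, or simply drop the attempt to prove openness of $\mathcal{I}_p$ as literally stated.
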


As a corollary, we can construct a small Hamiltonian perturbation in $D^*N$, which preserves the zero section $N$, fixes the image of $e_0:B^{2n}(r_0) \to D^*N$ restricted to a slightly smaller ball, and moves the image of $e_1:B^{2n}(r_1) \to D^*N$ so that centers $q_i=e_i(0)$ are connected by a unique length minimizing geodesic.
\begin{corollary} \label{UniqueGeod}
Let $e_i:B^{2n}(r_i) \to D^*{N}, \quad i\in \{0,1\}$ be two relative, disjoint, symplectic embeddings. For every $\epsilon>0$ there exist a Hamiltonian $G:T^*N \to \mathbb{R}$ such that 
\begin{itemize}
    \item[1.)] $\phi^1_G \vert_{e_0(B^{2n}(r_0-\epsilon) )} = Id$,
    \item[2.)] $e_0(B^{2n}(r_0-\epsilon))\cap \phi^1_G(e_1(B^{2n}(r_1-\epsilon) )) = \emptyset$,
    \item[3.)] there is a unique length minimizing geodesic from $q_0:=  e_0 (0) $ to $q_1:= \phi_G^1 (e_1 (0) )$,
    \item[4.)] $\phi^1_H(T_{q_0}^* N) \pitchfork T^*_{q_1}N$, where $H(p)=d_g(q_0, p_1) \|p\|^2_g /2$, and $p_1=e_1(0)$,
    \item[5.)] If $e_1^{-1}(T^*_{p_1}N)=\{0\} \times B^n(r_1),$ where $p_1=e_1(0)$, then $G$ can be chosen so that
    $$(\phi^1_G \circ e_1)^{-1} (T^*_{q_1}(N)) \cap B^{2n}(r_1-\epsilon) = \{0\} \times B^n(r_1-\epsilon).$$
\end{itemize} 
\end{corollary}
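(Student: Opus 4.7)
The plan is to produce $G$ as a cutoff of the cotangent-lift Hamiltonian $G_0(q,p) := p(V(q))$ of a small vector field $V$ on $N$ that drags $p_1 := e_1(0)$ to a carefully chosen $q_1$. Set $K_i := e_i(\overline{B^{2n}(r_i - \epsilon)})$; these are disjoint compact subsets of $D^*N$, and in particular $p_1 \notin K_0$.

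First I would pick $q_1$. The set of points of $N$ having a unique length-minimizing geodesic from $q_0$ is open and of full measure by Proposition \ref{LebZero}. The set of $q_1$ where condition (4) fails is contained in the critical values of $v \mapsto \pi \circ \phi_H^1(v)$ on $T_{q_0}^*N$, namely the image of the conjugate locus of $q_0$ under the co-geodesic flow, hence is of measure zero by Sard. Thus in every neighborhood of $p_1$ in $N$ I can choose $q_1$ satisfying (3) and (4). Using that $p_1 \notin K_0 \cap N$ (a closed subset of $N$), I further demand that $q_1$ lie in a small open neighborhood of $p_1$ disjoint from $K_0 \cap N$, and pick a compactly supported vector field $V$ on $N$, supported in a thin tube around a short path from $p_1$ to $q_1$ that avoids $K_0 \cap N$, with $\phi_V^1(p_1) = q_1$.

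The Hamiltonian flow of $G_0$ is the canonical cotangent lift of $\phi_V^t$, so it preserves $N$ and carries each fiber $T_q^*N$ onto $T_{\phi_V^t(q)}^*N$. Consider the compact trajectory set
\[
\mathcal{T} := \{\phi_{G_0}^t(z) \mid z \in K_1,\ t \in [0,1]\}.
\]
At $t=0$ this equals $K_1$, which is disjoint from $K_0$. Shrinking $V$ (and sliding $q_1$ closer to $p_1$ within the dense good set) shrinks $\mathcal{T}$ into any prescribed neighborhood of $K_1$ by continuous dependence of flows on parameters, hence makes $\mathcal{T}$ disjoint from $K_0$. Fix such $V$, choose a precompact open set $U_1 \supset \mathcal{T}$ with $\overline{U_1} \cap K_0 = \emptyset$, and pick a bump $\beta \colon T^*N \to [0,1]$ with $\beta \equiv 1$ on a neighborhood of $\mathcal{T}$ and $\mathrm{supp}(\beta) \subset U_1$. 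Set $G := \beta \cdot G_0$.

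It remains to verify (1)--(5). Property (1) holds because $\mathrm{supp}(G) \subset U_1$ is disjoint from $K_0$, so $\phi_G^t$ is the identity on $K_0$. Since $G = G_0$ on a neighborhood of the $\phi_{G_0}$-invariant set $\mathcal{T}$, the flows $\phi_G^t$ and $\phi_{G_0}^t$ coincide on $K_1$ for $t \in [0,1]$; this immediately gives $q_1 = \phi_G^1(p_1)$ (property (3), with (4) built into the choice of $q_1$) and $\phi_G^1(K_1) \subset \mathcal{T} \subset U_1$, disjoint from $K_0$, which is (2). For (5), $\phi_{G_0}^1$ maps $T_{p_1}^*N$ onto $T_{q_1}^*N$ bijectively, hence
\[
(\phi_G^1 \circ e_1)^{-1}(T_{q_1}^*N) \cap B^{2n}(r_1 - \epsilon) = e_1^{-1}(T_{p_1}^*N) \cap B^{2n}(r_1 - \epsilon) = \{0\} \times B^n(r_1 - \epsilon).
\]
The main obstacle is the joint choice of $V$: it must be large enough to carry $p_1$ to a generic $q_1$ satisfying (3) and (4), yet small enough that $\mathcal{T}$ stays off $K_0$ and that the cutoff $\beta$ exists with the prescribed behavior. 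This is arranged by first shrinking the neighborhood in which $q_1$ is selected and then taking $V$ correspondingly small.
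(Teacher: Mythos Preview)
Your proof is correct and takes essentially the same approach as the paper: both build $G$ from the cotangent lift of a small vector field on $N$ dragging $p_1$ to a nearby point $q_1$ chosen via Proposition~\ref{LebZero} and Sard's theorem. The paper cuts off by a bump $\rho$ on $N$ (so its $G$ remains a global cotangent lift), whereas you cut off on $T^*N$ and then check that the two flows agree on $K_1$; one small slip is that $\mathcal{T}$ is not literally $\phi_{G_0}$-invariant, but your argument only needs that $G_0$-trajectories starting in $K_1$ stay in $\mathcal{T}$ for $t\in[0,1]$, which holds by construction.
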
 

\begin{proof}
Fix $\epsilon>0$ such that $B(p_1;2\epsilon):=\{q \in N \mid d_g (q,p_1)<2\epsilon\} \subset e_1(B^{2n}(r_1 - \epsilon)) $. Fix a smooth function $\rho:N \to [0,1]$ such that $\rho  = 1$  on the set $B(p_1;\epsilon)$ and $\rho  = 0$ on the complement of $B(p_1;2\epsilon)$. Now, for any $0<\delta < \epsilon$ 
 we have that $U_\delta:=\mathcal{I}_{q_0}\cap B(p_1, \delta) \neq \emptyset$ by Proposition \ref{LebZero}. Fix any point $q^\delta_1 \in U_\delta$, such that $d_g(q_0, q^1_\delta) < d_g(q_0,p_1)$, and let $\gamma_\delta:[0,1] \to N$ be the unique length minimizing geodesic from $p_1$ to $q^\delta_1$. Such a geodesic exists since $q^\delta_1$ is in the normal neighborhood $B(p_1, \delta)$  of $p_1$. By Sard's theorem, we can assume that $q^\delta_1$ is such that $\phi^1_H(T_{q_0}^* N) \pitchfork T^*_{q^\delta_1} N$.
 
 Choose any vector field $X_\delta$ on $N$ which extends $\gamma_{\delta} '$ and such that $\|X_\delta \| \leq 2\delta$. Now, we define a Hamiltonian $G:T^*N \to \mathbb{R}$ by the following equation $G_\delta(p)=\rho(\pi(p)) p(X_\delta).$ For $\delta$ small enough, it is easy to see that $\phi^1_{G_\delta}$ satisfies
 \begin{itemize}
     \item $\phi^1_{G_\delta}(p_1) = q^\delta_1$ and $\phi^1_{G_\delta}(T^*_{p_1}N) = T^*_{q^\delta_1}N $,
     \item $\phi^1_{G_\delta}(N) = N$,
     \item $\phi^1_{G_\delta} (e_1(B^{2n}(r_1-\epsilon))) \subset e_1(B^{2n}(r_1))$.
 \end{itemize}
Then, Hamiltonian $G:= G_\delta$ satisfies the assumptions of the Corollary.
\end{proof}

\begin{remark}
It is crucial that the set $\mathcal{I}_{q_0}$ is both open and dense in order to have a non-empty intersection with regular values from $B(p_1, \delta)$ of the map $\pi: \phi_H^1(T_{q_0}^* N) \to N$  for any $\delta$. Here $\pi$ is the restriction of $\pi:T^*N \to N$ to $\phi^1_H(T_{q_0}^* N)$.
\end{remark}

We are in a position to prove the Proposition \ref{MainProp}. 
\begin{proof}[{{Proof of the Proposition \ref{MainProp}}}]
Fix a symplectic embedding 
$$e:B^{2n}(r_0)\sqcup B^{2n}(r_1) \to D^*{N} ,$$ 
relative to $N$ and such that $e^{-1}(D_{q_0}^*N \sqcup D_{q_1}^*N)=\{0\}\times B^{n}(r_0) \sqcup \{0\}\times B^{n}(r_1)$. Set $e_i:= e \vert_{B^{2n}(r_i)}$.  By replacing $e$ with $\phi_G^1 \circ e$, Corollary \ref{UniqueGeod}  implies that there is a unique length minimizing geodesic $\Bar{\gamma}$ from $q_0=e_0(0)$ to $q_1=e_1(0)$.  In Corollary \ref{UniqueGeod}, we have achieved  $\phi^1_H(T_{q_0}^* N) \pitchfork T^*_{q_1}N$ for Hamiltonian $H(p)=d_g(q_0, p_1) \|p\|^2_g /2$, where $p_1$ was the center of the initial embedding $e_1$. In Section \ref{Technical results} we could have worked with $H= C \|p\|_g^2$ instead of $H= d_g(q_0, q_1)/2 \|p\|_g^2$, where  $d_g(q_0, q_1)\leq 2C < d_g(q_0, q_1)(1+\delta)$,  since $q_1$ was chosen from arbitrarily small neighborhood of $p_1$  one can put $C=d_g(q_0, p_1)/2$. Because the properties of corresponding function $f_\epsilon$ do not change on the region we are interested we can actually assume that $C=d_g(q_0, q_1)/2$.  Let $J\in \mathcal{J}$ be such that
\begin{equation}\label{ACSonImageB}
    J\vert_{e_i(B^{2n}(r_i))} = {e_i}_* J_0.
\end{equation}
 Recall that $x_\epsilon$ is the unique Hamiltonian chord of $H_\epsilon(p)=f_\epsilon(\|p\|^2)$ which corresponds to the geodesic $\Bar{\gamma} = \pi \circ x_\epsilon$. From the Corollary \ref{ExistenceJhol} and Lemma \ref{MainEnergy} we have $u \in \mathcal{M}^{\mathcal{F}_{H_\epsilon}}(x_\epsilon, \Bar{\gamma})$ (Figure (\ref{J-hol})) such that
\begin{equation}\label{MainEnergyEq}
    E(u)\leq d_g(q_0, q_1) + \frac{\epsilon}{2}.
\end{equation}
Transversality of the moduli space $\mathcal{M}^{\mathcal{F}_{H_\epsilon}}(x_\epsilon, \Bar{\gamma})$ can be achieved for an almost complex structure which satisfies equation (\ref{ACSonImageB}) by perturbing $J$ outside of $D^*N$ (\cite[Lemma 3.4.4]{MS12}).
Since $H_\epsilon$ is constant on $D^*N$, map $u$ is $J$-holomorphic on $u^{-1}(D^*N) \subset Z_-$. The rest of the proof is analogous to the idea from \cite{BM14}. Let us set $\Bar{\Sigma}_i$ to be a connected component of $(i,0)$ in $u^{-1}(e_i(B^{2n}(r_i)))$, and define maps 
$$\Bar{v}_i:= e_i^{-1} \circ u : \Bar{\Sigma}_i \to B^{2n}(r_i).$$ 
Set
\begin{align*}
    M_i:&=\sup \{r >0 \mid \forall s \in [0,1] \quad u(rs,i) \in e_i(\{0\} \times B^{n}(r_i))\},\\
    a_i:&=\sup \{r >0 \mid \forall t \in [0,1] \quad u(i,|i - rt|) \in e_i(B^{n}(r_i)\times \{0\}),
\end{align*}
and $\partial_v \Bar{\Sigma}_i:= [0,M_i]\times \{i\}, \quad \partial_h \Bar{\Sigma}_i:=\{i\} \times[0,a_i].$ From the properties of $u$, we have that $\partial \Bar{\Sigma}_i =  \partial_v \Bar{\Sigma}_i \cup \partial_h \Bar{\Sigma}_i \cup C$ where $\Bar{v}_i (C) \subset \partial B^{2n}(r_i)$. Also $\Bar{v}_i(i,0)=0\in B^{2n}(r_i)$, and
\begin{align*}
    &\Bar{v}_i(\partial_h \Bar{\Sigma}_i) \subset   B^{n}(r_i) \times \{0\},& &{\Bar{v}_i(i,a_i) \in \partial B^{2n}(r_i)},\\
    &\Bar{v}_i(\partial_v \Bar{\Sigma}_i) \subset \{0\} \times B^{n}(r_i),& &{\Bar{v}_i(M_i,i) \in \partial B^{2n}(r_i)}.
\end{align*}
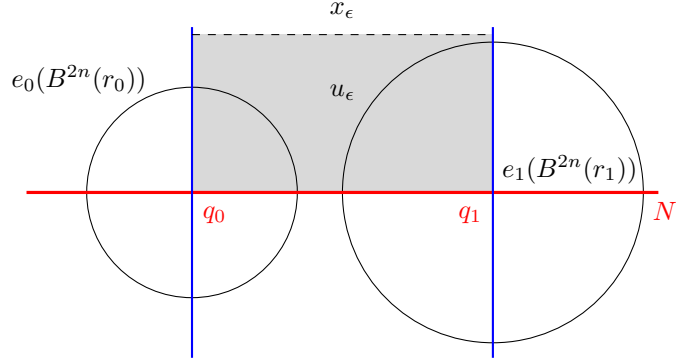
\begin{figure}[ht]
    \centering
    \begin{tikzpicture}
\filldraw[gray!30] (-2,0) rectangle (2,2.1);
\draw (-2,0) circle (1.4);
\draw (2,0) circle (2);
\node[red] at (-1.7,-0.3) {$q_0$};
\node[red] at (1.7,-0.3) {$q_1$};
\node at (-3.5, 1.5) {$e_0(B^{2n} (r_0))$};
\node at (2,0) [above right] {$e_1(B^{2n} (r_1))$};
\node[red] at (4,-0) [below right] {$N$};
\node at (0,2.2) [above] {$x_\epsilon$};
\node at (0,1.1) [above] {$u_\epsilon$};
\draw[red, very thick] (-4.2,0) -- (4.2,0);
\draw[blue, thick] (-2,-2.2) -- (-2,2.2);
\draw[blue, thick] (2,-2.2) -- (2,2.2);
\draw[dashed] (-2,2.1) -- (2,2.1);
\end{tikzpicture}
    \caption{Image of $u \in \mathcal{M}^{\mathcal{F}_{H_\epsilon}}(x_\epsilon, \Bar{\gamma})$.}
    \label{J-hol}
\end{figure}
We chose an almost complex structure $J$ to be pushforward of the standard complex structure $J_0$ on the images $e_i(B^{2n}(r_i))$. Hence, the maps $\Bar{v}_i$ are holomorphic. Since $\partial_h \Bar{\Sigma}_i$ is mapped to the real part of the ball $B^{2n}(r_i)$, and $\partial_v \Bar{\Sigma}_i$ is mapped to the imaginary part we can apply Schwartz reflection two times to get the maps $v_i:\Sigma_i \to B^{2n}(r_i).$ From the properties of $\Bar{v}_i$ we have that $v_i(\partial \Sigma_i) \subset \partial B^{2n}(r_i)$ and $v_i(0)=0$. Since $v_i$ is a holomorphic map, its image is a minimal surface. Applying monotonicity property of minimal surfaces to $v_i$ (\cite{AO75, MS12}) we get
\begin{equation}\label{Isoperimetric}
    \pi r_i^2 \leq Area(v_i)=E(v_i).
\end{equation}
Since $v_i$ is obtained from $\Bar{v}_i$ by applying Schwartz reflection twice, we have $E(v_i)=4E(\Bar{v_i})$. From the equations (\ref{MainEnergyEq}), (\ref{Isoperimetric}) and $E(v_i)=4E(\Bar{v_i})$  we get
\begin{equation}\label{Gr<diam}
     \pi r_0^2+\pi r_1^2 \leq E(v_0) + E(v_1) = 4 (E(\Bar{v}_0)+E(\Bar{v}_1))\leq 4 E(u) = 4 d_g(q_0, q_1) + 2\epsilon 
\end{equation} Here the second inequality holds since
\begin{align*}
    E(\Bar{v}_0)+E(\Bar{v}_1)& = \iint_{\Bar{\Sigma}_0} \Bar{v}_0^*\omega_{st} + \iint_{\Bar{\Sigma}_1} \Bar{v}_1^*\omega_{st}= \iint_{\Bar{\Sigma}_0 \cup \Bar{\Sigma}_1} u^*\omega \\
    &= \iint_{\Bar{\Sigma}_0 \cup \Bar{\Sigma}_1} \omega\left(\frac{\partial u}{\partial s}, J \frac{\partial u}{\partial s} \right) ds dt \leq \iint_{Z_-} \omega\left(\frac{\partial u}{\partial s}, J \frac{\partial u}{\partial s} \right) ds dt = E(u).
\end{align*}
 Since the inequality (\ref{Gr<diam}) holds for every $\epsilon>0$ we get
$\pi r_0^2+\pi r_1^2 \leq 4 d_g(q_0, q_1).$
\end{proof}
 \begin{remark}\label{packing in r-bundle}
 By a rescaling argument, one can show an analogous result for the $r$-disc cotangent bundle $D^*_r N = \{ p \in T^*N \mid \| p \|_g < r \}$. In other words, if we have embeddings $e_i:B^{2n}(r_i) \to D^*_r N$ with real and imaginary part constraints as in  Proposition \ref{MainProp} the bound is $\pi r_0^2 + \pi r_1^2 \leq 4 r d_g(q_0, q_1).$
 \end{remark}

\subsection{Proof of Theorem \ref{Main}}\label{ProofMain}
Now we properly define the function $\rho_\mathcal{W}$ from the introduction. Fix a closed Lagrangian submanifold $L$ of a symplectic manifold $(M,\omega)$ and let $\mathcal{W}_L$ be a Weinstein neighborhood of $L$, i.e. $\mathcal{W}_L$ is symplectomorphic to an open neighborhood $\mathcal{V}$ of the zero section $\mathcal{O}_L$ in $(T^*L, -d\lambda)$, sending $L$ to $\mathcal{O}_L$.  If $\mathcal{W}_L$ is a bounded\footnote{Meaning that $\overline{\mathcal{W}}_L$ is a compact subset of $M$} Weinstein neighborhood we define a distance-like function $\rho_{\mathcal{W}_L}: L \times L \rightarrow [0,\infty)$. 
\begin{definition} \label{MainDefinition}
If $q_0 \neq q_1$ then
\begin{equation*} 
    \rho_{\mathcal{W}_L}(q_0, q_1):= \sup \left\{ \frac{\pi r^2}{2}  \left. \begin{aligned} \text{}\\ \text{}\\ \text{}
 \end{aligned}\right\vert 
        \begin{aligned}
           &\exists e: B_0^{2n}(r) \sqcup B_1^{2n}(r) \rightarrow \mathcal{W}_L,  \quad e^* \omega = \omega_{st}, \\
           &e^{-1}(L) = B_0^{n}(r)  \times \{0\} \sqcup B_1^{n}(r)  \times \{0\},\quad\\
           &e^{-1}\left(\bigsqcup \mathcal{W}_{q_i}\right) = \bigsqcup \{0\} \times B_i^{n}(r),
        \end{aligned} \right\}. 
\end{equation*}
where $\mathcal{W}_{q}$ is the image of the fiber $T^*_q L\cap \mathcal{V}$.  We set $\rho_{\mathcal{W}_L}(q,q)=0$.
\end{definition}
In the following lemma, we construct explicit symplectic embeddings to Lagrangian bi-disc $P_L(a,b)$, which is needed to obtain a sufficiently nice lower bound for $\rho_{\mathcal{W}_L}$.
\begin{lemma}\label{Sharpness for bi-disc}
There exists a relative symplectic embedding $$e:B^{2n}\left(2\sqrt{\frac{ab}{\pi}}\right) \to P_L(a,b).$$
\end{lemma}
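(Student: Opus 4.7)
The plan is to write $e$ as the cotangent lift of a carefully chosen radial diffeomorphism of the base disc. Such a lift automatically preserves $\lambda_{st}$ (and hence is symplectic) and preserves the real/imaginary decomposition, so the only real content is choosing the radial profile so that each fiber is mapped into $D^n_p(b)$.

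Set $r:=2\sqrt{ab/\pi}$, so that $\pi r^{2}=4ab$. Define $H\colon[0,r)\to[0,a)$ by
\[
H(s)\;=\;\frac{1}{b}\int_{0}^{s}\sqrt{r^{2}-t^{2}}\,dt,
\]
which satisfies $H(0)=0$, $H'(s)=\sqrt{r^{2}-s^{2}}/b>0$, and $H(r^{-})=\pi r^{2}/(4b)=a$. Put $\Phi(q):=(H(\|q\|)/\|q\|)\,q$ for $q\neq 0$ and $\Phi(0):=0$. The Taylor expansion $H(s)=(r/b)s-s^{3}/(6rb)+O(s^{5})$ shows that $H(\|q\|)/\|q\|$ is a smooth function of $\|q\|^{2}$, so $\Phi\colon B^{n}(r)\to B^{n}(a)$ is a smooth radial diffeomorphism. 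Now define
\[
e(q,p)\;:=\;\bigl(\Phi(q),\,(D\Phi(q))^{-T}p\bigr).
\]
A direct index computation gives $e^{*}\lambda_{st}=\lambda_{st}$, so $e$ is symplectic and maps vertical fibers to vertical fibers. Since $\Phi$ is injective and $(D\Phi)^{-T}$ is invertible on each fiber, $e$ is an embedding; since $(D\Phi)^{-T}p=0\iff p=0$ and $\Phi(q)=0\iff q=0$, we have
\[
e^{-1}(L)=B^{n}(r)\times\{0\},\qquad e^{-1}(\{0\}\times D^{n}_{p}(b))=\{0\}\times B^{n}(r),
\]
so $e$ is relative to $L=D^{n}_{q}(a)$ and satisfies the imaginary-part condition as well.

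It remains to verify that $e(B^{2n}(r))\subset P_{L}(a,b)$. Clearly $\|\Phi(q)\|=H(\|q\|)<a$. For the fiber bound, note that $D\Phi$ acts by the eigenvalue $H'(\|q\|)=\sqrt{r^{2}-\|q\|^{2}}/b$ in the radial direction and by $H(\|q\|)/\|q\|$ on the orthogonal complement. By the mean value theorem $H(\|q\|)/\|q\|=H'(\xi)$ for some $\xi\in(0,\|q\|)$, and since $H'$ is strictly decreasing this gives $H(\|q\|)/\|q\|>H'(\|q\|)$. Hence $\sigma_{\min}(D\Phi)=H'(\|q\|)$ and
\[
\bigl\|(D\Phi(q))^{-T}\bigr\|_{\mathrm{op}}\;=\;\frac{b}{\sqrt{r^{2}-\|q\|^{2}}}.
\]
For any $(q,p)\in B^{2n}(r)$ one has $\|p\|<\sqrt{r^{2}-\|q\|^{2}}$, and so $\|(D\Phi(q))^{-T}p\|<b$, which is exactly the fiber condition defining $P_{L}(a,b)$.

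The only subtle step is the extension of $\Phi$ (and therefore of $e$) across the origin, which is handled by the expansion of $H(s)/s$ noted above; everything else is elementary bookkeeping, and the choice $H'(s)=\sqrt{r^{2}-s^{2}}/b$ is essentially forced by wanting $\|(D\Phi)^{-T}\|_{\mathrm{op}}\sqrt{r^{2}-\|q\|^{2}}=b$ together with $\Phi$ being a bijection onto $B^{n}(a)$.
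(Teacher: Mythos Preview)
Your proof is correct and follows essentially the same route as the paper: both use the cotangent lift $e(q,p)=(\Phi(q),(D\Phi(q))^{-T}p)$ of the radial map $\Phi(q)=\frac{H(\|q\|)}{\|q\|}q$ with $H'(s)=\sqrt{r^{2}-s^{2}}/b$, and both verify the fiber bound via the inequality $H(s)/s>H'(s)$. The only cosmetic difference is that the paper takes $b=1$ (so its $f$ is your $bH$), lands in $P_{L}(ab,1)$, and then invokes the symplectomorphism $P_{L}(ab,1)\cong P_{L}(a,b)$, whereas you absorb this rescaling into $H$ and map directly into $P_{L}(a,b)$; your eigenvalue computation of $\sigma_{\min}(D\Phi)$ is also a slightly cleaner packaging of the paper's explicit estimate on $\|D\varphi(q)h\|$.
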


\begin{proof}
Let us first find a symplectic embedding $e:B^2\left(2\sqrt{ab/\pi}\right) \to (-ab, ab) \times (-1, 1)$ which has the form $e(q,p) = \left(f(q), \frac{1}{f'(q)}p\right)$. Setting $$f(q)=\frac{2ab}{\pi}\mathrm{arcsin}\left(\sqrt{\frac{\pi}{4 ab}}q\right) + \frac{q}{2} \sqrt{\frac{4ab}{\pi} - q^2},$$
we get the desired symplectic embedding $e$. Indeed, $f$ is odd and increasing, and $f\left( 2\sqrt{\frac{ab}{\pi}}\right)=ab$. Also, $f'(q)=\sqrt{\frac{4ab}{\pi} - q^2}$, hence for $p^2 < \frac{4ab}{\pi} - q^2$ we have $\left|\frac{1}{f'(q)}p \right| < 1.$
For higher dimensions we set $\varphi(q):= \frac{f(\|q\|)}{\|q\|}q$. Since $f$ is odd and analytic, $\varphi$ is smooth. It is easy to see that
$$
D\varphi(q)h = \left(f'(\|q\|)-\frac{f(\|q\|)}{\|q\|}\right) \frac{\langle q , h \rangle}{\|q\|} \frac{q}{\|q\|} + \frac{f(\|q\|)}{\|q\|}h,
$$
and
\begin{equation}\label{LowerBoundonDFi}
     \|D\varphi(q)h\| \geq \frac{f(\|q\|)}{\|q\|} \|h\| - \left\vert f'(\|q\|)-\frac{f(\|q\|)}{\|q\|} \right\vert \frac{|\langle q , h \rangle|}{\|q\|} \geq f'(\|q\|) \|h\|.
\end{equation}
   The last inequality holds since $\frac{f(t)}{t} > f'(t)$ for $t>0$. Since $f'(t)>0$ for $t\in \left[0, 2 \sqrt{\frac{ab}{\pi}}\right)$ we get that $D\varphi(q)$ is invertible. Now define symplectic embedding $e:B^{2n}\left(2\sqrt{ab/\pi}\right) \to P_L(ab,1)$ as $e(q,p):=\left(\varphi(q), (D\varphi(q)^{-1})^T p\right).$ It follows from (\ref{LowerBoundonDFi}) that $Im(e) \subset P_L(ab,1)$.
This ends the proof since $P_L(a,b)$ is symplectomorphic to $P_L(ab,1)$. To see that such embedding $e$ satisfies the imaginary part condition, it is enough to note that $e^* \lambda_{st} = \lambda_{st}$ where $\lambda_{st} = \sum p_i dq_i$.
\end{proof}

Using Lemma \ref{Sharpness for bi-disc} and Proposition \ref{Main}, we can prove that $\rho_{\mathcal{W}_L}$ is equivalent to a distance $d_g$ coming from a Riemannian metric $g$ on $L$. 
\begin{proposition}
Let $g$ be some Riemannian metric on a closed manifold $L$. There are $C_i(\mathcal{W}_L,g)>0$ such that for any $q_0, q_1 \in L$ we have $C_1 d_g(q_0, q_1) \leq \rho_{\mathcal{W}_L}(q_0, q_1) \leq C_2 d_g(q_0, q_1).$
\end{proposition}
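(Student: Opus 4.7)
The plan is to prove the two inequalities separately, using Remark \ref{packing in r-bundle} (the rescaled form of Proposition \ref{MainProp}) for the upper bound and the explicit embedding from Lemma \ref{Sharpness for bi-disc} for the lower bound.

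For the upper bound, I would first observe that since $\mathcal{W}_L$ is a bounded Weinstein neighborhood, its image $\mathcal{V}\subset T^*L$ satisfies $\mathcal{V}\subset D^*_R L$ for some $R>0$ depending on $\mathcal{W}_L$ and $g$. Any pair of embeddings $e:B^{2n}(r)\sqcup B^{2n}(r)\to \mathcal{W}_L$ as in Definition \ref{MainDefinition}, transported to $D^*_R L$, still satisfies the imaginary-part condition, because $\mathcal{W}_{q_i}=T^*_{q_i}L\cap \mathcal{V}$ and the image lies in $\mathcal V$, which forces $e^{-1}(D^*_{R,q_i}L)=\{0\}\times B^n(r)$. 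Remark \ref{packing in r-bundle} then gives $2\pi r^2\leq 4R\,d_g(q_0,q_1)$, so $\rho_{\mathcal{W}_L}(q_0,q_1)=\pi r^2/2\leq R\,d_g(q_0,q_1)$, and I would take $C_2:=R$.

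For the lower bound, I would exploit the fact that the cotangent lift of any diffeomorphism is a symplectomorphism, applied to normal charts on $(L,g)$. Fix $\rho_0>0$ smaller than the injectivity radius of $(L,g)$. For each $q\in L$, the cotangent lift of $\exp_q:B^n(\rho_0)\to L$ identifies a neighborhood of $T^*_q L$ with $B^n(\rho_0)\times\mathbb{R}^n$, sending the origin to $q$, the horizontal disc to the geodesic ball at $q$, and the vertical fiber to $T^*_q L$. The pullback of $\mathcal V$ under this chart is an open neighborhood of the zero section $B^n(\rho_0)\times\{0\}$; by compactness of $L$ there exists $\delta_0>0$ such that the bi-disc $P_L(\rho_0,\delta_0)=B^n(\rho_0)\times B^n(\delta_0)$ lies in this pullback, uniformly in $q$.

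Given distinct $q_0,q_1$ with $d:=d_g(q_0,q_1)$ and small $\epsilon>0$, I would set $a:=\min\!\big((d-\epsilon)/2,\rho_0\big)$ and invoke Lemma \ref{Sharpness for bi-disc} to obtain a relative symplectic embedding $B^{2n}(2\sqrt{a\delta_0/\pi})\hookrightarrow P_L(a,\delta_0)$; its formula $(q,p)\mapsto(\varphi(q),(D\varphi(q)^{-1})^T p)$ automatically preserves the fiber over $0$, so the imaginary-part condition survives. Composing with the cotangent-lifted normal charts at $q_0$ and $q_1$ and with $\mathcal V\cong \mathcal{W}_L$ yields two embeddings into $\mathcal{W}_L$ whose base projections lie in disjoint geodesic balls of radius $a<d/2$. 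Hence $\rho_{\mathcal{W}_L}(q_0,q_1)\geq 2a\delta_0$. Letting $\epsilon\to 0$, for $d\leq 2\rho_0$ this yields $\rho_{\mathcal{W}_L}(q_0,q_1)\geq \delta_0\, d$, and for $d>2\rho_0$ it yields $\rho_{\mathcal{W}_L}(q_0,q_1)\geq 2\rho_0\delta_0\geq \frac{2\rho_0\delta_0}{\operatorname{diam}_g(L)}\,d$. Setting $C_1:=\min\!\big(\delta_0,\,2\rho_0\delta_0/\operatorname{diam}_g(L)\big)$ finishes the proof. The main technical point will be the uniform-in-$q$ control giving a single bi-disc $P_L(\rho_0,\delta_0)$ that fits inside the pullback of $\mathcal V$ at every point; this is exactly where compactness of $L$ is used, and it is also why one needs the explicit, fiber-preserving embedding of Lemma \ref{Sharpness for bi-disc} rather than an abstract ball-packing result.
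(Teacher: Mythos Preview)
Your proposal is correct and follows essentially the same route as the paper: the upper bound via $\mathcal{V}\subset D^*_R L$ and Remark~\ref{packing in r-bundle}, and the lower bound via cotangent lifts of exponential charts combined with Lemma~\ref{Sharpness for bi-disc}, split into the cases of small and large $d_g(q_0,q_1)$. The only cosmetic difference is that the paper makes your $\delta_0$ explicit as $A\,r_{\min}$, where $r_{\min}=\sup\{r:D^*_rL\subset\mathcal{W}_L\}$ and $A=\min_{q}\min_{\|p\|\le\rho_{inj}/2}\|(D\exp_q(p)^{-1})^T\|^{-1}$, whereas you obtain it by a compactness argument.
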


\begin{proof}
Set $r_{max}:= \inf \{ r \mid \mathcal{W}_L \subset D^*_r L\}$ and $r_{min}:= \sup \{ r \mid  D^*_r L \subset  \mathcal{W}_L \}$.
It follows from Remark \ref{packing in r-bundle} that $\rho_{\mathcal{W}_L} \leq r_{max} d_g,$
so we can set $C_2:= r_{max}$. For the lower bound, let $\rho_{inj}$ be the injectivity radius of Riemannian metric $g$, and set
$$
A:= \min_{q \in N} \min_{\|p\| \leq \frac{\rho_{inj}}{2}}  \frac{1}{\|(D exp_q (p) ^{-1})^T \|}.
$$
Since $exp$ is a radial isometry, and it is a diffeomorphism for $\|p \| \leq \frac{\rho_{inj}}{2}$, from the compactness of $L$ we get $0< A \leq 1.$

First, in the case when $d_g(q_0, q_1) \leq \rho_{inj}$, we can explicitly construct symplectic embeddings $$\psi_i:P_L\left(d_g(q_0, q_1)/2,A r_{min}\right) \to D^*_{r_{min}}L,$$ such that $\psi_i(0,0)=q_i$ and $\psi^*_i \lambda = \lambda_{st}$. Set $\varphi_i(q):=exp_{q_i}(q)$ and $\psi_i(q,p):=(\varphi_i(q), (d\varphi_i(q)^{-1})^* p).$ 
Here we used unitary linear identification $T_{q_i}L \times T_{q_i}^* L \cong \mathbb{R}^{2n}$. The constant $A$ is chosen in a way that image of $\psi_i$ remains inside $r_{min}$ disc-cotangent bundle $D^*_{r_{min}} L$. It is easy to see that $\psi_i^*\lambda = \lambda_{st}$ and $\psi_i(0,0)=q_i$. From Lemma \ref{Sharpness for bi-disc}, we have a symplectic embedding $$e:B^{2n}(r) \to P_L\left(d_g(q_0, q_1)/2,A r_{min}\right),$$ where $r$ is such that $\pi r^2 = 2 A r_{min} d_g(q_0,q_1)$. Looking at the compositions $\psi_i \circ e$ we get two symplectic embeddings of the ball of capacity $2 A r_{min} d_g(q_0,q_1)$, centered at $q_0$ and $q_1$, satisfying constraints on the real and imaginary parts. Hence we have $$ \rho_\mathcal{W}(q_0, q_1) \geq A r_{min} d_g(q_0,q_1).$$
Now, if $d_g(q_0,q_1) > \rho_{inj}$, we know that $\rho_\mathcal{W}(q_0,q_1) \geq A r_{min} \rho_{inj}$. By a simple estimate, we get
$$
\rho_\mathcal{W}(q_0,q_1) \geq A r_{min} \rho_{inj} \frac{diam (L)}{diam (L)} \geq \frac{A r_{min}\rho_{inj}} {diam(L)} d_g(q_0, q_1),
$$
since $\frac{A r_{min}\rho_{inj}} {diam(L)} \leq A r_{min} $ we can set $C_1:= \frac{A r_{min}\rho_{inj}} {diam(L)}.$
\end{proof}

Before we start proving the main theorem, we need one more technical lemma. This lemma gives a better lower bound for $\rho_\mathcal{W}(q_0,q_1)$ in the case $\mathcal{W}=D^*N$ and when points $q_0, q_1$ are close enough.

\begin{lemma}\label{Local distance}
There exists $\delta_0 > 0$ such that for all $q \in N$ and all $d < \delta_0$ we have a symplectic embedding
$$
\psi:P_L\left(d, \sqrt{\frac{1}{1+d}}\right) \to D^*N,
$$
such that $\psi(0,0)=q$ and $\psi^* \lambda = \lambda_{st}$.
\end{lemma}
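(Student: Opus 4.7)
The plan is to take $\psi$ to be the cotangent lift of the exponential map at $q$, following the same pattern as in the proof of Proposition \ref{Equivalence of metrics}, but exploiting finer information about how close $\exp_q$ is to an isometry near the origin. Fix an orthonormal basis of $(T_qN, g_q)$ to identify $T_qN \cong \mathbb{R}^n$ as inner product spaces, and for $\|v\|$ less than the injectivity radius $\rho_{\mathrm{inj}}$ of $(N,g)$, set $\varphi(v) := \exp_q(v)$ and
$$\psi(v, p) := \bigl(\varphi(v),\, (d\varphi(v)^{-1})^T p\bigr).$$
As a cotangent lift of a diffeomorphism, $\psi$ automatically satisfies $\psi^*\lambda = \lambda_{st}$, and $\psi(0,0) = q$ by construction. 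The entire content of the lemma is therefore that $\psi$ maps $P_L(d, \sqrt{1/(1+d)})$ into the open unit disc bundle $D^*N$ for $d < \delta_0$ small enough.

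To verify this, I would work in normal coordinates at $q$. In such coordinates $\exp_q$ becomes the identity and $d\exp_q(v) = I$, so $\psi$ simply identifies $(v,p) \in \mathbb{R}^{2n}$ with the covector $p$ at the point whose normal coordinates are $v$. The condition $\psi(v,p) \in D^*N$ then amounts to $\|p\|_{g(v)} < 1$, where $g(v)$ is the metric tensor evaluated in normal coordinates at $v$. The classical Taylor expansion of the metric in normal coordinates (with $g_{ij}(0) = \delta_{ij}$, $\partial_k g_{ij}(0) = 0$, and the second derivatives involving the curvature at $q$) gives a pointwise bound $|g^{ij}(v) - \delta^{ij}| \leq K\|v\|^2$ for $\|v\|$ small, and hence an estimate of the form
$$\|p\|_{g(v)}^2 \leq (1 + C\|v\|^2)\|p\|^2,$$
with $C$ controlled by the curvature tensor near $q$.

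The main obstacle is upgrading this local estimate to one uniform in the base point $q$. This is where closedness of $N$ is essential: by compactness and smooth dependence of normal coordinates on the base point, one can choose $C > 0$ and $r_0 > 0$ (depending only on $(N,g)$) such that the estimate $\|p\|_{g(v)}^2 \leq (1+C\|v\|^2)\|p\|^2$ holds whenever $\|v\| \leq r_0$, in normal coordinates at any $q \in N$.

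Once the uniform bound is in place, the proof concludes by direct computation: for $\|v\| < d$ and $\|p\|^2 < 1/(1+d)$,
$$\|p\|_{g(v)}^2 < \frac{1 + C d^2}{1 + d} \leq 1$$
provided $Cd \leq 1$. Setting $\delta_0 := \min\{1/C,\, r_0,\, \rho_{\mathrm{inj}}\}$ then yields the desired embedding for every $d < \delta_0$.
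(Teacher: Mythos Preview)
Your proposal is correct and follows essentially the same route as the paper: both take $\psi$ to be the cotangent lift of $\exp_q$, use the Taylor expansion $g_{ij} = \delta_{ij} + O(r^2)$ in normal coordinates together with compactness of $N$ for uniformity, and finish with a direct estimate showing the image lands in $D^*N$. The only cosmetic difference is that the paper records the weaker linear bound $\|G^{-1}(v) - I\| \leq \|v\|$ (which exactly matches the factor $1/(1+d)$), whereas you keep the quadratic bound $C\|v\|^2$ and then impose $Cd \leq 1$; both arrive at the same conclusion.
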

\begin{proof}
From \cite[Lemma 5.5.7.]{Pe16} we have that in exponential coordinates around any $q_0\in N$ coeficients of $g$ satisfy $g_{ij}=\delta_{ij} + O(r^2)$ where $r$ is distance from $q_0$. From compactness of $N$ one can show that there is $\delta_0>0$ such that for any $q_0 \in N$, for $q \in B(q_0,\delta_0)$ we have $\|G^{-1}(q) - I\| \leq \|q\|,$ where $G$ is matrix associated to coordinate components $g_{ij}$ of metric $g$ and $I$ is identity matrix. Our map $\psi:P_L\left(d, \sqrt{\frac{1}{1+d}}\right) \to T^*N$ is of the form $\psi(q,p)=(\varphi(q), (d\varphi(q)^{-1})^* p),$
where $\varphi(q)=exp_{q_0}(q)$. Take $\|p\| \leq \sqrt{\frac{1}{1+d}}$ and $\|q\|\leq d < \delta_0$, we have
$$
\|(d\varphi(q)^{-1})^* p\|^2 = p^T G^{-1}(q) p =  |p^T(G^{-1}-I)p + p^Tp| \leq \|p\|^2(\|q\| + 1)<1,
$$
this estimate proves that $Im(\psi) \subset D^*N$.

\end{proof}

\begin{proof}[{{Proof of the Theorem \ref{Main}}}]
Let $L:=N$ be the zero section in $T^*N$ and $\mathcal{W}=D^*N$ the unit-disc bundle. We say that partition  $\mathcal{P}$ is admissible if $d_g(\gamma(t_i), \gamma(t_{i+1}) )<\delta_0$,  for all $i \in \{ 1,...,k\}$,
where $\delta_0$ is from Lemma \ref{Local distance}. Set $\gamma_i:=\gamma(t_i)$. It is easy to see that
$$
L_{\rho_\mathcal{W}}(\gamma)= \sup \left\{ \sum_{1\leq i \leq k}\rho_\mathcal{W}(\gamma_i , \gamma_{i+1} ) \mid \mathcal{P} \text{ is admissible }\right\}.
$$
By the choice of $\delta_0$, it follows from Lemmas \ref{Sharpness for bi-disc} and \ref{Local distance} that
$$
d_g(\gamma_i , \gamma_{i+1} ) \sqrt{\frac{1}{1+d_g(\gamma_i , \gamma_{i+1} )}} \leq \rho_\mathcal{W}(\gamma_i , \gamma_{i+1}  ).
$$
On the other hand, from Proposition \ref{MainProp}, we have $\rho_\mathcal{W}(\gamma_i , \gamma_{i+1}  ) \leq d_g(\gamma_i , \gamma_{i+1} ).$ When we take a sum over all $i$ we get
\begin{equation}\label{Partial sum}
    \sum d_g(\gamma_i , \gamma_{i+1} ) \sqrt{\frac{1}{1+d_g(\gamma_i , \gamma_{i+1} )}} \leq \sum \rho_\mathcal{W}(\gamma_i , \gamma_{i+1} ) ) \leq \sum  d_g(\gamma_i , \gamma_{i+1} ).
\end{equation}
It is a standard fact from Riemannian geometry that $$\int_a^b \|\gamma'(t)\|_g dt = \lim_{\lambda(\mathcal{P}) \to 0} \sum  d_g(\gamma_i , \gamma_{i+1} ),$$ where $\lambda(\mathcal{P}) = \max\limits_i (t_{i+1} - t_i)$. Since $\gamma$ is uniformly continuous, we have that $\lambda(\mathcal{P}) \to 0$ implies $\delta(\mathcal{P}):=\max_i d_g(\gamma_i, \gamma_{i+1}) \to 0.$
Inserting $\delta(\mathcal{P}) \geq d_g(\gamma_i, \gamma_{i+1})$ in the equation (\ref{Partial sum}) we get $$\sqrt{\frac{1}{1+\delta(\mathcal{P})}}\sum d_g(\gamma_i , \gamma_{i+1} )  \leq \sum \rho_\mathcal{W}(\gamma_i , \gamma_{i+1} ) ) \leq \sum  d_g(\gamma_i , \gamma_{i+1} ).$$
Now, letting $\lambda(\mathcal{P}) \to 0$ we get from the standard squeeze theorem for limits that 
$$L_{\rho_{\mathcal{W}}}(\gamma) = \int_a^b \|\gamma'(t)\|_g dt.$$
\end{proof}

\end{document}